\renewcommand*{\backref}[1]{}
\renewcommand*{\backrefalt}[4]{\quad \tiny 
  \ifcase #1 (\textbf{NOT CITED.})%
  \or    (Cited on page~#2.)%
  \else   (Cited on pages~#2.)%
  \fi}
\declaretheoremstyle[
headfont=\small\itshape,
bodyfont=\small
]{myremark}
\declaretheorem{theorem}
\declaretheorem[numberwithin=section]{otherthm}
\declaretheorem[sibling=otherthm]{lemma}
\declaretheorem[sibling=otherthm]{corollary}
\declaretheorem[sibling=otherthm]{proposition}
\declaretheorem[sibling=otherthm,style=remark]{conjecture}
\declaretheorem[style=remark]{question}
\declaretheorem[name=Acknowledgements, style=remark, numbered=no]{ack}
\numberwithin{equation}{section}     
\setlist[enumerate,1]{label={\upshape(\alph*)},ref=\alph*}
\setlist[enumerate,2]{label={\upshape(\arabic*)},ref=\arabic*}
\newcommand{\WEDGE}{\mathsf{\Lambda}}  
\newcommand{\iii}{\mathtt{i}}
\newcommand{\jjj}{\mathtt{j}}
\newcommand{\kkk}{\mathtt{k}}
\newcommand{\tribar}[1]{\mathopen{| {\kern -1.5pt} | {\kern -1.5pt} |} {#1} \mathclose{| {\kern -1.5pt} | {\kern -1.5pt} |}}
\newcommand{\bigtribar}[1]{\mathopen{\big|{\kern -1.5pt}\big|{\kern -1.5pt}\big|}{#1}\mathclose{\big|{\kern -1.5pt}\big|{\kern -1.5pt}\big|}}
\newcommand{\Bigtribar}[1]{\mathopen{\Big|{\kern -1.5pt}\Big|{\kern -1.5pt}\Big|}{#1}\mathclose{\Big|{\kern -1.5pt}\Big|{\kern -1.5pt}\Big|}}
\newcommand{\biggtribar}[1]{\mathopen{\bigg|{\kern -1.5pt}\bigg|{\kern -1.5pt}\bigg|}{#1}\mathclose{\bigg|{\kern -1.5pt}\bigg|{\kern -1.5pt}\bigg|}}
\newcommand{\biangle}[1]{\mathopen{\langle {\kern -1.5pt} \langle}{#1} \mathclose{\rangle {\kern -1.5pt} \rangle}}
\renewcommand{\epsilon}{\varepsilon}
\renewcommand{\setminus}{\smallsetminus}
\renewcommand{\emptyset}{\varnothing}
\begin{document}

\title[Equilibrium states of singular value potentials]{Equilibrium states of generalised singular value potentials and applications to affine iterated function systems}

\author[J.~Bochi]{Jairo Bochi$^1$}
\address{Facultad de Matem\'aticas\\ 
	Pontificia Universidad Cat\'olica de Chile\\
	Avenida Vicu\~na Mackenna 4860\\
	Santiago, Chile}
\email{\href{mailto:jairo.bochi@mat.uc.cl}{jairo.bochi@mat.uc.cl}}

\author[I.D.~Morris]{Ian D. Morris$^2$}
\address{Department of Mathematics\\
	University of Surrey\\
	Guildford GU2 7XH\\
	United Kingdom}
\email{\href{mailto:i.morris@surrey.ac.uk}{i.morris@surrey.ac.uk}}

\begin{thanks}
{{}$^1$ Partially supported by projects Fondecyt 1140202, 1180371, and Conicyt PIA ACT172001.
{}$^2$ Partially supported by the Leverhulme Trust (grant number RPG-2016-194).}
\end{thanks}

\begin{abstract}
We completely describe the equilibrium states of a class of potentials over the full shift which includes Falconer's singular value function for affine iterated function systems with invertible affinities. We show that the number of distinct ergodic equilibrium states of such a potential is bounded by a number depending only on the dimension, answering a question of A. K\"aenm\"aki. We prove that all such equilibrium states are fully supported and satisfy a Gibbs inequality with respect to a suitable subadditive potential.  We apply these results to demonstrate that the affinity dimension of an iterated function system with invertible affinities is always strictly reduced when any one of the maps is removed, resolving a folklore open problem in the dimension theory of self-affine fractals. We deduce a natural criterion under which the Hausdorff dimension of the attractor has the same strict reduction property.\end{abstract}
\date{\today}

\maketitle

\section{Introduction and context}
If $T_1,\ldots,T_N \colon \mathbb{R}^d\to\mathbb{R}^d$ are contractions it is well-known that there exists a unique nonempty compact set $X\subset \mathbb{R}^d$ which solves the equation $X=\bigcup_{i=1}^NT_iX$. In this article we will refer to the tuple $(T_1,\ldots,T_N)$ as an \emph{iterated function system} and the set $X$ as its \emph{attractor}. When the transformations $T_i$ are all similarity transformations the set $X$ is called \emph{self-similar}; when they are assumed only to be affine transformations $X$ is called \emph{self-affine}. 
Subject to suitable hypotheses which guarantee that the distinct images $T_iX$ do not substantially overlap, the dimension theory of self-similar sets has been well understood since the work of J.E. Hutchinson in 1981 \cite{Hu81}. (Investigation of the overlapping case remains an active and challenging research topic: see e.g. \cite{FrHeOlRo15,Ho14,Ho15,ShSo16}.) The dimension theory of self-affine sets, by contrast, has been a source of stubborn open problems since its initiation in the 1980s by Bedford \cite{Be84}, McMullen \cite{Mc84} and Falconer \cite{Fa88}, and affine iterated function systems remain the focus of substantial research interest (see e.g. \cite{Ba07,BaKa15,BaRa17,BaFe13,DaSi16,FeSh14,Fr16,JoPoSi07,Mo16,MoSh16,Ra17}). A persistent feature of the literature on affine iterated function systems 
has been the requirement for additional hypotheses on the linear parts of the affinities in order to obtain results: they may be required to be positive or dominated \cite{Ba15,BaRa17,FaKe15,FaKe16,HuLa95}, to be of low dimension \cite{Ba07,Ba15,BaRa17,Be84,FaKe16,Fr12,HuLa95,KaMo16,Mc84,MoSh16}, to have a simple algebraic structure \cite{Ba07,Be84,DaSi16,FaMi07,Fr12,Mc84} or to induce invariant measures on projective space which themselves satisfy suitable dimension hypotheses \cite{Ba15,FaKe16,MoSh16,Ra17}. In this article we contribute to the still very small literature of results on affine iterated function systems which require no hypotheses whatsoever on the affinities other than that they be contracting and invertible. 

We recall that an iterated function system $(T_1,\ldots,T_N)$ satisfies the \emph{open set condition} if there exists a nonempty open set $U \subseteq \mathbb{R}^d$ such that $T_iU\subseteq U$ for every $i=1,\ldots,N$ and $T_iU \cap T_jU=\emptyset$ when $i \neq j$. If $T_1,\ldots,T_N$ are similarities satisfying the open set condition with $T_i$ having contraction ratio $r_i \in (0,1)$, a well-known theorem of Hutchinson \cite{Hu81} asserts that the Hausdorff dimension $s$ of the attractor satisfies the equation $\sum_{i=1}^N r_i^s=1$. Let us note three trivial consequences of this formula: firstly, if the maps $T_i$ are perturbed within this class then the value of the Hausdorff dimension predicted by the formula varies continuously with the perturbation; secondly, the value of $s$ may clearly be computed to within any prescribed accuracy in a finite amount of time when the contraction ratios $r_i$ are known; thirdly, if one of the maps $T_i$ is deleted then the value of the dimension predicted by the formula is strictly decreased. The extent of the difficulties presented by affine iterated function systems may perhaps be appreciated by observing that in the affine context an analogue of the first property was not established until 2014 by D.-J. Feng and P. Shmerkin \cite{FeSh14} and an analogue of the second property was unknown until established by the second named author in the recent article \cite{Mo16}. Prior to the present article the third property was known in the affine context only in dimensions three and lower \cite{KaMo16} or when the affinity dimension (defined below) is a rational number \cite{KaLi17}. As a corollary of the main result of this article we will establish the third of these three properties unconditionally for invertible affine iterated function systems of arbitrary dimension.

Let us describe the appropriate generalisation of Hutchinson's formula to affine iterated function systems. When $T_i$ is a similarity transformation all of the essential information about the $s$-dimensional volume of the image of the unit ball is captured by its contraction ratio $r_i$, but when $T_i$ is an affine transformation more detailed information is required. If $A$ is a linear transformation of $\mathbb{R}^d$ we recall that the \emph{singular values} of $A$ are defined to be the non-negative square roots of the eigenvalues of the positive semidefinite linear map $A^\top A$. We will write the singular values as $\alpha_1(A) \geq \alpha_2(A) \geq \cdots \geq \alpha_d(A)$, allowing repetition in the case of multiple eigenvalues. The existence of the singular value decomposition of $A$ implies that the image of the unit ball under $A$ is an ellipsoid with the lengths of the semiaxes equal to the singular values of $A$. 
Given a real number $s>0$ and linear transformation $A$ of $\mathbb{R}^d$ we define the \emph{singular value function} $\varphi^s(A)$ by
\[\varphi^s(A)\coloneqq \left\{\begin{array}{cl}
\alpha_1(A)\cdots \alpha_{\lfloor s\rfloor}(A)\alpha_{\lceil s\rceil }(A)^{s-\lfloor s\rfloor} &0\leq s \leq d\\
|\det A|^{s/d}& s \geq d. \end{array}\right.\]
It is well-known that $\varphi^s(AB) \leq \varphi^s(A)\varphi^s(B)$ for all linear transformations $A,B$ of $\mathbb{R}^d$. If $(T_1,\ldots,T_N)$ is an iterated function system on $\mathbb{R}^d$ defined by $T_ix=A_ix+v_i$ for linear maps $A_i$ and vectors $v_i \in \mathbb{R}^d$ we define the \emph{pressure} $P(\varphi^s)$ of $(T_1,\ldots,T_n)$ to be the limit
\[P(\varphi^s)=P((A_1,\ldots,A_N),s):=\lim_{n \to \infty} \frac{1}{n}\log \sum_{i_1,\ldots,i_n=1}^N \varphi^s\left(A_{i_n}\cdots A_{i_1}\right),\]
a quantity introduced by Falconer in \cite{Fa88}. The existence of the limit is guaranteed by subadditivity. For fixed invertible contractions $(T_1,\ldots,T_N)$ the function $s \mapsto P(\varphi^s)$ is continuous and strictly decreasing and has a unique zero, which we call the  \emph{affinity dimension}  of $(T_1,\ldots,T_N)$ and denote by $\dim_{\mathrm{aff}}(T_1,\ldots,T_N)$.  When all of the transformations $T_i$ are similarities with respective contraction ratios $r_i$ the equation $P(\varphi^s)=0$ simplifies to Hutchinson's formula $\sum_{i=1}^Nr_i^s=1$.

By contrast to the case of self-similar sets, the problem of finding sufficient conditions for the Hausdorff dimension of a self-affine set to equal the affinity dimension of the defining iterated function system is one of notorious difficulty. It was shown by Falconer \cite{Fa88} that the affinity dimension is always an upper bound for the Hausdorff dimension of the attractor, but the problem of finding explicit general criteria for the affinity dimension to also be a lower bound for the Hausdorff dimension remains challenging. As is typically the case in dimension theory, attention has focused on the construction of measures on the attractor whose Hausdorff dimension approximates or equals the desired lower bound.

Let us describe a key mechanism by which such measures of maximal dimension might be found. Given $(T_1,\ldots,T_N)$ with $T_ix=A_ix+v_i$ let $X$ denote the attractor of $(T_1,\ldots,T_N)$, let $\Sigma_N\coloneqq \{1,\ldots,N\}^{\mathbb{N}}$ and let $\sigma \colon \Sigma_N \to \Sigma_N$ denote the shift transformation $\sigma[(x_m)_{m=1}^\infty]\coloneqq (x_{m+1})_{m=1}^\infty$. We equip $\Sigma_N$ with the infinite product topology, which respect to which $\Sigma_N$ is compact and metrisable and $\sigma$ is continuous. Let $\mathcal{M}_\sigma$ denote the set of all $\sigma$-invariant Borel probability measures on $\Sigma_N$.  Given any $(x_m)_{m=1}^\infty \in \Sigma_N$ and $v \in \mathbb{R}^d$ there exists a unique limit
\[\pi\left[(x_m)_{m=1}^\infty\right] \coloneqq  \lim_{n \to \infty} T_{x_1}T_{x_2}\cdots T_{x_n}v \in X\]
which is independent of the choice of $v \in \mathbb{R}^d$. If $\mu$ is a fixed $\sigma$-invariant Borel probability measure on $\Sigma_N$ then the function
\[s \mapsto h(\mu)+ \lim_{n \to \infty} \frac{1}{n}\log \int_{\Sigma_N} \varphi^s(A_{x_n}\cdots A_{x_1})d\mu[(x_m)_{m=1}^\infty]\]
is well-defined, continuous and strictly decreasing and has a unique zero which we call the \emph{Lyapunov dimension} of $\mu$, denoted $\dim_{\mathrm{Lyap}}\mu$. (Here $h(\mu)$ denotes the Kolmogorov-Sinai entropy of $\mu$ with respect to $\sigma$.)  One may show without difficulty that
\[\dim_H\pi_*\mu \leq \dim_{\mathrm{Lyap}}\mu\leq \dim_{\mathrm{aff}}(T_1,\ldots,T_N).\]
If therefore one wishes to contruct a measure on $X$ with Hausdorff dimension equal to $\dim_{\mathrm{aff}}(T_1,\ldots,T_N)$ by projecting a $\sigma$-invariant measure $\mu$ from $\Sigma_N$ onto $X$, one must necessarily choose the measure $\mu$ in such a way that its Lyapunov dimension is equal to the affinity dimension of $(T_1,\ldots,T_N)$. Such measures are precisely those elements of $\mathcal{M}_\sigma$ which maximise the quantity
\begin{equation}\label{eq:var}h(\mu)+ \lim_{n \to \infty} \frac{1}{n}\log \int_{\Sigma_N} \varphi^s(A_{x_n}\cdots A_{x_1})d\mu\left[(x_m)_{m=1}^\infty\right]\end{equation}
where $s\coloneqq \dim_{\mathrm{aff}}(T_1,\ldots,T_N)$. In order to construct measures on self-affine sets whose Hausdorff dimension realises the affinity dimension, then, it is desirable to be able to describe and characterise precisely those measures which maximise \eqref{eq:var}. We refer to such measures as \emph{equilibrium states of $(A_1,\ldots,A_N)$ with respect to $\varphi^s$}. When the transformations  $T_i$ are similarities there is a unique equilibrium state, it is a Bernoulli measure, and under the open set condition on the maps $T_i$ it projects to a measure with Hausdorff dimension equal to that of the attractor \cite{Hu81}. For general affine contractions the existence of at least one equilibrium state follows from abstract considerations involving the semicontinuity of the functional \eqref{eq:var} on $\mathcal{M}_\sigma$ (see \cite{Ka04b}) but the structure and properties of equilibrium states have largely remained elusive, especially in higher dimensions and when the matrices $A_i$ are not positive or dominated. 

\begin{figure}%
    \centering
    \subfloat[The classical Sierpi\'nski gasket $X_1$.]{{\includegraphics[width=5.8cm]{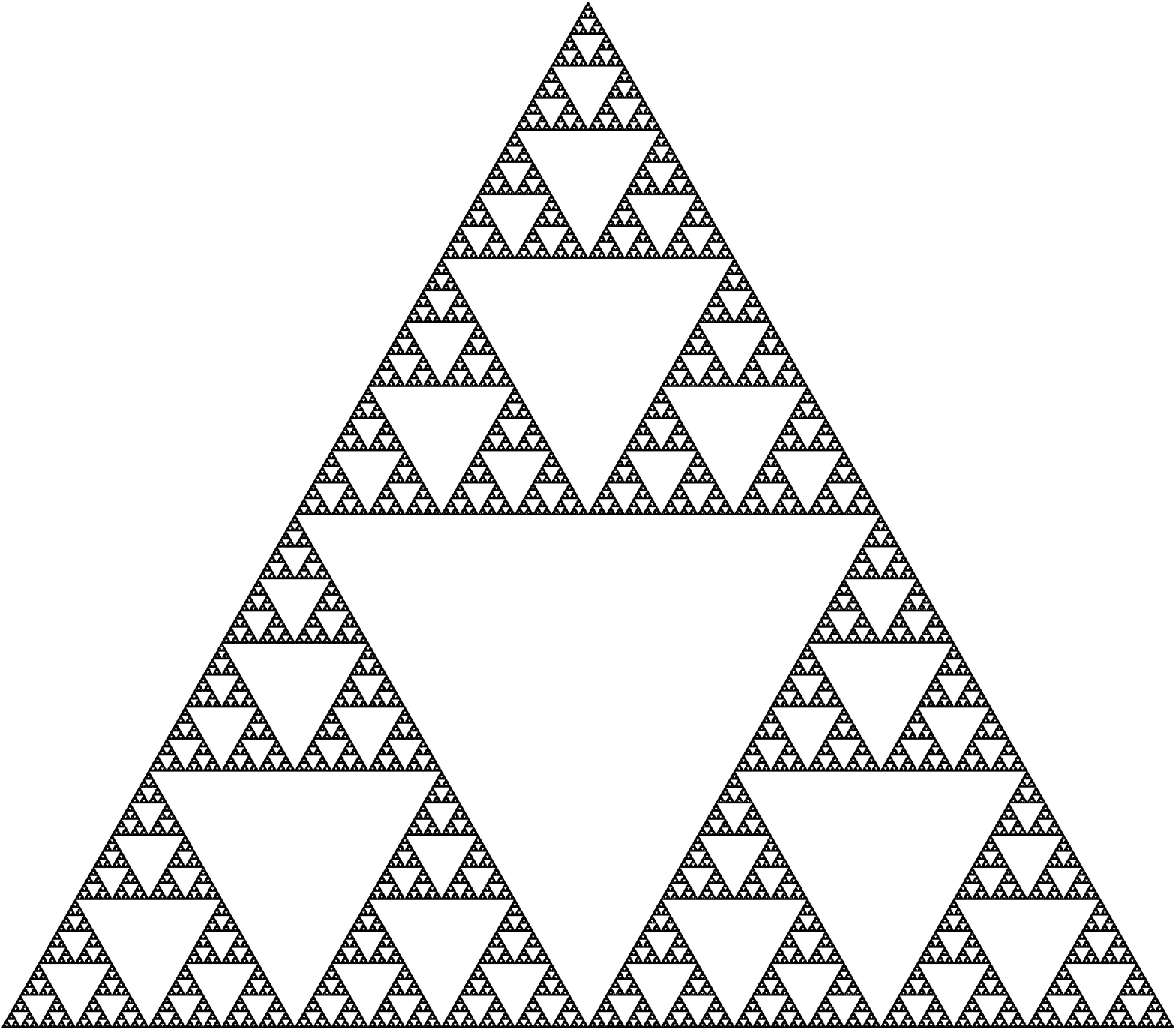}}}
    \qquad
    \subfloat[A self-affine gasket $X_2$ which is not self-similar.]{{\includegraphics[width=5.8cm]{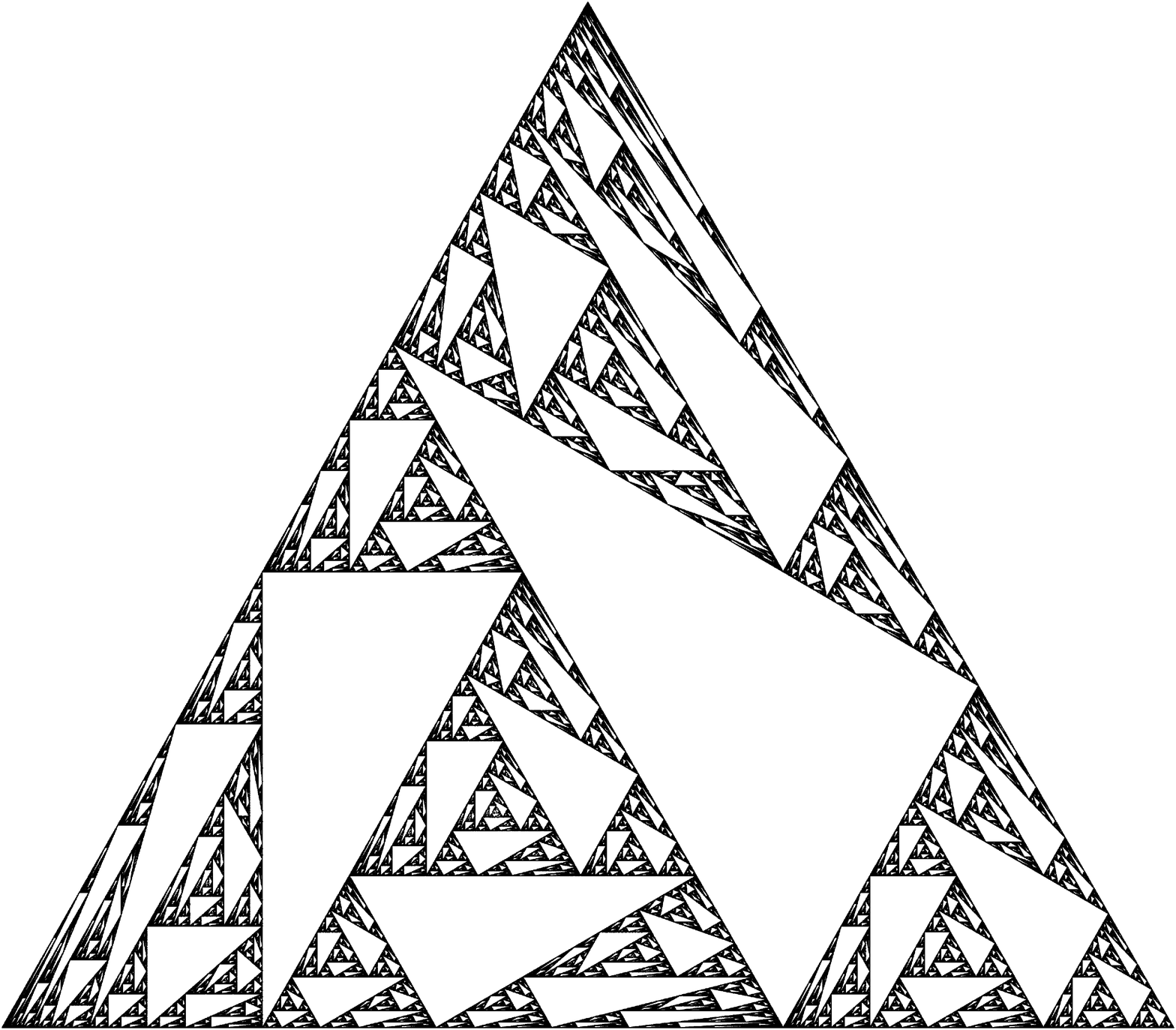}}}
    \caption{A measure of maximal Hausdorff dimension can be constructed on the classical Sierpi\'nski gasket $X_1$ by the simple expedient of giving measure $\frac{1}{3}$ to each of the three copies of $X_1$ with diameter half that of the original, measure $\frac{1}{9}$ to each of the nine sub-copies with diameter $\frac{1}{4}$ that of the original, and so forth. A self-affine gasket which is not self-similar will typically be much less homogenous, and sub-images of the same recursive depth may have very different shapes and sizes. The optimal allocation of measure to the different parts of $X_2$ is believed to be that given by the solution of the variational problem \eqref{eq:var}.}
    \label{fi:onlyfigure}%
\end{figure}

In recent years a number of sufficient conditions have been given for the Hausdorff dimension of a self-affine set to equal the affinity dimension of its defining iterated function system, and the investigation of the equilibrium states of $\varphi^s$ has typically played a critical r\^ole in these works \cite{Ba15,BaKa15,BaRa17,FaKe16,HuLa95,MoSh16}. The equilibrium states of $\varphi^s$ currently being poorly understood in general, a common feature of research which applies these measures to obtain dimension results has been the imposition of conditions on $A_1,\ldots,A_N$ in order to guarantee that the equilibrium states are unique and admit explicit estimates or descriptions \cite{BaKa15,BaKaKo17,BaRa17,FaKe16,HuLa95}. Our objective in this article is to obtain the first complete description and characterisation of the equilibrium states of $\varphi^s$ for invertible affinities in arbitrary dimensions without any assumptions on the matrices $A_i$. We in particular prove:
\begin{theorem}\label{th:intro2}
Let $A_1,\ldots,A_N \in GL_d(\mathbb{R})$ and let $s>0$. Then the number of ergodic equilibrium states of $\varphi^s$ is exactly one if $s \geq d$, is bounded by ${d \choose s}$ if $s$ is an integer and is bounded by ${d \choose \lfloor s\rfloor}{d\choose \lceil s \rceil}$ otherwise. In all cases all equilibrium states are fully supported. \end{theorem}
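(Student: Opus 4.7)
The strategy is to recognise $\log\varphi^s$ as a positive linear combination of logarithms of operator-norm potentials, via the exterior algebra factorisation
\[
\varphi^s(A) \;=\; \bigl\|\WEDGE^{\lfloor s\rfloor} A\bigr\|^{\,1-\{s\}}\,\bigl\|\WEDGE^{\lceil s\rceil} A\bigr\|^{\{s\}} \qquad (0\le s\le d),
\]
where $\WEDGE^k A$ denotes the induced linear map on $\WEDGE^k\R^d$ (a space of dimension $\binom{d}{k}$), $\|\cdot\|$ is the operator norm, and $\{s\}:=s-\lfloor s\rfloor$. For $s\ge d$ one has instead $\log\varphi^s=(s/d)\log|\det A|$, which is additive and locally constant; the classical variational principle on the full shift then yields a unique equilibrium state, namely the Bernoulli measure with weights proportional to $|\det A_i|^{s/d}$, which is manifestly fully supported. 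The substantive case is $0<s<d$, which I would reduce to a general theorem on equilibrium states of operator-norm potentials $A\mapsto\|\rho(A)\|$ induced by representations $\rho$ of the free semigroup on $N$ generators into $GL(V)$ with $V$ finite-dimensional.

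For this general theorem the plan has two stages. First, extending the existence theory of K\"aenm\"aki, I would prove that any ergodic equilibrium state $\mu$ of such a norm potential satisfies a one-sided Gibbs inequality
\[
\mu([x_1,\ldots,x_n]) \;\ge\; Ce^{-nP}\bar\varphi\bigl(A_{x_n}\cdots A_{x_1}\bigr)
\]
for a suitable submultiplicative potential $\bar\varphi$ comparable to $\|\rho(\cdot)\|$, using upper semi-continuity of the subadditive pressure together with a convexity/perturbation argument around $\mu$. Full support is then immediate, since every cylinder $[i]$ receives $\mu$-measure at least $Ce^{-P}\bar\varphi(A_i)>0$. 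Second, I would argue by induction on $\dim V$ along invariant flags: if $\rho$ is irreducible on $V$, the Gibbs bound combined with irreducibility forces uniqueness; otherwise, for any proper common invariant subspace $W\subset V$ one has $\|\rho(A)\|\asymp\max\bigl(\|\rho(A)|_W\|,\|\bar\rho(A)\|\bigr)$, where $\bar\rho$ is the quotient representation on $V/W$, and one shows that each ergodic equilibrium state is ``selected'' by precisely one of the two subrepresentations, reducing to a flag of strictly smaller total dimension. Iterating along a maximal invariant flag bounds the number of ergodic equilibrium states by $\dim V$. Applying this with $\rho=\WEDGE^k$ yields the bound $\binom{d}{k}$ in the integer case $s=k<d$. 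For non-integer $s$ the logarithmic potential is a convex combination of two such norm potentials, and one runs the flag reduction in parallel on both $\WEDGE^{\lfloor s\rfloor}\R^d$ and $\WEDGE^{\lceil s\rceil}\R^d$; ergodic equilibrium states are then parametrised by pairs of irreducible subquotients, giving $\binom{d}{\lfloor s\rfloor}\binom{d}{\lceil s\rceil}$.

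The principal obstacle is the selection step in the flag induction. Because operator norms behave \emph{sub}additively on invariant decompositions---the norm of $\rho$ on $V$ agrees, up to multiplicative constants, with the \emph{maximum} rather than the sum of the norms on $W$ and on $V/W$---invariant measures do not decompose cleanly through the flag and there is no tautological restriction of an equilibrium state to a subquotient. The delicate point is to prove that for an \emph{ergodic} equilibrium state the asymptotic growth of $\|\rho(A^{(n)})\|$ is realised on a single irreducible subquotient of some maximal flag; this requires combining the Gibbs lower bound with the Kingman subadditive ergodic theorem applied both to the full cocycle and to its restriction to $W$, together with a careful use of the strict concavity of the entropy functional to rule out a genuine ``splitting'' of an ergodic state between $W$ and $V/W$. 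Once this dichotomy is established, the extension to the non-integer case is a parallel application of the same machinery on two exterior powers and amounts to compatible bookkeeping.
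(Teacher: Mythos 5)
Your reduction via exterior powers and your treatment of $s\geq d$ match the paper, and the ``selection'' mechanism for the reducible case (an ergodic equilibrium state sees only one diagonal block of a simultaneous block-triangularisation) is essentially the content of the paper's Theorem~\ref{th:force-diagonal}. However, there is a genuine gap at the heart of your plan: you assert that ``if $\rho$ is irreducible on $V$, the Gibbs bound combined with irreducibility forces uniqueness,'' and for non-integer $s$ you conclude that irreducible pairs of subquotients each contribute exactly one ergodic equilibrium state. This is false once $k\geq 2$. The potential $\Phi(\iii)=\|A_\iii^{(1)}\|^{\beta_1}\|A_\iii^{(2)}\|^{\beta_2}$ can have several distinct ergodic equilibrium states even when both $(A_1^{(1)},\dots,A_N^{(1)})$ and $(A_1^{(2)},\dots,A_N^{(2)})$ are irreducible; the obstruction is the failure of \emph{strong} irreducibility, i.e.\ the existence of a nontrivial finite union of proper subspaces preserved by the action, and the paper's \S\ref{se:seven} constructs an explicit example with two irreducible two-dimensional factors and two ergodic equilibrium states. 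Your flag induction terminates too early, at ``irreducible on each factor,'' whereas the uniqueness frontier is ``strongly irreducible on at least $k-1$ factors'' (Corollary~\ref{co:balt}).

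The paper's way past this is the main technical content you are missing. In the irreducible case one passes to the Zariski closure $G$ of the semigroup generated by the block-diagonal matrices $\bigoplus_i A_j^{(i)}$; the identity component $G^\circ$ has finite index, the minimal $G^\circ$-invariant subspaces $U_i\subset V_i$ have dimension $\ell_i$ dividing $d_i$, and $V_i$ decomposes as a direct sum of $t_i=d_i/\ell_i$ translates of $U_i$. One then builds finitely many invariant tuples $\mathcal{W}_j\subset\prod_i\mathrm{Gr}_{\ell_i}(V_i)$ of subspaces, and the restricted potentials $\Phi^j(\iii)=\max_{(W_i)\in\mathcal{W}_j}\prod_i\|A_\iii^{(i)}|_{W_i}\|^{\beta_i}$ are proved \emph{quasimultiplicative} (Theorem~\ref{pr:quam}, via Zariski-density of the semigroup in $G$ and irreducibility of $G^\circ$). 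Quasimultiplicativity is what delivers the Gibbs inequality and uniqueness, via Feng's criterion (Proposition~\ref{pr:kare}); your proposed ``convexity/perturbation argument around $\mu$'' does not obviously produce the two-sided Gibbs property and is not the mechanism used. Finally, the count is not $\binom{d}{\lfloor s\rfloor}\binom{d}{\lceil s\rceil}$ coming from naive pairing of irreducible subquotients, but rather $(\max_i t_i)^{-1}\prod_i t_i$ per irreducible pair, summed over the blocks of the triangularisation; the bookkeeping in Theorem~\ref{th:force-diagonal} is what brings this total under $\prod_i d_i$. Your final numerical bound happens to coincide with the paper's, but only because the stated bound is not sharp; the reasoning that produces your number would give a strictly wrong answer in cases like the paper's $k=2$, $d_1=d_2=2$ example.
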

A detailed description of the structure of the ergodic equilibrium states is complicated to express and is deferred until the following section, but we remark that each ergodic equilibrium state satisfies a so-called \emph{Gibbs inequality} which uniquely characterises it in the space of all $\sigma$-invariant Borel probability measures on $\Sigma_N$. Theorem~\ref{th:intro2} resolves a question of A. K\"aenm\"aki, who asked in \cite{Ka04} whether the number of ergodic equilibrium states is always finite. We note that by standard ergodic decomposition arguments, the set of all equilibrium states associated to fixed $A_1,\ldots,A_N$ and $s$ is precisely the convex hull of the set of ergodic equilibrium states. The case $s \geq d$ can be treated using elementary arguments and the identity $\varphi^s(AB)\equiv \varphi^s(A)\varphi^s(B)$ which holds in this parameter regime: this article therefore focuses on the where case $s \in (0,d)$. 

The equilibrium states of the singular value function $\varphi^s$ in dimension two were fully characterised by D.-J. Feng and A. K\"aenm\"aki \cite{FeKa11} and their ergodic properties investigated thoroughly in \cite{Mo17b,Mo17a}. The case $d=3$ of Theorem~\ref{th:intro2} was proved by A. K\"aenm\"aki and the second named author in \cite{KaMo16}. The case $s\geq d$ is trivial. Examples were constructed in \cite{KaMo16} to show that $(d-\lfloor s\rfloor){d \choose \lfloor s\rfloor}$ distinct ergodic equilibrium states can exist when $s \in (0,d) \setminus \mathbb{Z}$ and that ${d \choose s}$ can exist when $s \in (0,d)\cap \mathbb{Z}$. These lower estimates for the maximum number of equilibrium states were proved in \cite{KaMo16} to be sharp when $d \leq 3$ and conjectured to be sharp in higher dimensions. We therefore do not expect the bound for the number of equilibrium states in Theorem~\ref{th:intro2} to be sharp for non-integer $s \in (0,d)$. An algebraic trick introduced in \cite[\S5]{Mo16} was recently applied in \cite{KaLi17} to bound the number of ergodic equilibrium states when $s \in (0,d) \cap \mathbb{Q}$: if $s-\lfloor s \rfloor = \frac{p}{q}\in\mathbb{Q}$ this gives a bound of ${d \choose \lfloor s\rfloor }^{q-p}{d \choose \lceil s \rceil}^p$, which is clearly weaker than Theorem \ref{th:intro2} for non-integer $s$ and gives no information at all in the case where $s$ is irrational.

As an application Theorem~\ref{th:intro2} permits us to prove the following property of the affinity dimension which was discussed at the start of the introduction:
\begin{theorem}\label{th:aff}
Let $N\geq 2$ and let $T_1,\ldots,T_N \colon \mathbb{R}^d \to \mathbb{R}^d$ be invertible affine contractions. Then
\[\dim_{\mathrm{aff}}(T_1,\ldots,T_{N-1})<\dim_{\mathrm{aff}}(T_1,\ldots,T_{N}).\] 
\end{theorem}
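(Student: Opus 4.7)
Let $A_i$ denote the linear part of $T_i$ and set $s := \dim_{\mathrm{aff}}(T_1,\ldots,T_N)$, so that by definition $P((A_1,\ldots,A_N),s)=0$. Since the function $t \mapsto P((A_1,\ldots,A_{N-1}),t)$ is continuous, strictly decreasing, and has its unique zero at $\dim_{\mathrm{aff}}(T_1,\ldots,T_{N-1})$, it is enough to prove that
\[
P((A_1,\ldots,A_{N-1}),s) < 0.
\]
The case $s \geq d$ is elementary: there $\varphi^s$ is multiplicative, the pressure collapses to $\log \sum_{i=1}^{N} |\det A_i|^{s/d}$, and strict decrease upon deleting a term is obvious. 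Assume henceforth that $s \in (0,d)$.

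The strategy is to combine the variational principle for subadditive potentials with the \emph{full support} conclusion of Theorem~\ref{th:intro2}. Write $\Sigma_{N-1} \subset \Sigma_N$ for the closed $\sigma$-invariant subset consisting of sequences that never use the letter $N$. The subadditive variational principle yields
\[
P((A_1,\ldots,A_{N-1}),s) = \sup_\nu \left( h(\nu) + \inf_{n \geq 1} \frac{1}{n} \int \log \varphi^s(A_{x_n}\cdots A_{x_1}) \, d\nu \right),
\]
where the supremum runs over all $\sigma$-invariant Borel probability measures $\nu$ on $\Sigma_{N-1}$. Entropy is upper semicontinuous on the compact space of such measures, and the subadditive functional on the right is upper semicontinuous as an infimum of continuous linear functionals (thanks to submultiplicativity of $\varphi^s$), so the supremum is attained at some $\nu_0$.

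View $\nu_0$ as an element of $\mathcal{M}_\sigma(\Sigma_N)$. It assigns mass zero to every cylinder that mentions the symbol $N$, and is therefore not fully supported on $\Sigma_N$. By Theorem~\ref{th:intro2} every equilibrium state of $\varphi^s$ for the full system $(A_1,\ldots,A_N)$ is fully supported on $\Sigma_N$; consequently $\nu_0$ cannot be such an equilibrium state, and hence the variational characterisation of $P((A_1,\ldots,A_N),s) = 0$ forces the strict inequality
\[
P((A_1,\ldots,A_{N-1}),s) = h(\nu_0) + \inf_{n \geq 1} \frac{1}{n}\int \log \varphi^s(A_{x_n}\cdots A_{x_1}) \, d\nu_0 < 0,
\]
which is what we wanted.

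\textbf{Main obstacle.} The whole mechanism hinges on the full-support assertion in Theorem~\ref{th:intro2}: without it, nothing would prevent a $\sigma$-invariant measure supported on the proper subshift $\Sigma_{N-1}$ from attaining the pressure of the full system, giving equality rather than the desired strict drop. All of the remaining ingredients — the subadditive variational principle, the upper semicontinuity of entropy and of the subadditive functional on $\mathcal{M}_\sigma$, strict monotonicity of $s \mapsto P(\varphi^s)$, and the multiplicative regime $s \geq d$ — are standard and cause no difficulty.
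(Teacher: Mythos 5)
Your proof is correct and rests on exactly the same mechanism as the paper's: an equilibrium state for the restricted system lives on $\Sigma_{N-1}$, hence is not fully supported on $\Sigma_N$, and therefore by the full-support conclusion of Theorem~\ref{th:intro2} it cannot attain the pressure of the full system. The paper packages this as a contradiction via the Lyapunov-dimension framework rather than the direct pressure inequality you give, but this is purely presentational (the separate treatment of $s\geq d$ is also unnecessary, since Theorem~\ref{th:intro2} covers that range too).
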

\begin{proof}
Let the contractions $T_i$ be given by $T_ix=A_ix+v_i$ for all $i=1,\ldots,N$ and $x \in \mathbb{R}^d$. The inequality $\dim_{\mathrm{aff}}(T_1,\ldots,T_{N-1})\leq \dim_{\mathrm{aff}}(T_1,\ldots,T_{N})$ follows from the definition and properties of the pressure and affinity dimension so if the conclusion is false then $\dim_{\mathrm{aff}}(T_1,\ldots,T_{N-1})=\dim_{\mathrm{aff}}(T_1,\ldots,T_{N})=s$, say. Trivially $s>0$. In particular there exists a shift-invariant Borel probability measure on $\Sigma_{N-1}$ with Lyapunov dimension equal to $s$, which as noted above is necessarily an equilibrium state of $\varphi^s$ with respect to $(A_1,\ldots,A_{N-1})$. Since $P((A_1,\ldots,A_{N-1}),s)=P((A_1,\ldots,A_N),s)=0$ by the definition of affinity dimension this measure may also be regarded as an equilibrium state of $\varphi^s$ with respect to $(A_1,\ldots,A_N)$ with support $\Sigma_{N-1}\subset \Sigma_N$, but by Theorem~\ref{th:intro2} such a measure must be fully supported on $\Sigma_N$ and this contradiction completes the proof.
\end{proof}
We note that Theorem~\ref{th:aff} is false if the affinities are not assumed to be invertible: for example, if $d=2$ and $\dim_{\mathrm{aff}}(T_1,\ldots,T_N)>1$ but $\mathrm{rank}$ $A_N=1$ then 
it is not difficult to see that $\dim_{\mathrm{aff}}(T_1,\ldots,T_N)=\dim_{\mathrm{aff}}(T_1,\ldots,T_{N-1})$. For further examples see \cite{KaMo16}. Theorem \ref{th:aff} is not difficult to prove for $d=2$ using the results of \cite{FeKa11} and was proved for $d=3$ in \cite{KaMo16}; in the special case where $d$ is arbitrary but $\dim_{\mathrm{aff}}(T_1,\ldots,T_{N})$ is rational, the result was proved in \cite{KaLi17}.

A folklore open problem in the dimension theory of self-affine sets asks under what circumstances the Hausdorff dimension of the attractor of an iterated function system is reduced when one of the transformations $T_i$ is removed. It is clear that this property requires some conditions on the relationship between the different maps $T_i$ in order to avoid trivial counterexamples: for example, if two invertible affine contractions $T_1,T_2$ are given then the two systems $(T_1,T_2)$ and $(T_1,T_2,T_2)$ have unequal affinity dimensions by Theorem~\ref{th:aff} but obviously give rise to the same attractor and no reduction in the Hausdorff dimension can occur when one of the copies of $T_2$ is deleted. To determine complete necessary and sufficient conditions on the maps $T_i$ under which the removal of a transformation reduces the Hausdorff dimension of the attractor thus seems to require a degree of understanding of the relationship between properties of the attractor (such as the Hausdorff dimension) and properties of the iterated function system (such as the affinity dimension). As was remarked earlier in the introduction this relationship is currently far from being understood. We however note the following easy consequence of Theorem~\ref{th:aff} for the Hausdorff dimension of self-affine sets:
\begin{corollary}\label{co:xsapples}
Let $N \geq 2$, let $T_1,\ldots,T_N \colon \mathbb{R}^d\to\mathbb{R}^d$ be invertible affine contractions and let $X$ be the attractor of $(T_1,\ldots,T_N)$. If $\dim_H X$ is equal to $ \dim_{\mathrm{aff}} (T_1,\ldots,T_N)$ then the attractor $X'$ of $(T_1,\ldots,T_{N-1})$ satisfies $\dim_H X'<\dim_H X$.
\end{corollary}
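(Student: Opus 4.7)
The plan is to string together three facts: Falconer's universal upper bound $\dim_H X' \leq \dim_{\mathrm{aff}}(T_1,\ldots,T_{N-1})$ for the smaller attractor, the strict inequality $\dim_{\mathrm{aff}}(T_1,\ldots,T_{N-1}) < \dim_{\mathrm{aff}}(T_1,\ldots,T_N)$ supplied by Theorem~\ref{th:aff}, and the hypothesis $\dim_{\mathrm{aff}}(T_1,\ldots,T_N) = \dim_H X$. Concatenating these gives
\[
\dim_H X' \;\leq\; \dim_{\mathrm{aff}}(T_1,\ldots,T_{N-1}) \;<\; \dim_{\mathrm{aff}}(T_1,\ldots,T_N) \;=\; \dim_H X,
\]
which is exactly the desired conclusion.

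So essentially no new work is required beyond invoking Theorem~\ref{th:aff}; the corollary is a one-line deduction. The only thing to verify at the outset is that the smaller system $(T_1,\ldots,T_{N-1})$ still consists of invertible affine contractions (so that the hypotheses of Theorem~\ref{th:aff} and the invertibility needed for the affinity-dimension framework apply to it), which is immediate since invertibility and contraction are inherited from the original tuple. One should also note that Falconer's upper bound $\dim_H X' \leq \dim_{\mathrm{aff}}(T_1,\ldots,T_{N-1})$ is precisely the general inequality recalled earlier in the introduction (attributed to \cite{Fa88}) and requires no hypothesis on the maps other than that they be contracting affinities.

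There is no real obstacle here; the whole content of the corollary lies in Theorem~\ref{th:aff}. The minor subtlety worth flagging in the written proof is that $N-1 \geq 1$ is needed for the left-hand attractor $X'$ to be defined in the standard sense — but this is guaranteed by the hypothesis $N \geq 2$. No appeal to the structure theory of equilibrium states is needed at this stage beyond what has already been packaged into Theorem~\ref{th:aff}.
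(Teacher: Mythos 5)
Your proof is correct and follows exactly the same chain of inequalities as the paper's own proof: Falconer's bound $\dim_H X' \leq \dim_{\mathrm{aff}}(T_1,\ldots,T_{N-1})$, the strict inequality from Theorem~\ref{th:aff}, and the hypothesis $\dim_{\mathrm{aff}}(T_1,\ldots,T_N)=\dim_H X$. The extra remarks about the smaller system inheriting invertibility and contractivity, and about $N-1\geq 1$, are harmless observations the paper leaves implicit.
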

\begin{proof}
We have $\dim_H X' \leq \dim_{\mathrm{aff}} (T_1,\ldots,T_{N-1})$ by a result of Falconer \cite[Proposition 5.1]{Fa88}, $\dim_{\mathrm{aff}}(T_1,\ldots,T_{N-1})<\dim_{\mathrm{aff}}(T_1,\ldots,T_{N})$ by Theorem~\ref{th:aff} and $ \dim_{\mathrm{aff}} (T_1,\ldots,T_N)=\dim_H X$ by hypothesis.
\end{proof}
Affine iterated function systems which meet the hypotheses of Corollary~\ref{co:xsapples} are in a reasonable sense abundant. A theorem of Falconer \cite[Theorem~5.3]{Fa88} asserts that if $A_1,\ldots,A_N$ are linear contractions of $\mathbb{R}^d$ with $\max_i\|A_i\|<\frac{1}{3}$ then for Lebesgue-almost-every $(v_1,\ldots,v_N) \in (\mathbb{R}^d)^N$ the transformations $T_1,\ldots,T_N$ defined by $T_ix\coloneqq A_ix+v_i$ give rise to an attractor with Hausdorff dimension equal to the affinity dimension of $(T_1,\ldots,T_N)$. (The bound $\frac{1}{3}$ was subsequently improved to $\frac{1}{2}$ by B. Solomyak in \cite{So98}.) Related results in which the additive parts of $T_i$ are fixed and the linear parts chosen randomly according to Lebesgue measure were more recently given in \cite{BaKaKo17}. Explicit examples of affine iterated function systems whose attractor has Hausdorff dimension equal to the affinity dimension have been given in \cite{Ba15,BaKa15,FaKe16,HuLa95,MoSh16}. 
Clearly Corollary~\ref{co:xsapples} is also valid when $\dim_H$ is replaced throughout the statement with any other notion of dimension of compact sets which is bounded above by the affinity dimension and is monotone with respect to set inclusion. Whilst the condition $\dim_H X = \dim_{\mathrm{aff}} (T_1,\ldots,T_N)$ is sufficient for the outcome $\dim_H X'<\dim_H X$, it is not necessary: this is discussed further in \S\ref{se:mon} below.

The reader may reasonably ask what difficulties arose in the earlier works \cite{FeKa11,KaMo16} which the present article overcomes. The key difficulty in passing above dimension 2 was essentially as follows. In dimension two the problem reduces to a question concerning the norms $\|A\|^t$ of suitable matrix products $A$ for some fixed parameter $t \in (0,1]$. Since the norm interacts well with the additive structure of $\mathbb{R}^d$ and $M_d(\mathbb{R})$, the obstructions to uniqueness of the equilibrium state arise in terms of the \emph{additive} structures -- that is, linear subspaces of $\mathbb{R}^d$ -- which are preserved by $A_1,\ldots,A_N$. In particular these structures are preserved by the algebra generated by the matrices $A_i$ and lend themselves to the use of linear-algebraic methods. Above two dimensions (or more precisely, when $s \in (1,d-1)\setminus \mathbb{Z}$) one must study directly the quantity $\|A^{\wedge \lfloor s\rfloor}\|^{1+\lfloor s \rfloor-s} \|A^{\wedge\lceil s \rceil}\|^{s-\lfloor s\rfloor}$ which by contrast does not interact in a meaningful way with the additive structure of $\mathbb{R}^d$ or of $M_d(\mathbb{R})$ and necessitates the investigation of structures which are invariant only under the \emph{semigroup} generated by $A_1,\ldots,A_N$. (Here $A^{\wedge k}$ denotes the $k^{\mathrm{th}}$ exterior power of $A$ which will be defined in the following section.) The proper investigation of this quantity therefore requires the use of algebraic geometry in place of linear algebra, and the existence of multiple ergodic equilibrium states is associated with the existence of certain structures which are invariant under the semigroup generated by the matrices $A_i$ but not (in general) under the algebra which they generate. It will be seen that multiplicity of the ergodic equilibrium states of $\varphi^s$ in the parameter range $s \in (1,d-1)\setminus \mathbb{Z}$ is associated with the existence of nontrivial finite invariant subsets of $\mathrm{Gr}(\WEDGE^{\lfloor s \rfloor} \mathbb{R}^d)\times \mathrm{Gr}(\WEDGE^{\lceil s\rceil}\mathbb{R}^d)$, where $\mathrm{Gr}(V)$ denotes the Grassmannian of $V$.

As regards the passage from dimension three to arbitrary dimension, the principal innovation of the preceding article \cite{KaMo16} was a criterion for $A_1,\ldots,A_N$ to have a \emph{unique} equilibrium state with respect to $\varphi^s$. This criterion was combined in \cite{KaMo16} with inherently low-dimensional ``tricks'' which exploited the fact that any nontrivial subspace of $\mathbb{R}^3$ is either one-dimensional or one-codimensional and the fact that $\mathbb{R}^3$ is isomorphic to $\WEDGE^2\mathbb{R}^3$. Between them these methods happened to be sufficient to treat all three-dimensional cases in an ad-hoc manner. Absent from this approach was any method by which to understand those cases in which $A_1,\ldots,A_N$ preserve a subspace or finite union of subspaces with neither dimension nor codimension equal to $1$, a possibility which arises immediately on passage to dimension $4$. This approach also lacked any general mechanism to deal with the case in which neither $A_1^{\wedge \lfloor s \rfloor},\ldots,A_N^{\wedge \lfloor s \rfloor}$ nor $A_1^{\wedge \lceil s \rceil},\ldots,A_N^{\wedge \lceil s \rceil}$ preserve a common invariant subspace but nonetheless more than one ergodic equilibrium state exists, and further did not include a mechanism for handling the case where $A_1^{\wedge \lfloor s \rfloor},\ldots,A_N^{\wedge \lfloor s \rfloor}$ or $A_1^{\wedge \lceil s \rceil},\ldots,A_N^{\wedge \lceil s \rceil}$ preserves a common invariant subspace but $A_1,\ldots,A_N$ does not. In the present article we resolve all three of these issues and in particular elucidate the relationship between the existence of multiple ergodic equilibrium states of $\varphi^s$ and the existence of nontrivial finite invariant subsets of $\mathrm{Gr}(\WEDGE^k \mathbb{R}^d)\times \mathrm{Gr}(\WEDGE^{k+1}\mathbb{R}^d)$.

\section{Statement of technical results}

As in the introduction, for each $N \geq 2$ let $\Sigma_N\coloneqq \{1,\ldots,N\}^{\mathbb{N}}$ which we equip with the infinite product topology, under which $\Sigma_N$ is compact and metrisable. Let $\sigma \colon \Sigma_N \to \Sigma_N$ denote the full shift and $\mathcal{M}_\sigma$ the set of all $\sigma$-invariant Borel probability measures on $\Sigma_N$. In the weak-* topology the set $\mathcal{M}_\sigma$ is compact and metrisable as well as being nonempty. When $N$ is understood we will say that a \emph{word} is a finite sequence of elements of $\{1,\ldots,N\}$. If $\iii=(\iii_k)_{k=1}^n \in \{1,\ldots,N\}^n$ is a word we call $n$ the \emph{length} of the word and write $|\iii|=n$. We let $\Sigma_N^*$ denote the set of all words over the symbols $\{1,\ldots,N\}$. If $\iii$ and $\jjj$ are words we let $\iii\jjj$ denote the word obtained by concatenating $\iii$ and $\jjj$ in the obvious fashion, passing first through the symbols of $\iii$ and then through those of $\jjj$. We note that $\Sigma_N^*$ is a semigroup with respect to concatenation. If $A_1,\ldots,A_N$ are linear transformations of some vector space $V$ then for every $\iii=(\iii_k)_{k=1}^n \in \Sigma_N^*$ we define
\[A_\iii\coloneqq A_{\iii_n}\cdots A_{\iii_2}A_{\iii_1},\]
a notation which will be applied throughout this work. Clearly this defines a semigroup homomorphism from $\Sigma_N^*$ to the space of linear endomorphisms of $V$.

If $x =(x_k)_{k=1}^\infty \in \Sigma_N$ and $n \geq 1$ we let $x|_n$ denote the word $(x_k)_{k=1}^n$. If $\iii \in \Sigma_N^*$ is any word we define
\[[\iii]\coloneqq \left\{(x_k)_{k=1}^\infty \in \Sigma_N \colon x|_{|\iii|}=\iii\right\}\]
which we call a \emph{cylinder set}. Cylinder sets are both closed and open and form a basis for the topology of $\Sigma_N$. In particular any Borel probability measure on $\Sigma_N$ is completely characterised by its values on cylinder sets.

For the purposes of this work we define a \emph{potential} to be any function $\Phi \colon \Sigma_N^* \to [0,+\infty)$. A potential $\Phi$ implicitly defines a sequence of functions $\Phi_n \colon \Sigma_N \to \mathbb{R}$ by $\Phi_n(x)\coloneqq \Phi(x|_n)$. We call a potential \emph{submultiplicative} if $\Phi(\iii\jjj)\leq \Phi(\iii)\Phi(\jjj)$  for every $\iii,\jjj\in\Sigma_N^*$, or equivalently if $\Phi_{n+m}(x) \leq \Phi_n(\sigma^mx)\Phi_m(x)$ for every $n,m\geq 1$ and $x \in \Sigma_N$. If $\Phi_n$ is a submultiplicative potential and $\mu$ an ergodic measure on $\Sigma_N$ then by the subadditive ergodic theorem we have for $\mu$-a.e. $x \in \Sigma_N$
\[\lim_{n \to \infty} \frac{1}{n}\log \Phi_n(x) = \lim_{n \to\infty}\frac{1}{n}\int \log\Phi_n\,d\mu = \lim_{n \to\infty}\frac{1}{n}\sum_{|\iii|=n} \mu([\iii])\log\Phi(\iii).\]
We shall call this limit the \emph{asymptotic average} of $\Phi$ with respect to $\mu$ and denote it by $\Lambda(\Phi,\mu)$. We define the \emph{pressure} of a submultiplicative potential $\Phi$ to be the quantity
\[P(\Phi)\coloneqq \lim_{n \to \infty} \frac{1}{n}\log \sum_{|\iii|=n} \Phi(\iii)\]
and observe that this limit exists by subadditivity. The subadditive variational principle of \cite{CaFeHu08} asserts that if $\Phi$ is any submultiplicative potential then
\begin{equation}\label{eq:svp}P(\Phi)=\sup_{\mu \in \mathcal{M}_\sigma} \left[h(\mu) + \Lambda(\Phi,\mu)\right]\end{equation}
where $h$ denotes Kolmogorov-Sinai entropy. By general considerations involving upper semi-continuity, compactness and convexity this supremum is always attained by at least one ergodic measure, and we call any measure attaining this supremum an \emph{equilibrium state} of $\Phi$. In general multiple ergodic equilibrium states may exist.

We will say that a potential $\Phi$ is \emph{quasimultiplicative} if there exist $\delta>0$ and a finite set $F \subset \Sigma_N^*$ such that
\[\max_{\jjj \in F} \Phi(\iii\jjj\kkk) \geq \delta \Phi(\iii)\Phi(\kkk)\]
for every $\iii,\kkk \in \Sigma_N^*$. If a potential is both submultiplicative and quasimultiplicative then it has a unique equilibrium state which is perforce ergodic. This principle is summarised by the following result of D.-J. Feng (a special case of \cite[Theorem 5.5]{Fe11}) which will be fundamental to our analysis. (Related results may be found in e.g. \cite{Fe04,FeLa02,FeKa11,KaRe14,Ya11}.)
\begin{proposition}[\cite{Fe11}]\label{pr:kare}
Let $N \geq 2$ and let $\Phi \colon \Sigma_N^* \to [0,+\infty)$ be a submultiplicative and quasimultiplicative potential. Then there exists a unique equilibrium state $\mu$ for $\Phi$. Furthermore there exists $C>0$ such that
\begin{equation}\label{eq:gibbs}C^{-1}\Phi(\iii) \leq \frac{\mu([\iii])}{e^{-|\iii|P(\Phi)}}\leq C\Phi(\iii)\end{equation}
for every $\iii \in \Sigma_N^*$ and $\mu$ is the unique $\sigma$-invariant Borel probability measure on $\Sigma_N$ with this property.
\end{proposition}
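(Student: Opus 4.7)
The plan is to construct the equilibrium state $\mu$ explicitly as a weak-$*$ accumulation point of weighted empirical measures on $\Sigma_N$, verify that it satisfies the two-sided Gibbs inequality \eqref{eq:gibbs}, and then deduce both that $\mu$ attains the supremum in the variational principle \eqref{eq:svp} and that it is uniquely characterised by the Gibbs property. First, I would introduce the finite-word weights $\nu_n(\iii) \coloneqq \Phi(\iii)/Z_n$ on words of length $n$, where $Z_n \coloneqq \sum_{|\iii|=n}\Phi(\iii)$; transport each $\nu_n$ to a probability measure $\mu_n$ on $\Sigma_N$ by extending finite words arbitrarily to infinite ones; and pass to a weak-$*$ accumulation point $\mu$ of the Ces\`aro averages $\bar\mu_n \coloneqq \frac{1}{n}\sum_{k=0}^{n-1}\sigma_*^k\mu_n$, which lies in $\mathcal{M}_\sigma$ by construction.

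Before addressing the Gibbs bounds themselves, the pivotal preliminary estimate is the uniform comparison $Z_n \asymp e^{nP(\Phi)}$. The lower bound $Z_n \geq e^{nP(\Phi)}$ comes immediately from Fekete's lemma applied to the subadditive sequence $\log Z_n$. For the opposite bound, summing the quasimultiplicativity inequality over all $\iii,\kkk$ of fixed lengths (and noting that distinct $(\iii,\kkk)$ produce distinct words $\iii\jjj\kkk$) yields $Z_{n+m+L} \geq c Z_n Z_m$, where $L\coloneqq \max_{\jjj \in F}|\jjj|$. Setting $z_n\coloneqq Z_n/e^{nP(\Phi)}$ this becomes $z_{n+m+L} \geq ce^{-LP(\Phi)}z_nz_m$; iterating with $m=n$ and using $\log z_m/m \to 0$ then forces $\sup_n z_n$ to be finite.

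With $Z_n \asymp e^{nP(\Phi)}$ secured, the Gibbs upper bound follows from submultiplicativity: $(\sigma_*^k \mu_N)([\iii]) \leq \Phi(\iii) Z_k Z_{N-k-|\iii|}/Z_N$, so averaging over $k$ and letting $N\to\infty$ delivers $\mu([\iii]) \leq C\Phi(\iii)e^{-|\iii|P(\Phi)}$. For the Gibbs lower bound I would iterate quasimultiplicativity: for any $\iii$ of length $n$ and any words $w_1,w_2$ there exist $\jjj_1,\jjj_2 \in F$ with $\Phi(w_1\jjj_1\iii\jjj_2w_2) \geq \delta^2\Phi(w_1)\Phi(\iii)\Phi(w_2)$; summing over $w_1,w_2$ of suitable lengths and using that for fixed $(|\jjj_1|,|\jjj_2|)$ the map $(w_1,w_2)\mapsto w_1\jjj_1\iii\jjj_2 w_2$ is injective produces a shifted contribution to $\bar\mu_N([\iii])$ bounded below by a multiple of $\Phi(\iii) Z_a Z_b/Z_N$, which together with $Z_n \asymp e^{nP(\Phi)}$ gives $\mu([\iii]) \geq C^{-1}\Phi(\iii)e^{-|\iii|P(\Phi)}$. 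Combined with \eqref{eq:svp} this shows that $\mu$ is an equilibrium state.

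For uniqueness, observe that any shift-invariant Borel probability measure $\mu'$ satisfying \eqref{eq:gibbs} is mutually absolutely continuous with $\mu$ with uniformly bounded Radon--Nikodym derivatives in both directions. The functional $\mu \mapsto h(\mu) + \Lambda(\Phi,\mu)$ is affine on $\mathcal{M}_\sigma$ (entropy is affine, and $\Lambda(\Phi,\cdot)$ is affine as a limit of affine functionals), so the set of equilibrium states is a face of $\mathcal{M}_\sigma$; applying the upper Gibbs bound to each ergodic component of $\mu$ then forces $\mu$ itself to be ergodic, and similarly for $\mu'$, after which two mutually absolutely continuous ergodic shift-invariant measures must coincide. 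The main technical obstacle I expect is establishing $Z_n \asymp e^{nP(\Phi)}$ and performing the bookkeeping needed to absorb the bounded length discrepancies introduced by insertions of words from $F$; once those are in place, the remaining steps are extensions of the classical Gibbs-theoretic arguments.
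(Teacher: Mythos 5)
The paper does not prove Proposition~\ref{pr:kare}; it is cited from Feng's article \cite{Fe11} (as a special case of \cite[Theorem~5.5]{Fe11}), so there is no in-paper argument to compare your proof against. Evaluating your attempt on its own terms: the overall strategy --- build $\mu$ as a weak-$*$ limit of Ces\`aro-averaged pushforwards of the $\Phi$-weighted word measures, establish $Z_n \asymp e^{nP(\Phi)}$ from sub- and quasimultiplicativity, read off the two-sided Gibbs bound, and then deduce that $\mu$ is the unique equilibrium state --- is the standard route and is correct in outline. A small repair is needed in the $Z_n$ estimate: different pairs $(\iii,\kkk)$ may select different $\jjj \in F$ of different lengths, so one cannot straightforwardly write $Z_{n+m+L} \geq cZ_nZ_m$. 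The clean fix is to sum over all $\jjj \in F$ and then use submultiplicativity in the form $Z_{n+m+|\jjj|} \leq Z_{|\jjj|}Z_{n+m}$ to obtain $Z_{n+m} \geq cZ_nZ_m$ directly, after which your boundedness argument for $z_n = Z_n e^{-nP(\Phi)}$ goes through.

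The genuine gap is in the uniqueness step, specifically the assertion that ``applying the upper Gibbs bound to each ergodic component of $\mu$ then forces $\mu$ itself to be ergodic.'' This does not follow. If $\mu = p\mu_1 + (1-p)\mu_2$ with $\mu_1,\mu_2$ mutually singular ergodic measures, then each $\mu_i([\iii]) \leq p^{-1}\mu([\iii])$ (or $(1-p)^{-1}$) inherits an upper Gibbs bound with a worse constant, and nothing in what you have written produces a contradiction: the upper Gibbs bound on the components is perfectly consistent with $\mu$ being non-ergodic. Establishing ergodicity of the Gibbs measure genuinely requires using \emph{both} sides of the Gibbs inequality together with the quasimultiplicativity of $\Phi$ (which, via \eqref{eq:gibbs}, translates into an approximate quasi-Bernoulli property of $\mu$ of the form $\mu([\iii\jjj]) \leq C'\mu([\iii])\mu([\jjj])$ and $\max_{\kkk\in F}\mu([\iii\kkk\jjj]) \geq c\,\mu([\iii])\mu([\jjj])$), from which a mixing-type argument gives ergodicity. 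Separately, your uniqueness argument only shows that any \emph{Gibbs} measure for $\Phi$ coincides with $\mu$; to conclude that $\mu$ is the unique \emph{equilibrium state} one also needs a Bowen-type argument showing that every ergodic equilibrium state of $\Phi$ is absolutely continuous with respect to (and hence equal to) $\mu$, which is a distinct step not supplied by mutual absolute continuity of Gibbs measures.
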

Henceforth we will say that $\mu$ \emph{satisfies a Gibbs inequality with respect to $\Phi$} if \eqref{eq:gibbs} is satisfied for every $\iii \in \Sigma_N^*$ and for some constant $C>0$ depending only on $\Phi$. If $\mu$ satisfies a Gibbs inequality with respect to $\Phi$, and $\Phi(\iii)>0$ for all $\iii \in \Sigma_N^*$, then clearly $\mu$ is fully supported on $\Sigma_N$. We notice also that in the situation of Proposition \ref{pr:kare} the measure $\mu$ satisfies the approximate submultiplicativity property $\mu([\iii\jjj])\leq C^3\mu([\iii])\mu([\jjj])$ for every $\iii,\jjj \in \Sigma_N^*$ by direct appeal to the Gibbs inequality and the submultiplicativity of $\Phi$.

The principal focus of this article is on potentials of the form $\Phi(\iii)=\varphi^s(A_\iii)$ where $0<s<d$ and $A_1,\ldots,A_N \in \mathbb{R}^d$. Central to our analysis will be a characterisation of the singular value function $\varphi^s$ in terms of exterior algebra. If $1 \leq k \leq d$ we recall that the \emph{$k^{\mathrm{th}}$ exterior power} of $\mathbb{R}^d$ is 
the vector space spanned by formal expressions of the form $u_1\wedge \cdots \wedge u_k$ where $u_1,\ldots,u_k \in \mathbb{R}^d$, subject to the identifications 
\[(\lambda u_1)\wedge u_2 \wedge \cdots \wedge u_k= \lambda(u_1\wedge \cdots \wedge u_k),\]
 \[u_1 \wedge \cdots \wedge u_k = (-1)^{\mathrm{sign}(\varsigma)} u_{\varsigma(1)}\wedge \cdots \wedge u_{\varsigma(k)},\]
 \[(u_1 \wedge \cdots \wedge u_k) + (u_1' \wedge u_2 \wedge \cdots \wedge u_k) = (u_1+u_1')\wedge u_2 \cdots \wedge u_k\]
 where $\lambda \in \mathbb{R}$ and where $\varsigma \colon \{1,\ldots,k\} \to \{1,\ldots,k\}$ is any permutation. If an inner product $\langle \cdot,\cdot\rangle$ on $\mathbb{R}^d$ is understood, then
 \begin{equation}\label{eq:inducednorm}\langle u_1\wedge \cdots \wedge u_k,v_1\wedge \cdots \wedge v_k\rangle \coloneqq  \det [\langle u_i,v_j\rangle]_{i,j=1}^d\end{equation}
 extends by linearity to an inner product on $\WEDGE^k \mathbb{R}^d$. If $u_1,\ldots,u_d$ is a basis for $\mathbb{R}^d$ then the vectors $u_{i_1}\wedge \cdots \wedge u_{i_d}$ such that $1 \leq i_1 < i_2 < \cdots < i_k\leq d$ form a basis for $\WEDGE^k \mathbb{R}^d$, and in particular $\dim \WEDGE^k \mathbb{R}^d={d\choose k}$. If $A \colon \mathbb{R}^d \to \mathbb{R}^d$ is linear then we define $A^{\wedge k}$ to be the unique linear transformation of $\WEDGE^k \mathbb{R}^d$ such that $A^{\wedge k}(u_1\wedge \cdots \wedge u_k)=Au_1\wedge \cdots \wedge Au_k$ for all $u_1,\ldots,u_k \in \mathbb{R}^d$. We have $(A^{\wedge k})^\top=(A^\top)^{\wedge k}$ and $(AB)^{\wedge k}=A^{\wedge k}B^{\wedge k}$ for all linear endomorphisms $A,B$ of $\mathbb{R}^d$. If $A \colon \mathbb{R}^d \to \mathbb{R}^d$ is given and $e_1,\ldots,e_d$ form a basis for $\mathbb{R}^d$ given by (generalised) eigenvectors of $A$ then it is straightforward to check that vectors of the form $e_{i_1}\wedge \cdots \wedge e_{i_k}$ with $1 \leq i_1<\cdots <i_k\leq d$ form a basis for $\WEDGE^k\mathbb{R}^d$ given by (generalised) eigenvectors of $A^{\wedge k}$. It follows from these considerations that $\|A^{\wedge k}\|=\|(A^{\wedge k})^\top A^{\wedge k}\|^{1/2}=\prod_{i=1}^k \alpha_i(A)$ for any linear endomorphism $A$ of $\mathbb{R}^d$ and any $1 \leq k \leq d$, where $\|\cdot\|$ denotes the Euclidean norm implied by the  inner product \eqref{eq:inducednorm}. These considerations yield the characterisation
 \[\varphi^s(A)=\left\|A^{\wedge \lfloor s \rfloor}\right\|^{1+\lfloor s \rfloor -s} \left\|A^{\wedge \lceil s \rceil}\right\|^{s-\lfloor s \rfloor}\]
 for all linear maps $A \colon \mathbb{R}^d \to \mathbb{R}^d$ and $s \in (0,d)$, where we adhere to the conventions $\WEDGE^0\mathbb{R}^d=\mathbb{R}$, $A^{\wedge 0}\equiv\mathrm{id}_{\mathbb{R}}$. This formulation makes the submultiplicativity property $\varphi^s(AB)\leq\varphi^s(A)\varphi^s(B)$ plain. Theorem~\ref{th:intro2} will therefore follow from an investigation of potentials of the form
 \[\Phi(\iii)\coloneqq \left\|A^{\wedge \lfloor s \rfloor}_\iii\right\|^{1+\lfloor s \rfloor -s} \left\|A^{\wedge \lceil s \rceil}_\iii\right\|^{s-\lfloor s \rfloor}\]
for a given $N$-tuple of invertible matrices $A_1,\ldots,A_N$.

We will obtain Theorem~\ref{th:intro2} as a special case of the following more general statement which is the main result of this article. We recall that $GL(V)$ denotes the group of all invertible linear transformations of a (real) finite-dimensional vector space $V$ and that $GL_d(\mathbb{R})\coloneqq GL(\mathbb{R}^d)$.
\begin{theorem}\label{th:intro1}
Let $N \geq 2$ and for each $i=1,\ldots,k$ let $V_i$ be a real vector space with finite dimension $d_i$, let $(A^{(i)}_1,\ldots,A^{(i)}_N)\in GL(V_i)^N$ and let $\beta_i>0$. Define a potential $\Phi \colon \Sigma_N^* \to (0,+\infty)$ by
\[\Phi(\iii)\coloneqq \prod_{i=1}^k \left\|A^{(i)}_\iii\right\|^{\beta_i}\]
for every $\iii \in \Sigma_N^*$. Then $\Phi$ has no more than $\prod_{i=1}^k d_i$ ergodic equilibrium states, and all of its equilibrium states are fully supported.
\end{theorem}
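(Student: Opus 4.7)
The potential $\Phi$ is submultiplicative (from submultiplicativity of the operator norm on each factor) and strictly positive on $\Sigma_N^*$ (each $A^{(i)}_\iii$ being invertible), so it enters the framework of Proposition~\ref{pr:kare}. The plan is to induct on the total dimension $D\coloneqq \sum_{i=1}^k d_i$. The base case $D=k$ is immediate: each $\|A^{(i)}_\iii\|$ reduces to a scalar, so $\log\Phi$ is an additive cocycle and the classical variational principle yields a unique, fully supported Bernoulli equilibrium state; the bound $\prod_i d_i=1$ is trivially satisfied.

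\textbf{Irreducible case.} For each $i$ let $\mathcal{S}_i\subseteq GL(V_i)$ denote the semigroup generated by $A^{(i)}_1,\ldots,A^{(i)}_N$. Suppose each $\mathcal{S}_i$ acts irreducibly on $V_i$. I would then aim to show that $\Phi$ is quasimultiplicative, whereupon Proposition~\ref{pr:kare} gives a unique equilibrium state which, by the Gibbs inequality and $\Phi>0$, is fully supported. For each individual $i$, Burnside's theorem on irreducible semigroups implies that $\mathcal{S}_i$ linearly spans $\mathrm{End}(V_i)$, from which one extracts a finite $F_i\subseteq\Sigma_N^*$ and $\delta_i>0$ satisfying $\max_{\jjj\in F_i}\|A^{(i)}_\iii A^{(i)}_\jjj A^{(i)}_\kkk\|\geq\delta_i\|A^{(i)}_\iii\|\|A^{(i)}_\kkk\|$. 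Promoting these factor-wise conditions to a single finite $F$ working simultaneously across all $i$ is the subtle step: it would be carried out by a compactness/genericity argument on the projective product $\prod_i\mathbb{P}(V_i)$, since the configurations of singular directions requiring particular connecting words form closed, nowhere dense families that can all be avoided by concatenating suitable elements drawn from each $F_i$.

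\textbf{Reducible case.} Suppose instead some $\mathcal{S}_i$ admits a proper invariant subspace $0\subsetneq W\subsetneq V_i$. Fix an orthogonal complement and write $A^{(i)}_j=\begin{pmatrix}P_j & * \\ 0 & R_j\end{pmatrix}$ with $P_j\coloneqq A^{(i)}_j|_W\in GL(W)$ and $R_j\in GL(V_i/W)$. Define reduced potentials
\[
\Phi'(\iii)\coloneqq\|P_\iii\|^{\beta_i}\prod_{j\neq i}\|A^{(j)}_\iii\|^{\beta_j}, \qquad
\Phi''(\iii)\coloneqq\|R_\iii\|^{\beta_i}\prod_{j\neq i}\|A^{(j)}_\iii\|^{\beta_j},
\]
both of the Theorem~\ref{th:intro1} form with strictly smaller $D$, so the inductive hypothesis applies. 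The key analytic step is
\[
\Lambda(\Phi,\mu)=\max\bigl(\Lambda(\Phi',\mu),\Lambda(\Phi'',\mu)\bigr) \qquad\text{for every ergodic }\mu\in\mathcal{M}_\sigma.
\]
The inequality $\geq$ is immediate from $\|A^{(i)}_\iii\|\geq\max(\|P_\iii\|,\|R_\iii\|)$. For $\leq$, writing $A^{(i)}_\iii=\begin{pmatrix}P_\iii & Q_\iii\\ 0 & R_\iii\end{pmatrix}$, the recursion $Q_{\iii\jjj}=P_\iii Q_\jjj+Q_\iii R_\jjj$ yields $\|Q_\iii\|\leq C\,|\iii|\,\max_{\iii=\iii_2\iii_1}\|P_{\iii_2}\|\|R_{\iii_1}\|$, and a subadditive-ergodic argument then controls $\frac{1}{n}\log\|Q_\iii\|$ $\mu$-a.e.\ by the maximum of the two diagonal Lyapunov rates. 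Combined with the trivial $P(\Phi)\geq\max(P(\Phi'),P(\Phi''))$, the variational principle~\eqref{eq:svp} forces $P(\Phi)=\max(P(\Phi'),P(\Phi''))$, and any ergodic equilibrium state of $\Phi$ is consequently an ergodic equilibrium state of whichever of $\Phi',\Phi''$ realises the pressure. The inductive bound is then $\dim W\cdot\prod_{j\neq i}d_j+(d_i-\dim W)\cdot\prod_{j\neq i}d_j=\prod_j d_j$, and full support is inherited from the inductive hypothesis.

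\textbf{Main obstacle.} The sharpest points I expect to encounter are (i) upgrading individual quasimultiplicativity of each $\|A^{(i)}_\iii\|^{\beta_i}$ to the joint quasimultiplicativity of $\Phi$ in the fully-irreducible case — that is, ruling out conflicting singular-vector alignments across the factors — and (ii) controlling the off-diagonal block $Q_\iii$ in the reducible case, where the recursion produces $|\iii|$ potentially large cross terms whose total growth must still be bounded $\mu$-a.e.\ by the maximum of the two diagonal Lyapunov rates. Should the fully-irreducible case nevertheless admit multiple equilibrium states — as hinted in the introduction by the connection to finite invariant subsets of $\prod_i\mathrm{Gr}(V_i)$ — a finer induction, keyed to such invariant configurations rather than to invariant subspaces in a single $V_i$, would be called for.
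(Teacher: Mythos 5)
The reducible branch of your argument is on the right track and matches the paper's Theorem~\ref{th:force-diagonal}: one passes to a block-triangular form, uses the identity $\Lambda(\Phi,\mu)=\max(\Lambda(\Phi',\mu),\Lambda(\Phi'',\mu))$ for ergodic $\mu$ (the control of the off-diagonal block $Q_\iii$ that you sketch is the standard argument, cf.\ \cite[pp.~129--130]{LArnold}), and the additive count $\dim W\cdot\prod_{j\neq i}d_j+(d_i-\dim W)\cdot\prod_{j\neq i}d_j=\prod_j d_j$ closes the induction.

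The gap is in the irreducible case, and it is fatal to the plan as stated. You claim that if each semigroup $\mathcal{S}_i\subseteq GL(V_i)$ acts irreducibly, then $\Phi(\iii)=\prod_i\|A^{(i)}_\iii\|^{\beta_i}$ is quasimultiplicative, hence has a unique equilibrium state. This is false. The example of \S\ref{se:seven} takes $k=2$, $d_1=d_2=2$, with each tuple $(A^{(i)}_0,\ldots,A^{(i)}_3)$ consisting of anti-diagonal matrices in $GL_2(\mathbb{R})$ generating an irreducible semigroup (no common invariant line exists), yet $\Phi$ has exactly two distinct ergodic equilibrium states; the Gibbs inequalities they satisfy are mutually incompatible because $\lim_n\|(C_0C_3)^n\|^{1/n}=16$ while $\lim_n\|(D_0D_3)^n\|^{1/n}=4$. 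So the potential cannot be quasimultiplicative. Burnside's theorem gives quasimultiplicativity of each factor $\|A^{(i)}_\iii\|^{\beta_i}$ individually (this is Corollary~\ref{co:ral}), but the ``compactness/genericity'' promotion to a \emph{common} connecting word across all $k$ factors fails precisely when the factor semigroups are irreducible but not strongly irreducible: the bad configurations in $\prod_i\mathbb{P}(V_i)$ can be algebraically correlated so that no finite set of words avoids all of them simultaneously. You flagged this risk in your final paragraph, but the ``finer induction keyed to invariant configurations'' you gesture toward is the whole substance of the paper's Theorem~\ref{th:irr}: one passes to the Zariski closure $G$ of the generated semigroup, identifies the minimal $G^\circ$-invariant subspaces $U_i\subseteq V_i$ (of dimension $\ell_i \divides d_i$), and constructs finitely many auxiliary potentials $\Phi^{j}(\iii)=\max_{(W_i)\in\mathcal{W}_j}\prod_i\|A^{(i)}_\iii|_{W_i}\|^{\beta_i}$ which \emph{are} quasimultiplicative; the ergodic equilibrium states of $\Phi$ are then shown to be among the unique equilibrium states of the $\Phi^{j}$, giving the bound $(\max_i t_i)^{-1}\prod_i t_i$ with $t_i=d_i/\ell_i$. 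Your inductive scheme is sound only if this irreducible base case is replaced by a correct argument of this kind; as written it yields a claim (uniqueness when all factors are irreducible) that the paper explicitly disproves.
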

We will see in the proof of Theorem \ref{th:intro1} that the equilibrium states in Theorem \ref{th:intro1} arise as equilibrium states of certain auxiliary potentials $\Phi^{j}$ which are submultiplicative and quasimultiplicative. We may now easily obtain:

\begin{proof}[Proof of Theorem~\ref{th:intro2} assuming Theorem~\ref{th:intro1} ]
The case $s \geq d$ being trivial we assume $0<s<d$. If $s \notin \mathbb{Z}$ then apply Theorem~\ref{th:intro1} with $k=2$, $V_1\coloneqq \WEDGE^{\lfloor s\rfloor}\mathbb{R}^d$, $V_2\coloneqq \WEDGE^{\lceil s\rceil}\mathbb{R}^d$, $A_j^{(1)}\coloneqq A_j^{\wedge \lfloor s \rfloor}$, $A_j^{(2)}\coloneqq A_j^{\wedge \lceil s \rceil}$, $\beta_1\coloneqq  \lceil s \rceil -s $ and $\beta_2\coloneqq  s-\lfloor s\rfloor$. If $s \in \mathbb{Z}$  take $k=1$, $V=\WEDGE^s\mathbb{R}^d$, $A_j^{(1)}\coloneqq A_j^{\wedge s}$ and $\beta_1\coloneqq 1$.
\end{proof}
We note that in principle the definition of the potential $\Phi$ in Theorem~\ref{th:intro1} is sensitive to the choice of norm on $V_i$, but since two potentials $\Phi$, $\Phi'$ which satisfy $C^{-1}\Phi \leq \Phi' \leq C\Phi$ for some constant $C>0$ must have identical equilibrium states by straightforward consideration of the definitions, this consideration has no bearing on the statement of Theorem~\ref{th:intro1} nor on any succeeding result. We shall therefore ignore the precise choice of norms on individual vector spaces and assume henceforth that inner product norms have been chosen arbitrarily but consistently on all spaces being considered.

The proof of Theorem~\ref{th:intro1} falls naturally into two distinct parts, one part dealing with the situation where for every $i=1,\ldots,k$ the matrices $A_1^{(i)},\ldots,A_N^{(i)}$ do not together preserve a proper nonzero linear subspace of $V_i$, and one part passing from this result to the general case. If $B_1,\ldots,B_N \in GL(V)$ let us say that $(B_1,\ldots,B_N)$ is \emph{irreducible} if no proper nonzero linear subspace of $V$ is invariant under every $B_i$, and otherwise let us say that $(B_1,\ldots,B_N)$ is \emph{reducible}. Let us also say that $(B_1,\ldots,B_N)$ is \emph{strongly irreducible} if no finite union of proper nonzero subspaces of $V$ is invariant under every $B_i$. If we define the \emph{orbit under $(B_1,\ldots,B_N)$} of a subspace $U\subseteq V$ to be the set $\{B_\iii U \colon \iii \in \Sigma_N^*\}$ then $(B_1,\ldots,B_N)$ is strongly irreducible if and only if the only subspaces of $V$ with finite orbit under $(B_1,\ldots,B_N)$ are $\{0\}$ and $V$. If $k \geq 1$ is any integer we let $\mathrm{Gr}_k(V)$ denote the set of all $k$-dimensional linear subspaces of $V$.

The following theorem treats the irreducible case of Theorem~\ref{th:intro1}:
\begin{theorem}\label{th:irr}
Let $N \geq 2$ and $k \geq 1$. For each $i=1,\ldots,k$ let $V_i$ be a real vector space with finite dimension $d_i$, let $(A^{(i)}_1,\ldots,A^{(i)}_N)\in GL(V_i)^N$ be irreducible and let $\beta_i>0$. Define a submultiplicative potential $\Phi\colon \Sigma_N^* \to (0,+\infty)$ by
\[\Phi(\iii)\coloneqq \prod_{i=1}^k \left\|A^{(i)}_\iii\right\|^{\beta_i}\]
for every $\iii \in \Sigma_N^*$. For each $i=1,\ldots,k$ let $\ell_i$ be the smallest nonzero integer such that there exists an $\ell_i$-dimensional subspace of $V_i$ which has finite orbit under $(A^{(i)}_1,\ldots,A^{(i)}_N)$. Then $t_i\coloneqq d_i/\ell_i $ is an integer for each $i=1,\ldots,k$. There exist an integer $p \coloneqq  (\max_i t_i)^{-1}\prod_{i=1}^k t_i$ and finite sets $\mathcal{W}_1,\ldots,\mathcal{W}_p \subset \prod_{i=1}^k \mathrm{Gr}_{\ell_i}(V_i)$ with the following properties. For each $j=1,\ldots,p$ the set $\mathcal{W}_j$ is invariant in the sense that $(A_\iii^{(i)}W_i)_{i=1}^k \in \mathcal{W}_j$ for every $(W_i)_{i=1}^k \in \mathcal{W}_j$ and $\iii \in\Sigma_N^*$. 
For each $j=1,\ldots,p$ the potential $\Phi^{j} \colon \Sigma_N^* \to (0,+\infty)$ defined by 
\[\Phi^{j}(\iii)\coloneqq \max_{(W_i)_{i=1}^k \in \mathcal{W}_j} \prod_{i=1}^k \left\|A^{(i)}_\iii|_{W_i}\right\|^{\beta_i}\]
tis submultiplicative and quasimultiplicative and has a unique equilibrium state which satisfies a Gibbs inequality with respect to $\Phi^{j}$. There exists a constant $\tau>0$ such that
\begin{equation}\label{eq:tau}\tau \Phi(\iii) \leq \max_{1 \leq j \leq p} \Phi^{j}(\iii) \leq \Phi(\iii) \end{equation}
for every $\iii \in \Sigma_N^*$.
If $\mu\in \mathcal{M}_\sigma$ is an ergodic equilibrium state of $\Phi$ then it is necessarily an ergodic equilibrium state of at least one of the potentials $\Phi^{j}$. In particular there are not more than $(\max_i t_i)^{-1}\prod_{i=1}^k t_i$ ergodic equilibrium states for $\Phi$ and every equilibrium state of $\Phi$ is fully supported. 
\end{theorem}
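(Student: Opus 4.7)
\emph{Structure of $\mathcal{X}_i$ and $\mathcal{W}_j$.} Let $\mathcal{X}_i$ denote the set of $\ell_i$-dimensional subspaces of $V_i$ with finite orbit under $(A^{(i)}_1,\ldots,A^{(i)}_N)$. By minimality of $\ell_i$, any two distinct elements of $\mathcal{X}_i$ intersect trivially, so $\sum_{W\in\mathcal{X}_i}W$ is direct; being nonzero and invariant under every $A^{(i)}_n$, irreducibility forces it to equal $V_i$, giving $V_i=\bigoplus_{W\in\mathcal{X}_i}W$. Hence $\ell_i\mid d_i$, $t_i=d_i/\ell_i$, and $|\mathcal{X}_i|=t_i$. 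Each $A^{(i)}_n$ permutes $\mathcal{X}_i$ by invertibility, and any semigroup-orbit on $\mathcal{X}_i$ spans $V_i$ with pairwise transverse elements, so has size exactly $t_i$; the action on $\mathcal{X}_i$ is therefore transitive. On the finite set $\prod_i\mathcal{X}_i$ the semigroup then acts through a subgroup $\Gamma$ of $\prod_i\mathrm{Sym}(\mathcal{X}_i)$ whose projection to each factor is transitive; every $\Gamma$-orbit consequently surjects onto each $\mathcal{X}_i$ and has size at least $\max_i t_i$, so a fibre-counting argument bounds the number of orbits by $p=(\max_i t_i)^{-1}\prod_i t_i$. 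I take the $\mathcal{W}_j$ to be these orbits.

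\emph{Submultiplicativity and quasimultiplicativity.} Submultiplicativity of $\Phi^{j}$ is immediate from $\|A^{(i)}_{\iii\jjj}|_{W_i}\|\leq\|A^{(i)}_\iii|_{A^{(i)}_\jjj W_i}\|\cdot\|A^{(i)}_\jjj|_{W_i}\|$ together with the invariance $(A^{(i)}_\jjj W_i)\in\mathcal{W}_j$. The critical step is quasimultiplicativity. Given $\iii,\kkk\in\Sigma_N^*$, pick maximisers $(W_i),(U_i)\in\mathcal{W}_j$ for $\Phi^{j}(\kkk),\Phi^{j}(\iii)$ respectively; transitivity of $\Gamma$ on $\mathcal{W}_j$ provides a word $\jjj$ of bounded length with $A^{(i)}_\jjj A^{(i)}_\kkk W_i=U_i$ for every $i$ simultaneously. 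One then needs to arrange that $A^{(i)}_\jjj$ sends the image of the top singular direction of $A^{(i)}_\kkk|_{W_i}$ close to the top singular direction of $A^{(i)}_\iii|_{U_i}$, uniformly across $i$. My plan is first to establish an auxiliary irreducibility statement: the semigroup $\{A^{(i)}_\jjj|_W : A^{(i)}_\jjj W=W\}$ acts irreducibly on each $W\in\mathcal{X}_i$, because any proper nonzero invariant subspace of $W$ would, under the finitely many coset-representative words sending $W$ to the other elements of $\mathcal{X}_i$, yield a finite orbit of subspaces of dimension strictly between $0$ and $\ell_i$, contradicting minimality of $\ell_i$. A multi-factor generalisation of Feng's connecting-word lemma, applied to these restricted irreducible actions, then produces a finite set $F\subset\Sigma_N^*$ and $\delta>0$ with $\Phi^{j}(\iii\jjj\kkk)\geq\delta\Phi^{j}(\iii)\Phi^{j}(\kkk)$. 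This is the step I expect to be the main obstacle. With quasimultiplicativity in hand, Proposition~\ref{pr:kare} supplies a unique, fully supported equilibrium state for each $\Phi^{j}$ satisfying a Gibbs inequality.

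\emph{Comparison and conclusion.} The inequality $\max_j\Phi^{j}\leq\Phi$ is trivial. Conversely, the decomposition $V_i=\bigoplus_{W\in\mathcal{X}_i}W$ together with the fixed angles between its summands gives $\|A^{(i)}_\iii\|\leq C_i\max_{W\in\mathcal{X}_i}\|A^{(i)}_\iii|_W\|$; multiplying across $i$ yields $\Phi(\iii)\leq\tau^{-1}\max_j\Phi^{j}(\iii)$, establishing \eqref{eq:tau}. Finally, let $\mu$ be an ergodic equilibrium state of $\Phi$. Kingman's subadditive ergodic theorem applied to $\Phi$ and to each $\Phi^{j}$, combined with the $\tau$-bound, yields $\Lambda(\Phi,\mu)=\max_j\Lambda(\Phi^{j},\mu)$; for $j^*$ attaining this maximum,
\[P(\Phi^{j^*})\geq h(\mu)+\Lambda(\Phi^{j^*},\mu)=h(\mu)+\Lambda(\Phi,\mu)=P(\Phi)\geq P(\Phi^{j^*}),\]
so equality holds throughout and $\mu$ is the unique equilibrium state of $\Phi^{j^*}$. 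Consequently $\Phi$ has at most $p$ ergodic equilibrium states, and each is fully supported.
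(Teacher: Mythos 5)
There is a genuine gap in the structural step. You define $\mathcal{X}_i$ to be the set of \emph{all} $\ell_i$-dimensional subspaces of $V_i$ with finite orbit, and assert that any two distinct elements intersect trivially, hence that $\sum_{W\in\mathcal{X}_i}W$ is a direct sum, hence that $V_i=\bigoplus_{W\in\mathcal{X}_i}W$ with $|\mathcal{X}_i|=t_i$. Two things go wrong. First, pairwise trivial intersection does not imply that a family of subspaces forms a direct sum: three distinct lines through the origin in $\mathbb{R}^2$ are pairwise transverse but are not independent. Second, the paper itself exhibits a counterexample to $|\mathcal{X}_i|=t_i$: in the remark immediately after the statement of the theorem, if $d_1=2$ and every $A_j^{(1)}$ is rotation by $2\pi/q$ for a fixed odd $q>1$, then \emph{every} line in $\mathbb{R}^2$ has finite orbit of size $q$, so $\ell_1=1$, $t_1=2$, yet $\mathcal{X}_1$ is uncountable (it is the whole projective line). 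Consequently there is no canonical finite set $\mathcal{X}_i$, no transitive $\Gamma$-action on $\prod_i\mathcal{X}_i$, and the orbit count $p$ cannot be read off the way you propose.

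Your auxiliary irreducibility claim has the same defect. To show that a subspace $W'\subsetneq W$ invariant under the ``stabilizer subsemigroup'' has finite orbit, you need to rewrite $A^{(i)}_\kkk W'$ as $A^{(i)}_{\jjj_m}\bigl((A^{(i)}_{\jjj_m})^{-1}A^{(i)}_\kkk\bigr)W'$ and argue that $(A^{(i)}_{\jjj_m})^{-1}A^{(i)}_\kkk$ stabilizes $W'$; but $(A^{(i)}_{\jjj_m})^{-1}A^{(i)}_\kkk$ lies only in the group generated by the semigroup, not in the semigroup itself, and your stabilizer-invariance hypothesis does not cover it. The paper's route addresses both problems at once: form the diagonal matrices $A_j=\bigoplus_i A_j^{(i)}$, let $G$ be the Zariski closure of the generated semigroup --- a point being that the Zariski closure of a subsemigroup of $GL$ is automatically a group --- and work with the identity component $G^\circ$, which has finite index. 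One then fixes a \emph{single} nonzero subspace $U_i$ of smallest dimension that is $\rho_i(G^\circ)$-invariant, shows that its $G$-orbit is finite (Lemma~\ref{le:ell-i-minimal}) and extracts from that orbit a subcollection giving a direct sum $V_i=\bigoplus_{j=1}^{t_i}U_i^j$ (Lemma~\ref{le:decomp}); there is no claim about the totality of finite-orbit subspaces. Quasimultiplicativity then comes from Theorem~\ref{pr:quam}, whose proof exploits precisely the Zariski-density of the semigroup in the group $G$ and irreducibility of $G^\circ$ as a variety, rather than the connecting-word lemma applied to stabilizer subsemigroups. Your final comparison step and the Kingman/variational argument are fine as far as they go, but they rest on the faulty decomposition, and the sets $\mathcal{W}_j$ you construct are not the ones the theorem needs.
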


We emphasise that Theorem~\ref{th:irr} does not assert that only $p$ finite sets which are invariant under the action of the matrices $A^{(i)}_j$ in the manner described can exist. Rather, it asserts only that there exist $p$ such sets which between them suffice to exhaust the supply of ergodic equilibrium states. For example, if $k=1$, $d_1=2$, $\beta_1=1$ and every $A^{(1)}_j$ is a matrix of rotation through $2\pi/q$ for some odd integer $q >1$ then the hypotheses of Theorem~\ref{th:irr} are satisfied, $p$ is equal to $1$, and uncountably many choices of the finite set $\mathcal{W}_1$ exist, but the measure of maximal entropy is the unique equilibrium state of $\Phi$. Since each candidate for the invariant set $\mathcal{W}_1$ has cardinality exactly $q$ this example also illustrates that the cardinality of each individual set $\mathcal{W}_j$ admits no a priori upper bound. We note that while every ergodic equilibrium state of $\Phi$ is an equilibrium state of one of the potentials $\Phi^{j}$ the converse should not in general be presumed to hold since in certain cases we may have $P(\Phi^{j})<P(\Phi)$ for particular values of $j$.

Let us review some special cases of Theorem~\ref{th:irr}. We note that if enough of the tuples $(A_1^{(i)},\ldots,A_N^{(i)})$ are strongly irreducible then there is a unique equilibrium state of $\Phi$:
\begin{corollary}\label{co:balt}
Let $N$, $k$, $V_i$, $\beta_i$,  $(A_1^{(i)},\ldots,A_N^{(i)})$ and $\Phi$ be as in Theorem~\ref{th:irr} and suppose that $(A_1^{(i)},\ldots,A_N^{(i)})$ is strongly irreducible for at least $k-1$ values of $i$. Then $\Phi$ is quasimultiplicative and has a unique equilibrium state. This equilibrium state satisfies a Gibbs inequality with respect to $\Phi$.
\end{corollary}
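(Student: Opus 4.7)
The plan is to deduce the corollary directly from Theorem \ref{th:irr} by showing that under the strong irreducibility hypothesis the integer $p$ produced there equals $1$. First I would check that strong irreducibility of $(A^{(i)}_1,\ldots,A^{(i)}_N)$ forces $\ell_i = d_i$. Indeed, if $U \subseteq V_i$ were a proper nonzero subspace with finite orbit under $(A^{(i)}_1,\ldots,A^{(i)}_N)$, then $\bigcup_{\iii \in \Sigma_N^*} A^{(i)}_\iii U$ would be a finite union of proper nonzero subspaces of $V_i$ invariant under every $A^{(i)}_j$, contradicting strong irreducibility. Since $V_i$ itself trivially has finite orbit and $\ell_i \ge 1$, this forces $\ell_i = d_i$ and hence $t_i = d_i/\ell_i = 1$ for each of the at least $k-1$ strongly irreducible indices.

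Therefore at most one index $i_0$ contributes a factor $t_{i_0} > 1$, and in either case $\prod_{i=1}^k t_i = \max_i t_i$, whence $p = (\max_i t_i)^{-1}\prod_{i=1}^k t_i = 1$. Theorem \ref{th:irr} now supplies a single invariant set $\mathcal{W}_1$, a single submultiplicative and quasimultiplicative potential $\Phi^{1}$, and the two-sided bound $\tau \Phi \le \Phi^{1} \le \Phi$ from \eqref{eq:tau}. Quasimultiplicativity of $\Phi^{1}$ transfers to $\Phi$ at once: if $\delta > 0$ and a finite set $F \subset \Sigma_N^*$ witness quasimultiplicativity of $\Phi^{1}$, then for all $\iii, \kkk \in \Sigma_N^*$,
\[
\max_{\jjj \in F} \Phi(\iii\jjj\kkk) \;\ge\; \max_{\jjj \in F} \Phi^{1}(\iii\jjj\kkk) \;\ge\; \delta\, \Phi^{1}(\iii)\Phi^{1}(\kkk) \;\ge\; \delta \tau^{2}\, \Phi(\iii)\Phi(\kkk),
\]
so $(\delta\tau^{2}, F)$ witnesses quasimultiplicativity of $\Phi$.

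Since $\Phi$ is submultiplicative by hypothesis and now also quasimultiplicative, Proposition \ref{pr:kare} applies directly to $\Phi$ and delivers a unique equilibrium state together with a Gibbs inequality with respect to $\Phi$. (Alternatively, since $\Phi$ and $\Phi^{1}$ are comparable up to the constant $\tau$, they automatically share a common equilibrium state by the remark following Theorem \ref{th:intro1}, and the Gibbs inequality with respect to $\Phi^{1}$ provided by Theorem \ref{th:irr} transfers to $\Phi$ by the same comparison together with $P(\Phi^{1}) = P(\Phi)$.) No genuine obstacle is anticipated; the entire corollary is a bookkeeping consequence of Theorem \ref{th:irr} once $p$ is pinned down, the only substantive point being the elementary observation that a subspace with finite orbit generates an invariant finite union.
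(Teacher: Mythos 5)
Your proof is correct and takes essentially the same approach as the paper: observe that strong irreducibility forces $\ell_i = d_i$ (hence $t_i = 1$) for at least $k-1$ indices, conclude $p = 1$, and transfer quasimultiplicativity from $\Phi^1$ to $\Phi$ via \eqref{eq:tau} before invoking Proposition~\ref{pr:kare}. The paper compresses all of this into two sentences but the underlying logic, including the observation that a finite orbit of a proper subspace yields a finite invariant union of subspaces, is identical.
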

\begin{proof}
We have $\ell_i=d_i$ for every $i$ such that $(A^{(i)}_1,\ldots,A^{(i)}_N)$ is strongly irreducible, so $t_i=1$ for at least $k-1$ values of $i$ and it follows immediately that $p=1$. We deduce from \eqref{eq:tau} that $\Phi$ is quasimultiplicative and has a unique equilibrium state, and that this equilibrium state satisfies a Gibbs inequality with respect to $\Phi$.\end{proof}
Taking $k=1$ and $\beta_1\coloneqq s$ in the above corollary yields the invertible case of a result of Feng and K\"aenm\"aki \cite[Proposition 1.2]{FeKa11}:
\begin{corollary}\label{co:ral}
Let $A_1,\ldots,A_N \in GL_d(\mathbb{R})$ and let $s>0$, and suppose that $(A_1,\ldots,A_N)$ is irreducible. Then there exists a unique equilibrium state for $(A_1,\ldots,A_N)$ with respect to the potential $\Phi(\iii)\coloneqq \|A_\iii\|^s$, and this equilibrium state satisfies a Gibbs inequality with respect to $\Phi$.
\end{corollary}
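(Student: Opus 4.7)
The plan is to deduce the statement as an essentially immediate specialization of Corollary~\ref{co:balt}. We set $k = 1$, $V_1 := \mathbb{R}^d$, $A^{(1)}_j := A_j$, and $\beta_1 := s$; then the potential appearing in Corollary~\ref{co:balt} reduces precisely to $\Phi(\iii) = \|A_\iii\|^s$, and the irreducibility hypothesis of Corollary~\ref{co:balt} (inherited from Theorem~\ref{th:irr}), which demands that each tuple $(A_1^{(i)}, \ldots, A_N^{(i)})$ be irreducible, becomes in the single-tuple case $k=1$ exactly the assumption that $(A_1, \ldots, A_N)$ is irreducible, which we have by hypothesis. Since the $A_i$ are assumed invertible we also have $\Phi(\iii) > 0$ for every $\iii \in \Sigma_N^*$, so the Gibbs inequality will automatically yield full support.

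The only point that is worth checking is the additional \emph{strong} irreducibility hypothesis of Corollary~\ref{co:balt}, which requires that $(A_1^{(i)}, \ldots, A_N^{(i)})$ be strongly irreducible for at least $k-1$ values of $i$. With $k = 1$ this asks for strong irreducibility for at least $0$ values of $i$ and is therefore vacuous. Consequently Corollary~\ref{co:balt} applies and delivers in one stroke the quasimultiplicativity of $\Phi$, the existence and uniqueness of the equilibrium state, and the Gibbs inequality with respect to $\Phi$, which is exactly the content of the corollary to be proved. There is no genuine obstacle here, because all the substantive work is performed upstream in Theorem~\ref{th:irr} (and its derivative Corollary~\ref{co:balt}); the statement is recorded separately mainly to highlight that it recovers the invertible case of the two-dimensional result of Feng and K\"aenm\"aki~\cite[Proposition~1.2]{FeKa11} as a trivial special case of the more general framework developed here.
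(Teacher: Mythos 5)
Your proposal is correct and coincides exactly with the paper's own argument: the paper also deduces Corollary~\ref{co:ral} by specialising Corollary~\ref{co:balt} to $k=1$ and $\beta_1\coloneqq s$, with the strong irreducibility hypothesis of Corollary~\ref{co:balt} becoming vacuous. Nothing further is needed.
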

One may also easily obtain a theorem of K\"aenm\"aki and the second named author \cite[Theorem~C]{KaMo16}:
\begin{corollary}
Let $A_1,\ldots,A_N \in GL_d(\mathbb{R})$ and let $0< \ell < s<\ell+1<d$ where $\ell \in \mathbb{Z}$. Suppose that both $(A_1^{\wedge \ell},\ldots,A_N^{\wedge \ell})$ and $(A_1^{\wedge (\ell+1)},\ldots,A_N^{\wedge (\ell+1)})$ are irreducible and that one of them is strongly irreducible. Then there exists a unique equilibrium state for $(A_1,\ldots,A_N)$ with respect to the potential $\Phi(\iii)\coloneqq \varphi^s(A_\iii)$, and this equilibrium state satisfies a Gibbs inequality with respect to $\Phi$.
\end{corollary}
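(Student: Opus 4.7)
The plan is to recognize the final corollary as an immediate specialization of Corollary~\ref{co:balt}, using the exterior-algebraic formulation of $\varphi^s$ developed just before Theorem~\ref{th:intro1}. Recall the identity
\[
\varphi^s(A)=\left\|A^{\wedge \lfloor s\rfloor}\right\|^{1+\lfloor s\rfloor-s}\left\|A^{\wedge\lceil s\rceil}\right\|^{s-\lfloor s\rfloor},
\]
which, because $\ell<s<\ell+1$, specializes to $\varphi^s(A)=\|A^{\wedge\ell}\|^{\ell+1-s}\|A^{\wedge(\ell+1)}\|^{s-\ell}$.

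The first step is to set up the data for Corollary~\ref{co:balt} with $k=2$. Take $V_1\coloneqq\WEDGE^{\ell}\mathbb{R}^d$, $V_2\coloneqq\WEDGE^{\ell+1}\mathbb{R}^d$, $A^{(1)}_j\coloneqq A_j^{\wedge\ell}\in GL(V_1)$, $A^{(2)}_j\coloneqq A_j^{\wedge(\ell+1)}\in GL(V_2)$, $\beta_1\coloneqq\ell+1-s>0$ and $\beta_2\coloneqq s-\ell>0$. The multiplicativity property $(AB)^{\wedge k}=A^{\wedge k}B^{\wedge k}$ gives $A^{(i)}_\iii=A_\iii^{\wedge\ell}$ and $A_\iii^{\wedge(\ell+1)}$, respectively, so the potential produced by Theorem~\ref{th:irr}/Corollary~\ref{co:balt} is exactly
\[
\Phi(\iii)=\left\|A^{(1)}_\iii\right\|^{\beta_1}\left\|A^{(2)}_\iii\right\|^{\beta_2}=\varphi^s(A_\iii),
\]
as required.

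The second step is to verify the hypotheses of Corollary~\ref{co:balt}: both tuples $(A_1^{(i)},\ldots,A_N^{(i)})$ are irreducible in $V_i$ by assumption, and at least one of them (hence at least $k-1=1$ of them) is strongly irreducible by assumption. Corollary~\ref{co:balt} then yields that $\Phi$ is quasimultiplicative, that it admits a unique equilibrium state, and that this equilibrium state satisfies a Gibbs inequality with respect to $\Phi$, which is exactly the desired conclusion.

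There is essentially no obstacle here, since all the substantive work has already been done in Theorem~\ref{th:irr} and Corollary~\ref{co:balt}; the only thing to check is that the hypotheses $\ell<s<\ell+1<d$ make $\beta_1,\beta_2$ genuinely positive and ensure that both exterior powers act on nonzero spaces, both of which are immediate. Thus the proof reduces to a single line invoking Corollary~\ref{co:balt} after the identifications above.
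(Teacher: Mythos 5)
Your proof is correct and follows exactly the same route as the paper's: both apply Corollary~\ref{co:balt} with $k=2$, $V_1=\WEDGE^\ell\mathbb{R}^d$, $V_2=\WEDGE^{\ell+1}\mathbb{R}^d$, $A_j^{(i)}=A_j^{\wedge\ell}$ or $A_j^{\wedge(\ell+1)}$, and exponents $\beta_1=\ell+1-s$, $\beta_2=s-\ell$. The only difference is that you spell out the bookkeeping (positivity of the $\beta_i$, the identity $\Phi(\iii)=\varphi^s(A_\iii)$) which the paper leaves implicit.
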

\begin{proof}
Apply Corollary~\ref{co:balt} with $k:=2$, $V_1\coloneqq \WEDGE^\ell\mathbb{R}^d$, $V_2\coloneqq \WEDGE^{\ell+1}\mathbb{R}^d$, $A^{(1)}_j\coloneqq A_j^{\wedge \ell}$ and $A^{(2)}_j\coloneqq A_j^{\wedge(\ell+1)}$ for each $j=1,\ldots,N$, $\beta_1\coloneqq \ell+1-s$ and $\beta_2\coloneqq s-\ell$.
\end{proof}

Let us now state our second main result, which allows Theorem~\ref{th:irr} to be extended beyond the case where each $(A_1^{(i)},\ldots,A^{(i)}_N)$ is irreducible.
Clearly the following result implies Theorem~\ref{th:intro1}.
\begin{theorem}\label{th:force-diagonal}
Let $N \geq 2$ and $k \geq 1$. For each $i=1,\ldots,k$ let $V_i$ be a real vector space with finite dimension $d_i$, let $(A^{(i)}_1,\ldots,A^{(i)}_N)\in GL(V_i)^N$ and let $\beta_i>0$. Define a potential $\Phi \colon \Sigma_N^* \to (0,+\infty)$ by
\[\Phi(\iii)\coloneqq \prod_{i=1}^k \left\|A^{(i)}_\iii\right\|^{\beta_i}\]
for every $\iii \in \Sigma_N^*$.  Then for each $i=1,\ldots,k$ there exist a unique integer $n_i$ and a basis for $V_i$ in which the matrix representation
\begin{equation}\label{eq:mat}A^{(i)}_j =
    \begin{pmatrix}
      A_j^{(i,1)} & * & \cdots & *\\
      0           & A_j^{(i,2)} & \cdots & * \\
      \vdots      & \vdots      & \ddots & \vdots \\
      0           & 0           & \cdots & A_j^{(i,n_i)}
    \end{pmatrix}
\end{equation}
holds for every $j=1,\ldots,N$, where for each fixed $i$ and $r$ the $N$-tuple $(A_1^{(i,r)},\ldots,A_N^{(i,r)})$ consists of square matrices of the same dimension and is irreducible.  If $\mu$ is an ergodic equilibrium state of $\Phi$ then for each $i=1,\ldots,k$ there exists an integer $r_i \in \{1,\ldots,n_i\}$ such that $\mu$ is an equilibrium state of the potential $\Phi^{(r_1,\ldots,r_k)}\colon \Sigma_N^* \to (0,+\infty)$ defined by
\[\Phi^{(r_1,\ldots,r_k)}(\iii)\coloneqq \prod_{i=1}^k \left\|A_\iii^{(i,r_i)}\right\|^{\beta_i}.\]
Every equilibrium state of $\Phi$ is fully supported and the number of ergodic equilibrium states of $\Phi$ is not greater than $(\min_{1 \leq i \leq k} n_i/d_i)\prod_{i=1}^k d_i$. In particular the number of ergodic equilibrium states is not greater than $\prod_{i=1}^k d_i$, and if $\Phi$ has exactly $\prod_{i=1}^k d_i$ ergodic equilibrium states then for every $i=1,\ldots,k$ there exists a basis for $V_i$ in which the matrices $A_1^{(i)},\ldots,A_N^{(i)}$ are all upper triangular.
\end{theorem}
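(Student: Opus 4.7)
The overall plan is to reduce the possibly reducible setting to the irreducible setting already established in Theorem~\ref{th:irr}, applied to a finite family of ``diagonal block'' subpotentials $\Phi^{(r_1,\ldots,r_k)}$, and then to sum the resulting bounds.

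\emph{Block upper triangular form.} First I would produce the decomposition~\eqref{eq:mat}. For each $i$ an iterative procedure---pick a minimal common nonzero invariant subspace of $A_1^{(i)},\ldots,A_N^{(i)}$, then a minimal invariant subspace properly containing it, and so on---yields a maximal flag $V_i = W_i^{(0)} \supsetneq W_i^{(1)} \supsetneq \cdots \supsetneq W_i^{(n_i)} = \{0\}$ of simultaneously invariant subspaces whose successive quotients carry irreducible representations $A_j^{(i,r)}$ on $W_i^{(r-1)}/W_i^{(r)}$; any basis of $V_i$ compatible with this flag realises \eqref{eq:mat}. Uniqueness of $n_i$ and of the unordered multiset of block dimensions $d_i^{(r)}$ is the Jordan--H\"older theorem applied to $V_i$ viewed as a module over the semigroup algebra $\R[\Sigma_N^*]$ in which $\iii$ acts as $A_\iii^{(i)}$.

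\emph{Reducing an ergodic equilibrium state to a diagonal subpotential.} Since block upper triangular matrices compose blockwise, the $(r,r)$-block of $A_\iii^{(i)}$ equals $A_\iii^{(i,r)}$; the quotient-norm bound therefore gives $\|A_\iii^{(i,r)}\| \le C \|A_\iii^{(i)}\|$ for an $\iii$-independent constant $C$, hence $\Phi^{(r_1,\ldots,r_k)}(\iii) \le C'\Phi(\iii)$ and $P(\Phi^{(r_1,\ldots,r_k)}) \le P(\Phi)$. The hard part will be the matching comparison of asymptotic averages. For ergodic $\mu$, Kingman's subadditive ergodic theorem gives the $\mu$-a.e.\ limits $\lim_n n^{-1}\log\|A_\iii^{(i)}\| = \lambda_i(\mu)$ and $\lim_n n^{-1}\log\|A_\iii^{(i,r)}\| = \lambda_i^{(r)}(\mu)$; the pointwise bound above shows $\lambda_i(\mu) \ge \max_r \lambda_i^{(r)}(\mu)$, and the reverse inequality I would obtain by expanding $A_\iii^{(i)}$ as a sum over at most polynomially-many-in-$|\iii|$ terms indexed by non-decreasing ``block paths'' $a_0 \le a_1 \le \cdots \le a_{|\iii|}$ in $\{1,\ldots,n_i\}$: each such term decomposes into at most $n_i$ consecutive runs of iterates of a single diagonal block, separated by uniformly bounded off-diagonal factors, so blockwise submultiplicativity combined with pointwise $\mu$-a.e.\ convergence along each run (via $\sigma$-invariance of $\mu$) forces the growth rate of every term to be at most $\max_r \lambda_i^{(r)}(\mu)$, the polynomial prefactor vanishing after $n^{-1}\log$. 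Choosing $r_i = r_i(\mu)$ realising the maximum for each $i$ then yields $\Lambda(\Phi,\mu) = \Lambda(\Phi^{(r_1,\ldots,r_k)},\mu)$, and combined with $P(\Phi^{(r_1,\ldots,r_k)}) \le P(\Phi)$, any equilibrium state $\mu$ of $\Phi$ satisfies $P(\Phi^{(r_1,\ldots,r_k)}) = P(\Phi)$ and is itself an ergodic equilibrium state of $\Phi^{(r_1,\ldots,r_k)}$.

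\emph{Assembly.} Each $\Phi^{(r_1,\ldots,r_k)}$ has irreducible diagonal inputs, so Theorem~\ref{th:irr} applies, bounds its ergodic equilibrium states by $(\max_i t_i^{(r)})^{-1}\prod_i t_i^{(r)}$ with $t_i^{(r)} = d_i^{(r_i)}/\ell_i^{(r_i)} \le d_i^{(r_i)}$, and guarantees full support; full support for every ergodic equilibrium state of $\Phi$ follows. For the global count, I would fix any $i_0 \in \{1,\ldots,k\}$ and use $(\max_i t_i^{(r)})^{-1} \le 1/t_{i_0}^{(r)}$ together with $t_i^{(r)} \le d_i^{(r_i)}$ to estimate
\[
\sum_{(r_1,\ldots,r_k) \in \prod_i \{1,\ldots,n_i\}} \frac{\prod_i t_i^{(r)}}{\max_i t_i^{(r)}}
\;\le\; \sum_{(r_1,\ldots,r_k)} \prod_{i\ne i_0} d_i^{(r_i)}
\;=\; n_{i_0}\prod_{i\ne i_0} d_i
\;=\; \frac{n_{i_0}}{d_{i_0}}\prod_i d_i;
\]
taking the minimum over $i_0$ delivers the claimed bound $(\min_i n_i/d_i)\prod_i d_i \le \prod_i d_i$. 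Equality of the number of ergodic equilibrium states with $\prod_i d_i$ forces $\min_i n_i/d_i \ge 1$, hence $n_i = d_i$ for every $i$ (since always $n_i \le d_i$), which makes each diagonal block $1\times 1$ and therefore every $A_j^{(i)}$ upper triangular in the chosen basis.
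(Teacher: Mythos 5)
Your proposal is correct and follows essentially the same strategy as the paper's proof: decompose each $A^{(i)}_j$ into block upper triangular form with irreducible diagonal blocks, use the fact that the top Lyapunov exponent of a block-triangular cocycle is the maximum of the exponents of the diagonal blocks to show any ergodic equilibrium state of $\Phi$ is an equilibrium state of some diagonal subpotential $\Phi^{(r_1,\ldots,r_k)}$, apply Theorem~\ref{th:irr} to each of these, and sum the bounds. The only cosmetic differences are that you derive the decomposition via a maximal flag and Jordan--H\"older (the paper takes $n_i$ maximal and argues by contradiction), and that you sketch the proof of $\Lambda(A^{(i)},\mu)=\max_r\Lambda(A^{(i,r)},\mu)$ (which would need a little more care around uniformity of convergence along the runs) whereas the paper simply cites Arnold's book for this standard fact.
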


The remainder of this article is structured as follows. In the following section we give an overview of the facts from algebraic geometry which will be required for the proof of Theorem~\ref{th:irr}. In  \S\ref{se:four} we prove a largely algebraic result, Theorem~\ref{pr:quam}, which will be needed in order to demonstrate that the potentials $\Phi^{j}$ defined in Theorem~\ref{th:irr} are quasimultiplicative. In \S\ref{se:five} we apply Theorem~\ref{pr:quam} to prove Theorem~\ref{th:intro1}, and the proof of Theorem~\ref{th:force-diagonal} is given in  \S\ref{se:six}. The optimality or otherwise of the bounds given for the number of ergodic equilibrium states in Theorems~\ref{th:intro2}, \ref{th:irr} and \ref{th:force-diagonal} is investigated in \S\ref{se:seven}, and a possible extension of Corollary \ref{co:xsapples} is discussed in \S\ref{se:mon}.


\section{Review of relevant facts from algebraic geometry}

The proof of Theorem~\ref{th:irr} relies substantially on ideas from elementary algebraic geometry as applied to groups of real invertible linear transformations. Since the majority of researchers in fractal geometry are unlikely to be familiar with these ideas, let us briefly outline the ideas to be employed before beginning the proof of Theorem~\ref{th:irr}. For information relating to affine algebraic varieties and the Zariski topology we refer to \cite[\S2.1.1--\S2.1.4]{OnVi90}; for information relating to real algebraic groups our reference is \cite[\S3.1.1]{OnVi90}.

\subsection{Affine algebraic varieties and the Zariski topology}\label{ss:varieties}

If $V_1,V_2$ are finite-dimensional real vector spaces, a function $p \colon V_1 \to V_2$ will be called a \emph{polynomial} if for some (then for any) bases $u_1,\ldots,u_{d_1}$ of $V_1$ and $v_1,\ldots,v_{d_2}$ of $V_2$ we may write $p$ in the form $p(\sum_{i=1}^{d_1} a_iu_i)=\sum_{i=1}^{d_2} q_i(a_1,\ldots,a_{d_1})v_i$ where each function $q_i \colon \mathbb{R}^{d_1}\to \mathbb{R}$ is a polynomial in the usual sense. For the purposes of this article an \emph{affine algebraic variety} will be any subset $Z$ of a finite-dimensional real vector space $V$ which is equal to the common zero locus of a set of polynomials $V\to \mathbb{R}$. Without loss of generality this set of polynomials may be taken to be finite. The \emph{Zariski topology} on a finite-dimensional real vector space $V$ is defined by declaring a set to be closed if and only if it is an affine algebraic variety. If $Z\subseteq V$ is any affine algebraic variety then we define the Zariski topology on $Z$ to be the subspace topology which it inherits from the Zariski topology on $V$. The Zariski topology on an affine algebraic variety $Z$ satisfies the \emph{descending chain condition}: if $(Z_n)_{n=1}^\infty$ is a sequence of Zariski-closed subsets of $Z$ such that $Z_{n+1}\subseteq Z_n$ for every $n \geq 1$, then $(Z_n)$ must be eventually constant. 

If $Z_1\subseteq V_1$ and $Z_2\subseteq V_2$ are affine algebraic varieties then a function $f \colon Z_1 \to Z_2$ is called a \emph{morphism} if there exists a polynomial $p \colon V_1\to V_2$ such that $p(Z_1)\subseteq Z_2$ and $p|_{Z_1}=f$. Every morphism $Z_1 \to Z_2$ is a continuous function with respect to the Zariski topologies on $Z_1$ and $Z_2$. The \emph{product variety} $Z_1 \times Z_2$ is defined by identifying $Z_1 \times Z_2$ with the corresponding subset of $V_1 \oplus V_2 \simeq \mathbb{R}^{d_1+d_2}$ and equipping it with the Zariski topology which it inherits from $V_1\oplus V_2$. We caution the reader that this topology (called the \emph{Zariski product topology}) is distinct from the ordinary product of the Zariski topologies on $Z_1$ and $Z_2$, having more open sets.

A nonempty Zariski-closed set is normally called \emph{irreducible} if it cannot be written as a finite union of proper nonempty Zariski-closed subsets. To emphasise the difference between this notion of irreducibility and our use of the word to refer to sets of linear maps which do not preserve a common subspace, we will say that a Zariski-closed set $Z$ is an \emph{irreducible variety} if it is not equal to a finite union of Zariski-closed nonempty proper subsets of itself. An important fact which will be used in this article is that every nonempty Zariski-open subset of an irreducible variety is Zariski dense: to see this note that if $U \subset Z=\overline{Z}$ is nonempty, open and not dense then $Z=\overline{U} \cup (Z\setminus U)$ expresses $Z$ as a union of two Zariski-closed nonempty proper subsets of $Z$ and therefore $Z$ is not an irreducible variety. (Here closures are of course taken in the Zariski topology.)

 Every Zariski-closed set $Z$ may be written in a unique way as a finite union of irreducible varieties $Z_1,\ldots,Z_k$ each of which is maximal in the sense that it is not properly contained in any irreducible subvariety of $Z$. The sets $Z_i$ are referred to as the \emph{irreducible components} of $Z$. In general the irreducible components of a Zariski-closed set may intersect (for example, if $Z$ is the union of two overlapping circles in $\mathbb{R}^2$). It is not difficult to check that if $f \colon Z \to Z$ is a homeomorphism in the Zariski topology then $f(Z_i)$ is also an irreducible component of $Z$ for every irreducible component $Z_i$.

\subsection{Real algebraic groups}\label{ss:alg-groups}

A \emph{real algebraic group} is a group $G$ endowed with the structure of a real affine algebraic variety such that the map $g \mapsto g^{-1}$ defines a morphism $G \to G$ and the map $(g_1,g_2)\mapsto g_1g_2$ defines a morphism $G \times G \to G$. If $G$ is a real algebraic group and $g_0 \in G$ is fixed then the maps $g \mapsto g_0g$ and $g \mapsto gg_0$ define Zariski homeomorphisms of $G$, a fact which will be used repeatedly in the remainder of this article.

The real algebraic groups considered in this article will all arise as Zariski-closed subgroups of $GL(V)$ where $V$ is some finite-dimensional real vector space. Importantly $GL(V)$ itself has the structure of a real algebraic group, which does not arise directly from its definition but via the following contrivance. Let us identify $V$ with $\mathbb{R}^d$. We identify $GL_d(\mathbb{R})$ with the set
\[\left\{\begin{pmatrix}A&0\\ 0&x\end{pmatrix} \in M_{d+1}(\mathbb{R}) \colon A \in M_d(\mathbb{R}), x \in \mathbb{R}\text{ and }x\cdot(\det A)=1\right\}.\]
This defines an affine subvariety of $M_{d+1}(\mathbb{R})$ which is a real algebraic group with respect to the standard operations of matrix multiplication and inversion, and there is a group isomorphism from $GL_d(\mathbb{R})$ to the above group given by $A \mapsto A \oplus (\det A)^{-1}$. Via this identification a function $p \colon GL_{d_1}(\mathbb{R}) \to \mathbb{R}^{d_2}$ is thus considered to be a polynomial if and only if each entry of the vector $p(A)$ is a polynomial function of the entries of the matrix $A$ and of the variable $x=1/\det A$. Thus $GL_d(\mathbb{R})$ equipped with this structure of polynomial functions meets the definition of a real algebraic group. We note that the polynomial structure on $GL(V)$ is independent of the basis used in identifying $V$ with $\mathbb{R}^d$.  
It is not difficult to see that any Zariski-closed subgroup of $GL(V)$ is also a real algebraic group. For concreteness the reader may find it helpful to know that every group of this type is a Lie group, although this fact will not be used.

An important principle in this work is that the Zariski closure of any \emph{subsemigroup} of $GL_d(\mathbb{R})$ is a real algebraic group. To see this one first shows that the Zariski closure of a subsemigroup of $GL_d(\mathbb{R})$ is also a semigroup. The descending chain condition now implies that a Zariski-closed subsemigroup $H$ of $GL_d(\mathbb{R})$ is a group by the following argument: if $g \in H$ then the sequence of sets $g^{-n}H$ forms a descending chain of closed sets which eventually terminates, so $g^{-n}H=g^{-n-1}H$ for some $n$. Hence $H=g^{-1}H$ and therefore $\mathrm{id} =g^{-1}g \in g^{-1}H=H$. Hence $g^{-1}=g^{-1}\cdot\mathrm{id} \in g^{-1}H=H$. It follows that $H$ is a group as claimed. 

We recall that a homomorphism $\rho$ from an abstract group $G$ to $GL(V)$ is called an \emph{irreducible representation} if there does not exist a proper nonzero linear subspace of $V$ which is preserved by every element of $\rho(G)$. A homomorphism from a real algebraic group $G \leq GL(V)$ to $GL(W)$ is called \emph{regular} if it is a morphism of affine algebraic varieties in addition to being a homomorphism.

\subsection{Components of real algebraic groups}\label{ss:comp}

One may show that if $G$ is a real algebraic group then exactly one of its irreducible components contains the identity, and this component is denoted $G^\circ$. For every $g \in G$ the left and right cosets $gG^\circ$ and $G^\circ g$ are also irreducible components of $G$, a fact which follows from the fact that left and right multiplication by $g$ induce homeomorphisms of $G$ in the Zariski topology. One may show that $G^\circ$ is a subgroup of $G$. Since for each $g \in G$ the set $gG^\circ g^{-1}$ is an irreducible component of $G$ which contains the identity, it equals $G^\circ$, and therefore $G^\circ $ is a normal subgroup of $G$. Since $G$ has finitely many irreducible components the quotient group $G/G^\circ$ is finite.

If $G_1,G_2$ were distinct irreducible components of $G$ which shared a common element $g \in G_1 \cap G_2$ then $g^{-1}G_1$ and $g^{-1}G_2$ would be distinct irreducible components which contain the identity, contradicting the uniqueness of $G^\circ$. It follows that the distinct irreducible components of $G$ are pairwise disjoint, and since they are finite in number each irreducible component of $G$ must be Zariski open as well as Zariski closed.

\section{A  quasimultiplicativity result}\label{se:four}

Before commencing the proof of Theorem~\ref{th:irr} we prove the following result which will be used to establish the submultiplicativity and quasimultiplicativity of the potentials $\Phi^{j}$.
\begin{theorem}\label{pr:quam}
Let $V$ be a finite-dimensional real vector space, $G\leq GL(V)$ a real algebraic group and $H\subset G$ a subsemigroup which is Zariski-dense in $G$. Let $k \geq 1$. For each $i=1,\ldots,k$ let $V_i$ be a finite-dimensional real vector space, $\rho_i \colon G \to GL(V_i)$ a regular irreducible representation, and $\beta_i>0$ a real number. For each $i$ let $U_i\subseteq V_i$ be a nonzero vector space which is preserved by $\rho_i(G^\circ)$ and has the smallest dimension of any such subspace. Define $\ell_i\coloneqq \dim U_i$ for each $i=1,\ldots,k$. Define
\[\mathcal{W}\coloneqq \left\{(\rho_i(g)U_i)_{i=1}^k \colon g \in G\right\}\subset \prod_{i=1}^k \mathrm{Gr}_{\ell_i}(V_i).\]
Then $\mathcal{W}$ is finite. Define also
\[\phi(g)\coloneqq \max_{(W_i)_{i=1}^k \in \mathcal{W}}  \prod_{i=1}^k \|\rho_i(g)|_{W_i}\|^{\beta_i}\]
for every $g \in G$. Then $\phi(g_1g_2)\leq \phi(g_1)\phi(g_2)$ for every $g_1,g_2 \in G$. Furthermore there exist $\delta>0$ and a finite set $H'\subset H$ such that for every $g_1,g_2 \in G$ we may find $h \in H'$ such that $\phi(g_1hg_2)\geq \delta \phi(g_1)\phi(g_2)$.
\end{theorem}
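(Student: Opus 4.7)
The plan is to verify the three assertions in the theorem statement in order—finiteness of $\mathcal{W}$, submultiplicativity of $\phi$, and quasimultiplicativity—reserving almost all the work for the last.

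The first two parts will be short. Each $U_i$ is preserved by $\rho_i(G^\circ)$, and since $G^\circ$ has finite index in $G$ (see Section~\ref{ss:comp}), the orbit $\{\rho_i(g)U_i : g \in G\}$ factors through the finite quotient $G/G^\circ$; the diagonal assignment $g \mapsto (\rho_i(g)U_i)_i$ also factors through $G/G^\circ$, so $\mathcal{W}$ is finite. For submultiplicativity, $\mathcal{W}$ is invariant under the diagonal $G$-action, so for any $(W_i) \in \mathcal{W}$ one has $(\rho_i(g_2)W_i)_i \in \mathcal{W}$; the factorisation $\rho_i(g_1g_2)|_{W_i} = \rho_i(g_1)|_{\rho_i(g_2)W_i} \circ \rho_i(g_2)|_{W_i}$ combined with operator-norm submultiplicativity and then maximisation over $\mathcal{W}$ yields $\phi(g_1g_2) \leq \phi(g_1)\phi(g_2)$.

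For quasimultiplicativity I would first reduce to a pairing inequality in the $V_i$. Fixing $g_1, g_2 \in G$, pick $(W_i^{(j)})_i \in \mathcal{W}$ realising $\phi(g_j)$ for $j = 1, 2$, and unit vectors $w_i \in W_i^{(1)}$, $v_i \in W_i^{(2)}$ that attain $\|\rho_i(g_1)|_{W_i^{(1)}}\|$ and $\|\rho_i(g_2)|_{W_i^{(2)}}\|$ respectively. Set $u_i := \rho_i(g_1)w_i/\|\rho_i(g_1)w_i\|$, $z_i := \rho_i(g_2)v_i/\|\rho_i(g_2)v_i\|$, and $\xi_i := \rho_i(g_1)^{\top} u_i$; the computation $\langle \xi_i, w_i\rangle = \|\rho_i(g_1)|_{W_i^{(1)}}\|$ gives $\|\xi_i\| \geq \|\rho_i(g_1)|_{W_i^{(1)}}\|$ by Cauchy--Schwarz. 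Writing $\hat\xi_i := \xi_i/\|\xi_i\|$, a direct manipulation of the identity $\|\rho_i(g_1 h g_2) v_i\| = \|\rho_i(g_2) v_i\| \cdot \|\rho_i(g_1 h) z_i\|$ together with $\|\rho_i(g_1 h) z_i\| \geq |\langle \rho_i(h) z_i, \xi_i\rangle|$ produces
\[\phi(g_1 h g_2) \;\geq\; \phi(g_1)\phi(g_2) \prod_{i=1}^k \bigl|\langle \rho_i(h) z_i, \hat\xi_i\rangle\bigr|^{\beta_i} \qquad \text{for every } h \in G.\]
The task then reduces to producing a finite set $H' \subset H$ and a constant $\delta > 0$ such that, for every configuration of unit vectors $(z_i, \hat\xi_i)_i$ arising in this way, some $h \in H'$ makes the product of pairings at least~$\delta$.

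The remaining argument will combine algebraic density with analytic compactness. For each $\mathbf{p} = (\bar z_i, \bar\xi_i)_i$ in the compact set $K := \prod_i (\mathbb{P}(V_i) \times \mathbb{P}(V_i))$, consider the polynomial $P_{\mathbf p}(h) := \prod_i \langle \rho_i(h) z_i, \xi_i\rangle$ on $G$. Provided $P_{\mathbf p}$ is not identically zero on $G$, its nonvanishing locus is a nonempty Zariski-open subset and hence meets the Zariski-dense subsemigroup $H$, supplying $h(\mathbf p) \in H$ with $P_{\mathbf p}(h(\mathbf p)) \neq 0$; continuity of the pairings in the usual topology together with compactness of $K$ then extract a finite subcover and a uniform $\delta > 0$. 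The hard part is the nonvanishing of $P_{\mathbf p}$ on $G$ when $G$ is disconnected: on each irreducible component $G^\circ g_c$ the coordinate ring is an integral domain, so $P_{\mathbf p}$ vanishes on that component iff a single factor $\langle \rho_i(\cdot) z_i, \xi_i\rangle$ does, which occurs precisely when $\xi_i$ is orthogonal to the $\rho_i(G^\circ)$-invariant subspace generated by $\rho_i(g_c) z_i$. To rule out the possibility that different components are killed by different factors, I would exploit that $z_i$ lies in an element of the orbit of $U_i$—which is $\rho_i(G^\circ)$-irreducible by the minimality of $U_i$—together with the $G$-irreducibility of each $\rho_i$.
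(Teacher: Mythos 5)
Your treatment of the finiteness of $\mathcal{W}$ and the submultiplicativity of $\phi$ is correct and matches the paper's Lemma~\ref{le:loccon} and the opening paragraph of the paper's proof. Your reduction to the pairing inequality
\[
\phi(g_1hg_2)\;\geq\;\phi(g_1)\phi(g_2)\prod_{i=1}^k\bigl|\langle\rho_i(h)z_i,\hat\xi_i\rangle\bigr|^{\beta_i}
\]
is also a legitimate and rather slick observation which has no analogue in the paper, where the reduction is instead to a statement about linear maps $C_i\colon U_i\to W_i'$, $D_i\colon W_i\to U_i$.

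The gap is in the final step, and it is precisely at the point you flag as the ``hard part''. You propose to run a compactness argument over the whole of $K=\prod_i(\P(V_i)\times\P(V_i))$, relying on the claim that $P_{\mathbf p}(h)=\prod_i\langle\rho_i(h)z_i,\xi_i\rangle$ fails to be identically zero on $G$ for every $\mathbf p\in K$. That claim is false in general. As you yourself observe, $P_{\mathbf p}$ vanishes on a component $G^\circ g_c$ precisely when some single factor $\langle\rho_i(\cdot)z_i,\xi_i\rangle$ does, i.e.\ when $\xi_i\perp\rho_i(g_c)W_i'$, where $W_i'$ is the orbit element containing $z_i$. Irreducibility of $\rho_i$ only guarantees that, for each fixed $i$, factor $i$ survives on \emph{some} component; it does nothing to prevent, when $k\geq 2$, a situation in which factor $1$ kills one component, factor $2$ kills the remaining component, and $P_{\mathbf p}$ therefore vanishes on all of $G$. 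The extra constraints on $(z_i,\hat\xi_i)$ coming from the construction (that $\hat\xi_i$ is proportional to $\rho_i(g_1)^\top u_i$ with $u_i$ a top left singular vector of $\rho_i(g_1)|_{W_i^{(1)}}$) give no usable lower bound on $\|P_{W_i}\hat\xi_i\|$ for any orbit element $W_i$, because $\|\rho_i(g_1)\|$ can vastly exceed $\|\rho_i(g_1)|_{W_i^{(1)}}\|$; so restricting $K$ to ``arising'' pairs neither gives a compact parameter set nor rescues the nonvanishing. Your closing sentence gestures at exploiting that each $W_i'$ is $\rho_i(G^\circ)$-irreducible plus $G$-irreducibility of $\rho_i$, but this only reproduces the per-factor statement already noted and does not close the multi-factor, multi-component problem.

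What the paper does, and what is genuinely missing from your plan, is a \emph{switching} step: Lemma~\ref{le:switch} produces a finite set $H_0\subset H$ which can map any element of $\mathcal W$ to any other, and the proof conjugates $g_1$ and $g_2$ by elements $h_1,h_2\in H_0$ so that \emph{all} the relevant subspaces coincide with the fixed tuple $(U_i)_i$. After this normalisation the nonvanishing question lives entirely on the single irreducible variety $G^\circ$, where the unique-component issue disappears, and Lemma~\ref{le:ell-i-minimal}'s minimality of $U_i$ under $\rho_i(G^\circ)$ yields nonemptiness of each Zariski-open set $\{g\in G^\circ : C_i(\rho_i(g)|_{U_i})D_i\neq 0\}$; their intersection is then nonempty because $G^\circ$ is an irreducible variety, and contains a point of $H$ by density. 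The compactness argument (Lemma~\ref{le:interp}) is then run over the unit spheres of the spaces of linear maps $U_i\to W_i'$ and $W_i\to U_i$, which is the correct compact parameter set. You could likely salvage your vector-level pairing inequality by inserting the same switching step before choosing $z_i$ and $\hat\xi_i$, so that $z_i\in U_i$ and $\hat\xi_i$ has a controlled component in $U_i$, but as written the proposal does not contain that step and the compactness argument it does contain is unsound.
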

\begin{proof}
We will show in a moment that $\mathcal{W}$ is finite. This property being assumed, let us first show that $\phi(g_1g_2)\leq \phi(g_1)\phi(g_2)$ for all $g_1,g_2 \in G$. Given $g_1$ and $g_2$, choose $(W_i)_{i=1}^k=(\rho_i(g_0)U_i)_{i=1}^k \in \mathcal{W}$ such that
\[\phi(g_1g_2)= \prod_{i=1}^k \left\|\rho_i(g_1g_2)|_{W_i}\right\|^{\beta_i},\]
then we have
\begin{align*} \phi(g_1g_2)&= \prod_{i=1}^k \|\rho_i(g_1g_2)|_{W_i}\|^{\beta_i}\\
& \leq  \left(\prod_{i=1}^k \left\|\rho_i(g_1)|_{\rho_i(g_2)W_i}\right\|^{\beta_i}\right)\left( \prod_{i=1}^k \left\|\rho_i(g_2)|_{W_i}\right\|^{\beta_i}\right)\leq \phi(g_1)\phi(g_2)\end{align*}
as required since $(\rho_i(g_2)W_i)_{i=1}^k =(\rho_i(g_2g_0)U_i)_{i=1}^k\in \mathcal{W}$ by definition. 

The remainder of the proof proceeds through a series of lemmas:
\begin{lemma}\label{le:loccon}
The function $g \mapsto (\rho_i(g)U_i)_{i=1}^k$ is constant on each irreducible component of $G$.
\end{lemma}
\begin{proof}
Let $G_j$ be an irreducible component of $G$ and suppose that $g_1,g_2 \in G_j$. The set $g_2^{-1}G_j$ is an irreducible component of $G$ (since $g \mapsto g_2^{-1}g$ is a Zariski homeomorphism of $G$) and contains the identity since $g_2 \in G_j$, so $g_2^{-1}G_j=G^\circ$ and hence in particular $g_2^{-1}g_1 \in G^\circ$. It follows from the definition of $(U_i)_{i=1}^k$ that $(\rho_i(g_2^{-1}g_1)U_i)_{i=1}^k = (U_i)_{i=1}^k$ and therefore $(\rho_i(g_2)U_i)_{i=1}^k = (\rho_i(g_1)U_i)_{i=1}^k$ as required.
\end{proof}
Let us now show that any element of $\mathcal{W}$ may be mapped onto any other by the action of some element of $H$: 
\begin{lemma}\label{le:switch}
The set $\mathcal{W}$ is finite, and there exists a finite set $H_0 \subseteq H$ such that given any $(W_i)_{i=1}^k,(W_i')_{i=1}^k \in \mathcal{W}$ we may find $h \in H_0$ such that $(W_i')_{i=1}^k =(\rho_i(h)W_i)_{i=1}^k$. 
\end{lemma}
\begin{proof}
Let $G_1,\ldots,G_\ell$ denote the irreducible components of $G$ and recall from \S\ref{ss:comp} that each is Zariski open as well as Zariski closed. By the previous lemma it follows that $\mathcal{W}$ has at most $\ell$ elements and in particular is finite. To prove the remainder of the lemma it suffices to show that for every $(W_i)_{i=1}^k,(W_i')_{i=1}^k \in \mathcal{W}$ the set
\[\mathcal{U} \coloneqq \left\{g \in G \colon (W_i')_{i=1}^k =(\rho_i(g)W_i)_{i=1}^k \right\}\]
contains an element of $H$. Since $H$ is Zariski-dense in $G$ it is sufficient to show that $\mathcal{U}$ contains a nonempty Zariski-open set. By the definition of $\mathcal{W}$ it follows that we may choose $g_2 \in G$ such that $(\rho_i(g_2)U_i)_{i=1}^k=(W_i)_{i=1}^k$. Now note that
\[\mathcal{V}\coloneqq \left\{g \in G \colon (W_i')_{i=1}^k =(\rho_i(g)U_i)_{i=1}^k \right\}\]
is nonempty by definition of $\mathcal{W}$, satisfies $\mathcal{V} g_2^{-1} \subseteq \mathcal{U}$  and by the previous lemma contains at least one irreducible component $G_j$ of $G$. We have $G_j g_2^{-1} \subseteq \mathcal{V} g_2^{-1} \subseteq \mathcal{U}$ and therefore $\mathcal{U}$ contains a nonempty Zariski-open set. In particular $\mathcal{U}$ contains an element of $H$ as required. 
\end{proof}

\begin{lemma} 
Let $(W_i)_{i=1}^k, (W_i')_{i=1}^k \in \mathcal{W}$ and for each $i=1,\ldots,k$ let $C_i \colon U_i \to W_i'$ and $D_i\colon W_i \to U_i$ be nonzero linear maps. Then there exists $h \in H$ such that $\rho_i(h)U_i=U_i$ and $C_i(\rho_i(h)|_{U_i})D_i \neq 0$ for every $i=1,\ldots,k$. \end{lemma}
\begin{proof}
We claim that for each $i=1,\ldots,k$ the set
\[\mathcal{U}_i\coloneqq \left\{g \in G^\circ \colon C_i(\rho_i(g)|_{U_i})D_i \neq 0 \right\}\]
is nonempty. If this is not the case for some $i$ then the vector space
\[\hat{U}_i\coloneqq \mathrm{span}\left(\bigcup_{g \in G^\circ} (\rho_i(g)|_{U_i})D_iW_i\right)\]
is a linear subspace of the kernel of $C_i$ and hence is a proper subspace of $U_i$ since $C_i$ is not the zero map. On the other hand it is also not the zero space since it contains $D_iW_i$ and $D_i$ is not the zero map. Lastly it is clear that $\rho_i(g)\hat{U}_i=\hat{U}_i$ for every $g \in G^\circ$. It follows that $\hat{U}_i$ is a nonzero proper vector subspace of $U_i$ which is invariant under $\rho_i(G^\circ)$. This contradicts the definition of $U_i$, and it follows that $\mathcal{U}_i$ is nonempty as claimed. Clearly each $\mathcal{U}_i$ is Zariski open, and $G^\circ$ is an irreducible variety. We recall from \S\ref{ss:varieties} that every nonempty Zariski-open subset of an irreducible variety is dense, so $\bigcap_{i=1}^k \mathcal{U}_i \neq \emptyset$ and since $H$ is Zariski dense in $G$ there exists $h \in H \cap \bigcap_{i=1}^k \mathcal{U}_i$ which proves the lemma.
\end{proof}
\begin{lemma}\label{le:interp}
There exist a finite set $H_1 \subset H$ and a real number $\delta_0>0$ with the following property: given any $(W_i)_{i=1}^k, (W_i')_{i=1}^k\in \mathcal{W}$,  if $g_1,g_2 \in G$ satisfy 
$\rho(g_1)U_i=W_i'$ and $\rho(g_2)W_i=U_i$ for all $i=1,\ldots,k$, then there exists $h \in H_1$ such that $\rho_i(h)U_i=U_i$ for all $i=1,\ldots,k$ and
\[\prod_{i=1}^k \left\|\rho_i(g_1hg_2)|_{W_i}\right\|^{\beta_i} \geq \delta_0\left(\prod_{i=1}^k \left\|\rho_i(g_1)|_{U_i}\right\|^{\beta_i}\right)\left(\prod_{i=1}^k \left\|\rho_i(g_2)|_{W_i}\right\|^{\beta_i}\right).\]
\end{lemma}
\begin{proof}
Since $\mathcal{W}$ is finite it is clearly sufficient to establish the existence of $H_1$ and $\delta_0$ for fixed $(W_i)_{i=1}^k, (W_i')_{i=1}^k\in \mathcal{W}$ and we therefore fix these objects throughout the proof. We claim that the following stronger property is satisfied: there exist a finite set $H_1 \subset H$ and a real number $\delta_0>0$ such that if for every $i=1,\ldots,k$ we are given arbitrary linear maps $C_i \colon U_i \to W_i'$ and $D_i \colon W_i \to U_i$, then there exists $h \in H_1$ such that $\rho_i(h)U_i=U_i$ for all $i=1,\ldots,k$ and
\[\prod_{i=1}^k \left\|C_i(\rho_i(h)|_{U_i})D_i\right\|^{\beta_i} \geq \delta_0\left(\prod_{i=1}^k \|C_i\|^{\beta_i}\right)\left(\prod_{i=1}^k \|D_i\|^{\beta_i}\right).\]
Applying this claim with $C_i=\rho_i(g_1)|_{U_i}$ and $D_i=\rho_i(g_2)|_{W_i}$ will then suffice to prove the lemma.

Let us prove this claim. By homogeneity we may assume $\|C_i\|=\|D_i\|=1$ for every $i=1,\ldots,k$. By the compactness of the unit spheres of the vector spaces of all linear maps $U_i \to W_i'$ and $W_i \to U_i$ the claim follows if we can establish the following result: given nonzero linear maps $C_i \colon U_i \to W_i'$ and $D_i \colon W_i \to U_i$ there exists $h \in H$ such that $\rho_i(h)U_i=U_i$ for all $i=1,\ldots,k$ and
\[\prod_{i=1}^k \left\|C_i(\rho_i(h)|_{U_i})D_i\right\|^{\beta_i} \neq 0.\]
But this is precisely the previous lemma.
\end{proof}
We may now complete the proof of Theorem~\ref{pr:quam}. Let $\delta_0>0$ and $H_1 \subset H$ be as in the previous lemma. Given $g_1, g_2 \in G$, choose $(W_i)_{i=1}^k, (W_i')_{i=1}^k, (W_i'')_{i=1}^k$ and $(W_i''')_{i=1}^k \in\mathcal{W}$ such that
\[\phi(g_1)=\prod_{i=1}^k \left\|\rho_i(g_1)|_{W_i''}\right\|^{\beta_i}, \qquad \phi(g_2)=\prod_{i=1}^k \left\|\rho_i(g_2)|_{W_i}\right\|^{\beta_i}\]
and $\rho_i(g_1)W_i''=W_i'''$ and $\rho_i(g_2)W_i=W_i'$ for each $i=1,\ldots,k$. Define
\[\kappa\coloneqq \min_{h \in H_0} \prod_{i=1}^k \left\|\rho_i(h)^{-1}\right\|^{-\beta_i}>0\]
where $H_0$ is as defined in Lemma~\ref{le:switch}. Using Lemma~\ref{le:switch} choose $h_1,h_2 \in H_0 $ such that $\rho_i(h_2)W_i'=U_i$ and $\rho_i(h_1)U_i=W_i''$ for all $i=1,\ldots,k$. Then
\[\prod_{i=1}^k \|\rho_i(g_1h_{1})|_{U_i}\|^{\beta_i} \geq \kappa\prod_{i=1}^k \|\rho_i(g_1)|_{W_i''}\|^{\beta_i}=\kappa \phi(g_1), \]
\[\prod_{i=1}^k \|\rho_i(h_{2}g_2)|_{W_i}\|^{\beta_i} \geq \kappa\prod_{i=1}^k \|\rho_i(g_2)|_{W_i}\|^{\beta_i}=\kappa\phi(g_2) \]
and $\rho(g_1h_{1})U_i=W_i'''$ and $\rho(h_{2}g_2)W_i=U_i$ for every $i=1,\ldots,k$. It follows from Lemma~\ref{le:interp} that we may find $h_0 \in H_1$ such that 
\begin{align*}
\phi(g_1h_1h_0h_2g_2)&\geq \prod_{i=1}^k \|\rho_i(g_1h_1h_0h_2g_2)|_{W_i}\|^{\beta_i}\\
&\geq \delta_0\left(\prod_{i=1}^k \|\rho_i(g_1h_{1})|_{U_i}\|^{\beta_i} \right)\left(\prod_{i=1}^k \|\rho_i(h_{2}g_2)|_{W_i}\|^{\beta_i}\right)\\
 &\geq \kappa^2\delta_0\phi(g_1)\phi(g_2),\end{align*}
and we have proved the theorem with $\delta\coloneqq \kappa^2\delta_0$ and $H'\coloneqq H_0H_1H_0$.\end{proof}


\section{The irreducible case: Proof of Theorem~\ref{th:irr}}\label{se:five}

Let $d\coloneqq \sum_{i=1}^k d_i$ and $V\coloneqq \bigoplus_{i=1}^k V_i$. Define $A_j\coloneqq \bigoplus_{i=1}^k A_j^{(i)} \in GL(V)$ for each $j=1,\ldots,N$, let $H\subset  GL(V)$ be the semigroup generated by $A_1,\ldots,A_N$ and let $G$ denote the Zariski closure of $H$ in $GL(V)$, which is obviously contained in the direct product of the $k$ groups $GL(V_i)$. For each $i$ let $\rho_i \colon G \to GL(V_i)$ be given by restriction to $V_i$ in the obvious fashion so that $\rho_i(A_\iii)=A_\iii^{(i)}$ for each $i=1,\ldots,k$ and $\iii \in \Sigma_N^*$. As was remarked in \S\ref{ss:alg-groups}, $G$ is necessarily a real algebraic group. By hypothesis the subsemigroup of $GL(V_i)$ generated by $A^{(i)}_1,\ldots,A^{(i)}_N$ does not preserve a proper nonzero subspace of $V_i$. Since this semigroup is contained in  $\rho_i(G)$ it follows that each $\rho_i$ is an irreducible representation of $G$. Clearly each $\rho_i$ is regular. 

We begin by showing that for each $i=1,\ldots,k$ the smallest possible dimension of a nonzero invariant subspace of the group $\rho_i(G^\circ)$ is precisely the number $\ell_i$ appearing in the statement of Theorem~\ref{th:irr}. The following argument recalls a classical observation of Gol'dsheid and Margulis \cite[Lemma 6.2]{GoMa89}:
\begin{lemma}\label{le:ell-i-minimal}
For each $i \in \{1,\ldots,k\}$ let $U_i\subseteq V_i$ be an $\ell_i$-dimensional subspace which has finite orbit under $A^{(i)}_1,\ldots,A^{(i)}_N$. Then $\rho_i(g)U_i=U_i$ for every $g \in G^\circ$ and no nonzero subspace of $V_i$ with dimension less than $\ell_i$ has this property. Furthermore the set $\{\rho_i(g)U_i\colon g \in G\}$ is finite for each $i$.
\end{lemma}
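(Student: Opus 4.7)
The plan is to exploit the Zariski density of the semigroup $H\coloneqq\{A_\iii \colon \iii\in\Sigma_N^*\}$ in $G$, converting the finiteness of the $H$-orbit of $U_i$ into the algebraic fact that $G^\circ$ stabilises $U_i$, and then invoking the component structure of $G$ described in \S\ref{ss:comp}. Fix $i\in\{1,\ldots,k\}$ and set $\mathcal{O}_i\coloneqq\{\rho_i(h)U_i \colon h\in H\}$, which is finite by hypothesis.

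The key technical step is to observe that for each fixed $U\in\mathrm{Gr}_{\ell_i}(V_i)$ the condition $\rho_i(g)U_i=U$ is Zariski-closed in $G$: choosing bases $(u_j)$ of $U_i$ and $(v_j)$ of $U$, this amounts to proportionality of $\rho_i(g)^{\wedge\ell_i}(u_1\wedge\cdots\wedge u_{\ell_i})$ and $v_1\wedge\cdots\wedge v_{\ell_i}$ in $\WEDGE^{\ell_i}V_i$, which is a system of polynomial equations in $g$ because $\rho_i$ is regular. Consequently $S\coloneqq\{g\in G \colon \rho_i(g)U_i\in\mathcal{O}_i\}$ is a finite union of Zariski-closed sets and hence Zariski-closed; as $S\supseteq H$ and $H$ is Zariski-dense in $G$, we have $S=G$, which proves that $\{\rho_i(g)U_i\colon g\in G\}\subseteq\mathcal{O}_i$ is finite (the third assertion). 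The same argument, with $\mathcal{O}_i$ replaced by $\{U_i\}$, shows that $\mathrm{Stab}_G(U_i)$ is Zariski-closed in $G$, and since the $G$-orbit of $U_i$ is finite this stabiliser has finite index. The intersection $\mathrm{Stab}_G(U_i)\cap G^\circ$ therefore has finite index in $G^\circ$, and its cosets would decompose $G^\circ$ into finitely many disjoint Zariski-closed subsets. Because $G^\circ$ is an irreducible variety only the trivial decomposition is possible, so $G^\circ\subseteq\mathrm{Stab}_G(U_i)$, giving the first assertion.

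For the minimality statement I would argue by contradiction. Suppose some nonzero subspace $U\subseteq V_i$ with $\dim U<\ell_i$ were invariant under $\rho_i(G^\circ)$. Then $\mathrm{Stab}_G(U)$ would contain $G^\circ$ and hence have finite index in $G$, so the $G$-orbit of $U$ in $\mathrm{Gr}_{\dim U}(V_i)$ would have cardinality at most $[G:G^\circ]<\infty$. Restricting to $H\subseteq G$, this would supply a nonzero subspace of dimension strictly less than $\ell_i$ with finite orbit under $A^{(i)}_1,\ldots,A^{(i)}_N$, contradicting the definition of $\ell_i$. The main subtlety is the Zariski-closedness of orbit-and-stabiliser conditions inside $G$; one sidesteps the projective nature of the Grassmannian by reducing the membership test to the polynomial proportionality condition in $\WEDGE^{\ell_i}V_i$, which suffices because only preimages of individual points (rather than the Grassmannian itself as a variety) are needed.
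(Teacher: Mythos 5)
Your proof is correct and follows essentially the same approach as the paper: express the orbit condition as a Zariski-closed subset of $G$, use Zariski density of the semigroup $H$ to transfer finiteness of the $H$-orbit to the $G$-orbit, invoke irreducibility of $G^\circ$ to conclude $G^\circ$ stabilises $U_i$, and argue minimality via finiteness of $G/G^\circ$. The only differences are cosmetic: you encode the stabiliser condition via proportionality in $\WEDGE^{\ell_i}V_i$ where the paper uses orthogonality against a fixed inner product, and you phrase the $G^\circ$ step in terms of cosets of the finite-index closed subgroup $\mathrm{Stab}_G(U_i)\cap G^\circ$ rather than the finite closed cover $G^\circ=\bigcup_j(G^\circ\cap\mathcal{V}_j)$ used in the paper.
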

\begin{proof}
Fix $i$ and $U_i$ throughout the proof and observe that $U_i$ has finite orbit under $\rho_i(H)$. We claim that  $\rho(g)U_i=U_i$ for every $g \in G^\circ$. Fix $i$ and let $U^1_i,\ldots,U^{T_i}_i$ be a complete list of the distinct images of $U_i$ under $\rho(H_i)$. We wish to show that this list also exhausts the possible images of $U_i$ under $\rho_i(G)$. If $\ell_i=d_i$ then this holds trivially, so let us assume $\ell_i<d_i$. Fix an inner product $\langle\cdot,\cdot\rangle$ on $V_i$, let $u_1,\ldots,u_{\ell_i}$ be a basis for $U_i$ and for each $j=1,\ldots,T_i$ let $v_{\ell_i+1}^j,\ldots,v_{d_i}^j$ be a basis for $(U_i^j)^\perp$. For each $j=1,\ldots,T_i$ define
\[\mathcal{V}_j\coloneqq \left\{g \in G \colon \rho_i(g)U_i=U^j_i\right\}.\]
We have
\[\mathcal{V}_j=\left\{g \in G \colon \langle \rho_i(g)u_n,v_m^j\rangle =0\text{ for all }1\leq n \leq \ell_i\text{ and }\ell_i+1\leq m \leq d_i\right\}\]
and this set is Zariski closed since $\rho_i$ is regular. We have $H=\bigcup_{j=1}^{T_i} (H \cap \mathcal{V}_j)$ and since $H$ is Zariski dense in $G$ we obtain $G = \overline{H}= \bigcup_{j=1}^{T_i} \overline{H \cap \mathcal{V}_j}\subseteq \bigcup_{j=1}^{T_i}\mathcal{V}_j\subseteq G$ where closures are taken in the Zariski topology. The resulting equation $G=\bigcup_{j=1}^{T_i} \mathcal{V}_j$ demonstrates that $\{\rho_i(g)U_i\colon g \in G\}$ is equal to the finite set $\{U_i^1,\ldots,U_i^{T_i}\}=\{\rho_i(h)U_i \colon h \in H\}$ as desired. We deduce the equation $G^\circ = \bigcup_{j=1}^{T_i} (G^\circ \cap \mathcal{V}_j)$ which expresses $G^\circ$ as a finite union of Zariski-closed subsets, and since $G^\circ$ is an irreducible variety it follows that $G^\circ= G^\circ \cap \mathcal{V}_j$ for some $j$. For this $j$ we have $\rho_i(g)U_i=U^j_i$ for all $g \in G^\circ$ and since $G^\circ$ contains the identity we conclude that $\rho_i(g)U_i=U_i$ for all $g \in G^\circ$ as required.

It remains to show that there is no nonzero subspace $\hat{U}_i\subseteq V_i$ which is fixed by $\rho_i(G^\circ)$ and has dimension strictly smaller than $\ell_i$. Suppose for a contradiction that such a space exists. If $g_1,g_2 \in G_j$ belong to the same irreducible component of $G$ then by identical reasoning to the proof of Lemma~\ref{le:loccon} we have $g_2^{-1}g_1 \in G^\circ$ and therefore $\rho_i(g_2^{-1}g_1)\hat{U}_i=\hat{U}_i$ so that $\rho_i(g_1)\hat{U}_i=\rho_i(g_2)\hat{U}_i$, and it follows that the map $g \mapsto \rho_i(g)\hat{U}_i$ is constant on each irreducible component of $G$. Since $G$ has finitely many irreducible components it follows that the orbit of $\hat{U}_i$ under $\rho_i(G)$, and hence under $\rho_i(H)$, is finite; but then the orbit of $\hat{U}_i$ under $(A_1^{(i)},\ldots,A_N^{(i)})$ is finite, and this contradicts the minimality of $\ell_i$. \end{proof}

For the remainder of the proof let us fix $U_1,\ldots,U_k$ such that $\rho_i(G^\circ)U_i=U_i$ and $\dim U_i=\ell_i$. We make the following observation:
\begin{lemma}\label{le:fix}
For every $i=1,\ldots,k$ and $g_0 \in G$ the subspace $\rho_i(g_0)U_i$ is fixed by every element of $\rho_i(G^\circ)$. 
\end{lemma}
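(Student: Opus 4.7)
The plan is to exploit normality of $G^\circ$ in $G$, which was established in Subsection~\ref{ss:comp} via the observation that for every $g_0 \in G$ the conjugate $g_0 G^\circ g_0^{-1}$ is an irreducible component of $G$ containing the identity and hence equals $G^\circ$. With this in hand the statement becomes a one-line algebraic manipulation: given $g \in G^\circ$ and $g_0 \in G$, the element $g_0^{-1} g g_0$ lies in $G^\circ$, and by the defining property of $U_i$ we have $\rho_i(g_0^{-1} g g_0) U_i = U_i$. Applying $\rho_i(g_0)$ to both sides and using that $\rho_i$ is a group homomorphism yields $\rho_i(g)\bigl(\rho_i(g_0) U_i\bigr) = \rho_i(g_0) U_i$, which is exactly the claim.

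So the only things I need to cite are (i) normality of $G^\circ$ in $G$, recorded in Subsection~\ref{ss:comp}, and (ii) the fact that $\rho_i(G^\circ) U_i = U_i$, which is part of the standing hypothesis fixed just before the statement of the lemma (and which itself was established in Lemma~\ref{le:ell-i-minimal}). No obstacle is anticipated: the result is a direct formal consequence of normality. The proof will essentially read: "Fix $g \in G^\circ$ and $g_0 \in G$. Since $G^\circ$ is normal in $G$, the element $g_0^{-1} g g_0$ belongs to $G^\circ$, and so $\rho_i(g_0^{-1} g g_0) U_i = U_i$. Multiplying by $\rho_i(g_0)$ on the left gives $\rho_i(g)\rho_i(g_0) U_i = \rho_i(g_0) U_i$, which is the desired conclusion."
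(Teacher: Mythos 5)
Your proof is correct and takes essentially the same route as the paper: the paper phrases the key step as the identity $g_0 G^\circ = G^\circ g_0$ of left and right cosets (noted at the end of \S\ref{ss:comp}), writing $gg_0 = g_0 g'$ with $g' \in G^\circ$, which is just normality expressed in slightly different words before the same one-line computation.
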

\begin{proof}
The sets $g_0G^\circ$ and $G^\circ g_0$ are both irreducible components of $G$ which contain $g_0$, therefore they are identical as noted at the end of \S\ref{ss:comp}. If $g \in G^\circ$ then we have $gg_0=g_0g'$ for some $g' \in G^\circ$, so $\rho_i(g)\rho_i(g_0)U_i=\rho_i(g_0)\rho_i(g')U_i=\rho_i(g_0)U_i$ as required.
\end{proof}
We wish next to show that for each $i$ the vector space $V_i$ may be written as a direct sum of images of $U_i$:
\begin{lemma}\label{le:decomp}
For each $i=1,\ldots,k$ there exist $t_i\in\mathbb{N}$ and subspaces $U_i^1,\ldots,U_i^{t_i} \in \{\rho_i(g)U_i\colon g \in G\}$ such that $U_i^1=U_i$ and $V_i=\bigoplus_{j=1}^{t_i}U_i^j$. In particular $t_i\ell_i=d_i$ for every $i=1,\ldots,k$.
\end{lemma}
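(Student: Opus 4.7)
The plan is to exhibit $V_i$ as a direct sum of translates of $U_i$ under $\rho_i(G)$ by a greedy construction, with Lemma~\ref{le:ell-i-minimal} and Lemma~\ref{le:fix} doing the heavy lifting.

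First I would consider the subspace
\[W_i \coloneqq \sum_{g \in G} \rho_i(g) U_i \subseteq V_i.\]
For any $g_0 \in G$ we have $\rho_i(g_0) W_i = \sum_{g} \rho_i(g_0 g) U_i \subseteq W_i$, so $W_i$ is $\rho_i(G)$-invariant. Since $W_i \supseteq U_i \neq 0$ and $\rho_i$ is irreducible, we conclude $W_i = V_i$. By Lemma~\ref{le:ell-i-minimal} the orbit $\{\rho_i(g) U_i : g \in G\}$ is actually finite, so $V_i$ is already a finite sum of $\ell_i$-dimensional translates of $U_i$.

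The key observation is a rigidity statement: if $Y \subseteq V_i$ is any $\rho_i(G^\circ)$-invariant subspace and $g \in G$, then either $\rho_i(g) U_i \subseteq Y$ or $\rho_i(g) U_i \cap Y = 0$. Indeed, by Lemma~\ref{le:fix} the space $\rho_i(g) U_i$ is $\rho_i(G^\circ)$-invariant, hence so is the intersection $\rho_i(g) U_i \cap Y$. This intersection is a $\rho_i(G^\circ)$-invariant subspace of dimension at most $\ell_i$; by the minimality clause of Lemma~\ref{le:ell-i-minimal} it is either zero or of dimension exactly $\ell_i$, and in the latter case it must equal $\rho_i(g) U_i$ itself.

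Now I would build the decomposition greedily. Set $U_i^1 \coloneqq U_i$. Having chosen subspaces $U_i^1, \ldots, U_i^r$ in the orbit of $U_i$ whose sum $Y_r \coloneqq U_i^1 + \cdots + U_i^r$ is direct, either $Y_r = V_i$ and we stop, or else, since $V_i$ is spanned by the finitely many translates $\rho_i(g) U_i$, at least one such translate is not contained in $Y_r$. Because $Y_r$ is a sum of $\rho_i(G^\circ)$-invariant subspaces it is itself $\rho_i(G^\circ)$-invariant, so the rigidity observation forces that translate to meet $Y_r$ trivially; take it as $U_i^{r+1}$, enlarging the direct sum. The process terminates by dimension, and at termination we have $V_i = \bigoplus_{j=1}^{t_i} U_i^j$ with each summand of dimension $\ell_i$, so $d_i = t_i \ell_i$ and $t_i = d_i/\ell_i \in \mathbb{N}$, as claimed. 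No step looks like a serious obstacle; the only subtlety is the rigidity claim, which rests entirely on the minimality of $\ell_i$ provided by Lemma~\ref{le:ell-i-minimal}.
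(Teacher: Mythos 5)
Your proof is correct and follows essentially the same route as the paper: both hinge on the observation (from Lemma~\ref{le:fix} and the minimality clause of Lemma~\ref{le:ell-i-minimal}) that a translate $\rho_i(g)U_i$ either lies inside or meets trivially any $\rho_i(G^\circ)$-invariant subspace, and both use irreducibility to force the translates to span $V_i$. The paper phrases this as a maximality/contradiction argument where you phrase it as a greedy construction, but the mathematical content is identical.
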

\begin{proof}
Fix $i$ and let $d_i'\leq d_i$ be the largest integer such that we may form a direct sum $V_i'= U_i^1 \oplus \cdots \oplus U_i^{t_i'}\subseteq V_i$ with dimension $d_i'$ where $U_i^1=U_i$ and $U_i^j \in \{\rho_i(g)U_i\colon g \in G\}$ for each $j$. Note that by the previous lemma any such sum is a $\rho_i(G^\circ)$-invariant subspace of $V_i$. Clearly $d_i' \geq \ell_i>0$ since $U_i$ itself is such a direct sum. Suppose for a contradiction that $d_i' < d_i$, then for every $g \in G$ we have $\rho_i(g)U_i \cap V_i' \neq \{0\}$ since otherwise $d_i'$ would not be maximal. If there exists $g \in G$ such that $0<\dim (\rho_i(g)U_i \cap V_i')<\dim U_i$ then by the previous lemma $\rho_i(g)U_i \cap V_i'$ is a subspace of $V_i$ which is fixed by every element of $\rho_i(G^\circ)$ but has dimension strictly smaller than $\ell_i$, contradicting Lemma~\ref{le:ell-i-minimal}. Otherwise $\dim (\rho_i(g)U_i \cap V_i')=\dim U_i$ for every $g \in G$ which implies that $\rho_i(g)U_i \subseteq V_i'$ for every $g \in G$. It follows then that the vector space $\mathrm{span}\bigcup_{g \in G}\rho_i(g)U_i\subseteq V_i'$ is fixed by $\rho_i(g)$ for every $g \in G$ but has dimension not greater than $d_i'<d_i$, contradicting the irreducibility of the representation $\rho_i$. We conclude that the inequality $d_i'<d_i$ is impossible and therefore the integer $t_i$ and subspaces $U_i^j$ such that $V_i=\bigoplus_{j=1}^{t_i}U_i^j$ must exist. The equation $d_i=t_i\ell_i$ follows trivially.
\end{proof}
For the remainder of the proof we fix for each $i=1,\ldots,k$ a decomposition $V_i=\bigoplus_{j=1}^{t_i}U_i^j$ with the properties described in Lemma~\ref{le:decomp}.

We may now define the sets $\mathcal{W}_j \subset \prod_{i=1}^k \mathrm{Gr}_{\ell_i}(V_i)$ mentioned in the statement of the theorem. By relabelling the indices $i$ if necessary we assume without loss of generality that $\max_i t_i = t_1$. Define $p\coloneqq \prod_{i=2}^k t_i = (\max_i t_i)^{-1} \prod_{i=1}^k t_i$. Define
\[\mathfrak{J}\coloneqq \{\mathfrak{j}=(j_1,\ldots,j_k)\in \mathbb{N}^k \colon j_1=1\text{ and }1 \leq j_i \leq t_i\text{ for all }i=2,\ldots,k\}\]
and for each $\mathfrak{j}=(j_i)_{i=1}^k \in \mathfrak{J}$ define
\[\mathcal{W}_{\mathfrak{j}}\coloneqq \left\{(\rho_i(g)U_i^{j_i})_{i=1}^k \colon g \in G\right\}\subset \prod_{i=1}^k \mathrm{Gr}_{\ell_i}(V_i)\]
and
\[\phi_{\mathfrak{j}}(g)\coloneqq \max_{(W_i)_{i=1}^k\in \mathcal{W}_{\mathfrak{j}}} \prod_{i=1}^k \|\rho_i(g)|_{W_i}\|^{\beta_i}\]
for every $g \in G$.
Note that $\#\mathfrak{J}=p$ and therefore there are exactly $p$ sets $\mathcal{W}_{\mathfrak{j}}$ and functions $\phi_{\mathfrak{j}}$ as required by the statement of the theorem.
We observe that if $(W_i)_{i=1}^k \in \mathcal{W}_{\mathfrak{j}}$ then $(\rho_i(g)W_i)_{i=1}^k \in \mathcal{W}_{\mathfrak{j}}$ for every $g \in G$ and hence in particular $(A^{(i)}_jW_i)_{i=1}^k \in \mathcal{W}_{\mathfrak{j}}$ for each $j=1,\ldots,N$. It follows that each $\mathcal{W}_{\mathfrak{j}}$ has the invariance property claimed in the statement of Theorem~\ref{th:irr}.
We observe that for each $\mathfrak{j}\in \mathfrak{J}$, $\phi_{\mathfrak{j}}$ and $\mathcal{W}_{\mathfrak{j}}$ meet the hypotheses of Theorem~\ref{pr:quam}, since it follows from Lemma~\ref{le:fix} that $(U_i^{j_i})_{i=1}^k$ is a $k$-tuple of $\ell_i$-dimensional subspaces which are fixed by $\rho_i(G^\circ)$, and since by Lemma \ref{le:ell-i-minimal} there can for each $i$ be no nonzero subspace of $V_i$ which is fixed by $\rho_i(G^\circ)$ but has dimension strictly less than $\ell_i$. Hence each $\mathcal{W}_{\mathfrak{j}}$ is a finite set and there exist a finite set $H' \subset H$ and a constant $\delta>0$ such that for each $\mathfrak{j} \in \mathfrak{J}$ we have
\begin{equation}\label{eq:del}\delta\phi_{\mathfrak{j}}(g_1)\phi_{\mathfrak{j}}(g_2) \leq \max_{h \in H'} \phi_{\mathfrak{j}}(g_1hg_2), \qquad  \phi_{\mathfrak{j}}(g_1g_2) \leq \phi_{\mathfrak{j}}(g_1)\phi_{\mathfrak{j}}(g_2) \end{equation}
for every $g_1,g_2 \in G$.

Define $\phi(g):=\prod_{i=1}^k \|\rho_i(g)\|^{\beta_i}$ for every $g \in G$. We claim that there is a constant $\tau>0$ such that 
\[\tau \phi(g) \leq \max_{\mathfrak{j} \in \mathfrak{J}} \phi_{\mathfrak{j}} (g) \leq \phi(g)\]
for every $g \in G$. The latter of the two inequalities is trivial, so we consider the former.

We first note that there is a constant $\kappa_1>0$ such that for any nonzero linear map $B_1 \colon V_1 \to V_1$ one has
\[\kappa_1 \|B_1\|^{\beta_1} \leq \max_{g \in G}\left\|B_1|_{\rho_1(g)U_1}\right\|^{\beta_1}.\]
(This maximum is well-defined since the set $\{\rho_1(g)U_1 \colon g \in G\}$ is finite by Lemma~\ref{le:ell-i-minimal}.)  To see this it is sufficient to consider the case $\|B_1\|=1$. If the result is false then by compactness there exists $B_1$ such that $\|B_1\|=1$ and $B_1|_{\rho_1(g)U_1}=0$ for all $g \in G$, and in particular $B_1|_{U_1^j}=0$ for all $j=1,\ldots,t_1$; but since $V_1=\bigoplus_{j=1}^{t_1}U_1^j$ we would have $B_1=0$ by linearity, a contradiction. We deduce the existence of the constant $\kappa_1>0$. In a closely-related fashion we assert that for each $i=2,\ldots,k$ there exists $\kappa_i>0$ such that for any nonzero linear map $B_i \colon V_i \to V_i$ and any $g \in G$ one has
\[\kappa_i \left\|B_i\right\|^{\beta_i} \leq \max_{1 \leq j \leq t_i} \left\|B_i|_{\rho_i(g)U_i^j}\right\|^{\beta_i}.\]
Clearly it suffices to prove this assertion individually for each $i$. By Lemma~\ref{le:ell-i-minimal} the vector space $\rho_i(g)U_i^j$ takes only finitely many values as $g$ varies over $G$, so it is also sufficient to prove the assertion for fixed $g \in G$. To prove the assertion we fix $i$ and $g$, reduce once more to the case $\|B_i\|=1$ and, applying compactness, note that if the result is false then we can find $B_i \colon V_i \to V_i$ with norm $1$ which is zero on $\bigoplus_{j=1}^{t_i} \rho_i(g)U_i^j = \rho_i(g) \left(\bigoplus_{j=1}^{t_i}U_i^j \right)= \rho_i(g)V_i=V_i$. This is clearly impossible and the existence of $\kappa_2,\ldots,\kappa_k>0$ follows.

Now fix $g \in G$ and observe that we may choose $g_0\in G$ such that 
\[\kappa_1\|\rho_1(g)\|^{\beta_1}\leq  \|\rho_1(g)|_{\rho_1(g_0)U_1}\|^{\beta_1}.\]
We then have 
\[\kappa_i \left\|\rho_i(g)\right\|^{\beta_i} \leq \max_{1 \leq j_i \leq t_i} \left\|\rho_i(g)|_{\rho_i(g_0)U_i^{j_i}}\right\|^{\beta_i}\]
for each $i=2,\ldots,k$ and therefore there exists $\mathfrak{j}=(1,j_2,\ldots,j_k)\in \mathfrak{J}$ such that
\[\left(\prod_{i=1}^k \kappa_i\right)\phi(g)=\prod_{i=1}^k \kappa_i\left\|\rho_i(g)\right\|^{\beta_i }\leq \prod_{i=1}^k \left\|\rho_i(g)|_{\rho_i(g_0)U_i^{j_i}}\right\|^{\beta_i} \leq\phi_{\mathfrak{j}}(g) \]
by the definition of $\phi_{\mathfrak{j}}(g)$. Since $g \in G$ was arbitrary we conclude that 
\[\tau \phi(g) \leq \max_{\mathfrak{j} \in \mathfrak{J}} \phi_{\mathfrak{j}} (g) \leq \phi(g)\]
for all $g \in G$ as required, where $\tau\coloneqq \prod_{i=1}^k \kappa_i>0$. 

We are finally ready to investigate the equilibrium states of the potential
\[\Phi(\iii)\coloneqq \phi(A_{\iii})=\prod_{i=1}^k \left\|A_{\iii}^{(i)}\right\|^{\beta_i}.\]
For each $\mathfrak{j}\in \mathfrak{J}$ we define a potential $\Phi_{\mathfrak{j}}\colon \Sigma_N^* \to (0,+\infty)$ by
\[\Phi_{\mathfrak{j}}(\iii)\coloneqq \phi_{\mathfrak{j}}(A_{\iii})=\max_{(W_i)_{i=1}^k\in\mathcal{W}_{\mathfrak{j}}}\prod_{i=1}^k \left\|A_{\iii}^{(i)}|_{W_i}\right\|^{\beta_i}\]
and note that 
\begin{equation}\label{eq:max}\tau \Phi(\iii) \leq\max_{\mathfrak{j}\in \mathfrak{J}}\Phi_{\mathfrak{j}}(\iii) \leq \Phi(\iii)\end{equation}
for every $\iii \in \Sigma_N^*$ as required in the statement of Theorem~\ref{th:irr}. It follows directly that $P(\Phi)\geq P(\Phi_{\mathfrak{j}})$ for every $\mathfrak{j}\in \mathfrak{J}$. Since $H'$ is finite we may choose a finite set of words $F$ such that  $H'=\{A_{\iii} \colon \iii \in F\}$. For fixed $\mathfrak{j}\in \mathfrak{J}$ we may for every $\iii,\jjj$ find a corresponding word $\kkk \in F$ such that
\[\Phi_{\mathfrak{j}}(\iii\kkk\jjj) \geq \delta \Phi_{\mathfrak{j}}(\iii)\Phi_{\mathfrak{j}}(\jjj)\] 
by virtue of \eqref{eq:del}, so each $\Phi_{\mathfrak{j}}$ is quasimultiplicative. Clearly each $\Phi_{\mathfrak{j}}$ is also submultiplicative. It follows by Proposition~\ref{pr:kare} that for each $\mathfrak{j}\in \mathfrak{J}$ the potential $\Phi_{\mathfrak{j}}$ has a unique equilibrium state $\mu_{\mathfrak{j}}$ and this measure satisfies a Gibbs inequality with respect to $\Phi_{\mathfrak{j}}$.

Suppose now that $\mu$ is an ergodic equilibrium state of $\Phi$. By the subadditive ergodic theorem we have for $\mu$-a.e. $x \in \Sigma_N$
\[\Lambda(\Phi,\mu)=\lim_{n\to\infty} \frac{1}{n}\log \Phi(x|_n)\]
and
\[\Lambda(\Phi_{\mathfrak{j}},\mu)=\lim_{n\to\infty} \frac{1}{n}\log \Phi_{\mathfrak{j}} (x|_n)\]
for every $\mathfrak{j}\in \mathfrak{J}$. Using \eqref{eq:max} it follows that for $\mu$-a.e. $x\in \Sigma_N$ we have
\begin{align*}\Lambda(\Phi,\mu)&=\lim_{n\to\infty} \frac{1}{n}\log \Phi(x|_n)\\
&=\lim_{n\to\infty} \frac{1}{n}\log \max_{\mathfrak{j}\in \mathfrak{J}}\Phi_{\mathfrak{j}}(x|_n)\\
&= \max_{\mathfrak{j}\in \mathfrak{J}}\lim_{n\to\infty}\frac{1}{n}\log \Phi_{\mathfrak{j}}(x|_n)=\max_{\mathfrak{j}\in \mathfrak{J}}\Lambda(\Phi_{\mathfrak{j}},\mu).\end{align*}
Hence there exists $\mathfrak{j}\in \mathfrak{J}$ such that $\Lambda(\Phi,\mu) = \Lambda(\Phi_{\mathfrak{j}},\mu)$ and therefore
\[P(\Phi)=h(\mu)+\Lambda(\Phi,\mu)=h(\mu)+\Lambda(\Phi_{\mathfrak{j}},\mu) \leq P(\Phi_{\mathfrak{j}}) \leq P(\Phi)\]
so that $h(\mu)+\Lambda(\Phi_{\mathfrak{j}},\mu) = P(\Phi_{\mathfrak{j}})$. We conclude that $\mu$ is an equilibrium state of $\Phi_{\mathfrak{j}}$ and is therefore equal to $\mu_{\mathfrak{j}}$. The proof of the theorem is complete.

\section{The reducible case: proof of Theorem~\ref{th:force-diagonal}}\label{se:six}

Let us first prove the existence of representations of the linear maps $A_j^{(i)}$ as matrices of the form \eqref{eq:mat}. Clearly we may prove this statement separately for each individual $i$. Fix $i$ and let $n_i$ be the maximum integer such that there exists a basis for $V$ in which we may write
\begin{equation}\label{eq:tri2}A^{(i)}_j =
    \begin{pmatrix}
      A_j^{(i,1)} & * & \cdots & * \\
      0           & A_j^{(i,2)} & \cdots & * \\
      \vdots      & \vdots      & \ddots & \vdots \\
      0           & 0           & \cdots & A_j^{(i,n_i)}
    \end{pmatrix}
\end{equation}
for every $j=1,\ldots,N$ where each $A_j^{(i,r)}$ is an invertible square matrix with dimension $d_{i,r}$. Obviously a largest such integer exists since $1$ is such an integer and since no such integer can be greater than $d_i$. We observe that for each $r$ the tuple $(A_1^{(i,r)},\ldots,A_N^{(i,r)})$ must be irreducible: if it admits an invariant subspace $W$ with $0<\dim W <d_{i,r}$ then up to a suitable simultaneous change of basis we have
\[A_j^{(i,r)} = \begin{pmatrix}A_j^{(i,r)}|_W & * \\ 0 & *\end{pmatrix}\]
which implies that the block upper triangularisation \eqref{eq:tri2} may be refined so as to have $n_i+1$ diagonal blocks instead of $n_i$, contradicting the maximality of $n_i$. This completes the proof of the existence of a triangularisation \eqref{eq:mat} with irreducible diagonal blocks as claimed in the statement of Theorem~\ref{th:force-diagonal}. By relabelling the indices $i=1,\ldots,k$ if necessary, for the remainder of the proof we shall assume that $\min_{1 \leq i \leq k} n_i/d_i=n_1/d_1$. Our desired bound for the number of ergodic equilibrium states of $\Phi$ is therefore $n_1 \prod_{i=2}^k d_i$.

Let $\mathfrak{R}: =\{\mathfrak{r} =(r_1,\ldots,r_k)\colon 1 \leq r_i \leq n_i\text{ for all }i=1,\ldots,k\}$ and let us define potentials $\Phi, \Phi_{\mathfrak{r}}\colon \Sigma_N^* \to (0,+\infty)$ by 
\[\Phi(\iii)=\prod_{i=1}^k \left\|A_\iii^{(i)}\right\|^{\beta_i},\qquad \Phi_{\mathfrak{r}}(\iii)=\prod_{i=1}^k \left\|A_\iii^{(i,r_i)}\right\|^{\beta_i}\]
where $\mathfrak{r} \in \mathfrak{R}$. For each $\mu \in \mathcal{M}_\sigma$, $\mathfrak{r}\in\mathfrak{R}$  and $i \in \{1,\ldots,k\}$ let us define
\[\Lambda\left(A^{(i)},\mu\right) \coloneqq \lim_{n \to \infty} \frac{1}{n}\sum_{|\iii|=n}^N \mu([\iii])\log \left\|A_\iii^{(i)}\right\|\]
and
\[\Lambda\left(A^{(i,r_i)},\mu\right) \coloneqq \lim_{n \to \infty} \frac{1}{n}\sum_{|\iii|=n}^N \mu([\iii])\log \left\|A_\iii^{(i,r_i)}\right\|\]
and note that
\[\Lambda\left(\Phi,\mu\right)=\sum_{i=1}^k \beta_i \Lambda\left(A^{(i)},\mu\right)\]
by direct calculation. If $\mu$ is ergodic then by standard arguments (see e.g.\ \cite[p.~129--130]{LArnold}) the equation \eqref{eq:tri2} implies
\[\Lambda\left(A^{(i)},\mu\right)=\max_{1 \leq r_i \leq n_i} \Lambda\left(A^{(i,r_i)},\mu\right).\]
It follows that if $\mu$ is ergodic then 
\begin{align*}\Lambda(\Phi,\mu)&=\sum_{i=1}^N \beta_i \max_{1 \leq r_i \leq n_i}\Lambda\left(A^{(i,r_i)},\mu\right)\\&=\max_{\mathfrak{r} \in \mathfrak{R}} \sum_{i=1}^N \beta_i\Lambda\left(A^{(i,r_i)},\mu\right)=\max_{\mathfrak{r}\in\mathfrak{R}} \Lambda\left(\Phi_{\mathfrak{r}},\mu\right).\end{align*}
Hence by the subadditive variational principle \eqref{eq:svp} we have $P(\Phi)=\max_{\mathfrak{r} \in \mathfrak{R}}P(\Phi_{\mathfrak{r}})$ and if $\mu$ is an ergodic equilibrium state of $\Phi$ then it is necessarily an equilibrium state of $\Phi_{\mathfrak{r}}$ for some $\mathfrak{r}\in \mathfrak{R}$ as claimed in the statement of Theorem~\ref{th:force-diagonal}. By Theorem~\ref{th:irr}, for each $\mathfrak{r}=(r_1,\ldots,r_k) \in \mathfrak{R}$ there exist integers $t_{1,r_1},\ldots,t_{k,r_k}$ with $t_{i,r_i} \divides d_{i,r_i}$ for each $i$ such that the number of ergodic equilibrium states of the potential $\Phi_{\mathfrak{r}}$ is bounded above by
\[\left(\max_{1 \leq i \leq k} t_{i,r_i}\right)^{-1}\left(\prod_{i=1}^k t_{i,r_i} \right) \leq \prod_{i=2}^k t_{i,r_i} \leq \prod_{i=2}^k d_{i,r_i} \]
and all of the equilibrium states of $\Phi_{\mathfrak{r}}$  are fully supported. It follows that every ergodic equilibrium state of $\Phi$ is fully supported and the number of ergodic equilibrium states of $\Phi$ is bounded above by $\sum_{\mathfrak{r}\in \mathfrak{R}} \prod_{i=2}^k d_{i,r_i}$. Let us write $\hat{d}_{i,r_i}=1$ when $i=1$ and $\hat{d}_{i,r_i}=d_{i,r_i}$ otherwise; then the number of ergodic equilibrium states of $\Phi$ is bounded by 
\[\sum_{\mathfrak{r}\in \mathfrak{R}} \prod_{i=2}^k d_{i,r_i}= \sum_{\mathfrak{r}\in \mathfrak{R}} \prod_{i=1}^k \hat{d}_{i,r_i} =\prod_{i=1}^k \sum_{r_i=1}^{n_i} \hat{d}_{i,r_i} = n_1\prod_{i=2}^k d_i= \left(\min_{1 \leq i \leq k} \frac{n_i}{d_i}\right)\prod_{i=1}^k d_i\]
as required. Since every equilibrium state of $\Phi$ is a linear combination of ergodic equilibrium states, every equilibrium state of $\Phi$ is fully supported. Clearly $n_i \leq d_i$ for every $i$ by the definition of $n_i$, so $n_i/d_i \leq 1$ for every $i$. It follows that if the number of ergodic equilibrium states is exactly $\prod_{i=1}^k d_i$ then necessarily $n_i/d_i=1$ for every $i$, which is to say $n_i\equiv d_i$ and thus for every $i=1,\ldots,k$ the tuple $(A_1^{(i)},\ldots,A_N^{(i)})$ is simultaneously triangularisable by the definition of $n_i$. The proof of the theorem is complete.

\section{Sharp bounds for the number of ergodic equilibrium states}\label{se:seven}

It is not clear to what extent the upper bound $p\coloneqq (\max_i t_i)^{-1}\prod_{i=1}^k t_i$ for the number of ergodic equilibrium states in Theorem~\ref{th:irr} is optimal. Let us briefly sketch an example which shows that $2^{k-1}$ ergodic equilibrium states can exist in the case where $t_i \equiv 2$. For simplicity only the case $k=2$ will be examined in detail since for larger $k$ the notation quickly becomes cumbersome. We will find it convenient to index our matrices $A_j^{(i)}$ starting from $j=0$ instead of from $j=1$ while otherwise retaining our previous notation for $\Sigma_N$, $\Sigma_N^*$ and so forth. For related reasons we let $e_0,e_1$ denote the standard basis for $\mathbb{R}^2$. Define 
\[A_0^{(1)}=\begin{pmatrix}
  0&2\\1&0
      \end{pmatrix},\quad
      A_1^{(1)}=\begin{pmatrix}
  0&1\\2&0
      \end{pmatrix},\quad
      A_2^{(1)}=\begin{pmatrix}
  0&2\\1&0
      \end{pmatrix},\quad
      A_3^{(1)}=\begin{pmatrix}
  0&1\\2&0
      \end{pmatrix},
      \]
\[A_0^{(2)}=\begin{pmatrix}
  0&2\\1&0
      \end{pmatrix},\quad
      A_1^{(2)}=\begin{pmatrix}
  0&2\\1&0
      \end{pmatrix},\quad
      A_2^{(2)}=\begin{pmatrix}
  0&1\\2&0
      \end{pmatrix},\quad
      A_3^{(2)}=\begin{pmatrix}
  0&1\\2&0
      \end{pmatrix}
      \]
and define a potential $\Phi \colon \Sigma_4^* \to (0,+\infty)$ by
\[\Phi(\iii)=\prod_{i=1}^2 \left\|A_\iii^{(i)}\right\| = \left\|A_\iii^{(1)} \otimes A_\iii^{(2)}\right\|\]
where $\otimes$ denotes the tensor product. We note that the hypotheses of Theorem~\ref{th:irr} are satisfied. Let $B_\iii\coloneqq A_\iii^{(1)}\otimes A_\iii^{(2)}$ for each $\iii \in \Sigma_4^*$. In the basis $e_0 \otimes e_0$, $e_1 \otimes e_1$, $e_0 \otimes e_1$, $e_1 \otimes e_0$ for $\mathbb{R}^2 \otimes \mathbb{R}^2$ we have
\[B_0=\begin{pmatrix}
0&4&0&0\\
1&0&0&0\\
0&0&0&2\\
0&0&2&0
\end{pmatrix},\qquad
B_1=\begin{pmatrix}
0&2&0&0\\
2&0&0&0\\
0&0&0&1\\
0&0&4&0
\end{pmatrix},
\]
\[B_2=\begin{pmatrix}
0&2&0&0\\
2&0&0&0\\
0&0&0&4\\
0&0&1&0
\end{pmatrix},\qquad
B_3=\begin{pmatrix}
0&1&0&0\\
4&0&0&0\\
0&0&0&2\\
0&0&2&0
\end{pmatrix}.
\]
It follows that if we define
\[C_0=\begin{pmatrix}
  0&4\\1&0
      \end{pmatrix},\quad
      C_1=\begin{pmatrix}
  0&2\\2&0
      \end{pmatrix},\quad
      C_2=\begin{pmatrix}
  0&2\\2&0
      \end{pmatrix},\quad
      C_3=\begin{pmatrix}
  0&1\\4&0
      \end{pmatrix},
      \]
      \[D_0=\begin{pmatrix}
  0&2\\2&0
      \end{pmatrix},\quad
      D_1=\begin{pmatrix}
  0&1\\4&0
      \end{pmatrix},\quad
      D_2=\begin{pmatrix}
  0&4\\1&0
      \end{pmatrix},\quad
      D_3=\begin{pmatrix}
  0&2\\2&0
      \end{pmatrix}
      \]
and
\[\Phi_C(\iii)=\|C_\iii\|,\qquad \Phi_D(\iii)=\|D_\iii\|\]
for every $\iii \in \Sigma_4$ then $\Phi =\max\{\Phi_C,\Phi_D\}$. Clearly $P(\Phi_C)=P(\Phi_D)$ and it follows that $P(\Phi)=P(\Phi_C)=P(\Phi_D)$. We may easily deduce that an ergodic measure $\mu$ on $\Sigma_4$ is an equilibrium state of $\Phi$ if and only if it is an equilibrium state of either $\Phi_C$ or $\Phi_D$. Clearly $(C_0,\ldots,C_3)$ and $(D_0,\ldots,D_3)$ are both irreducible, so by Corollary~\ref{co:ral} there exist measures $\mu_C$, $\mu_D$ on $\Sigma_4$ and a constant $K>0$ such that
\[K^{-1}\|C_\iii\|\leq \frac{\mu_C([\iii])}{e^{-|\iii|P(\Phi)}} \leq K \|C_\iii\|,\qquad K^{-1}\|D_\iii\|\leq \frac{\mu_D([\iii])}{e^{-|\iii|P(\Phi)}} \leq K \|D_\iii\|\]
for all $\iii \in \Sigma_4^*$, where we have used the fact that $P(\Phi)=P(\Phi_C)=P(\Phi_D)$. If the measures $\mu_C$, $\mu_D$ were identical then these two Gibbs inequalities together would imply
\[\lim_{n \to \infty} \left\|\left(C_0C_3\right)^n\right\|^{\frac{1}{n}}=\lim_{n \to \infty} \left\|\left(D_0D_3\right)^n\right\|^{\frac{1}{n}} \]
but the former limit is $16$ and the latter is $4$, so the two measures are distinct and $\Phi$ has two ergodic equilibrium measures which is the maximum permitted by Theorem~\ref{th:irr}.

More generally, given $k \geq 1$ we may proceed as follows. Let $\mathfrak{b}_i(j)$ denote the $i^{\mathrm{th}}$ binary digit of the integer $j \in \{0,\ldots,2^{k}-1\}$ starting from the least significant digit so that $j=\sum_{i=1}^k \mathfrak{b}_i(j)2^{i-1}$. Define for each $j=0,\ldots,2^k-1$ and $i=1,\ldots,k$
\[A^{(i)}_j=\left\{\begin{array}{cl}
  \begin{pmatrix}
  0&2\\1&0
      \end{pmatrix}&\text{if }\mathfrak{b}_i(j)=0,\\
        \begin{pmatrix}
  0&1\\2&0
      \end{pmatrix}&\text{if }\mathfrak{b}_i(j)=1\end{array}
\right.\]
and define $\Phi(\iii)=\prod_{i=1}^k\|A_\iii^{(i)}\|=\|\otimes_{i=1}^k A_\iii^{(i)}\|=\|B_\iii\|$, say, where $B_\iii \in GL(\otimes_{i=1}^k \mathbb{R}^2)$. In a suitable basis for $\otimes_{i=1}^k\mathbb{R}^2$ we may write each $B_j$ as a direct sum of $2^{k-1}$ matrices of dimension $2\times 2$, decomposing $(B_0,\ldots,B_{2^k-1})$ into $2^{k-1}$ irreducible $2^k$-tuples each of which has the same pressure and contributes a distinct equilibrium state. Each 2-dimensional invariant subspace which corresponds to a $2 \times 2$ block is spanned by a pair of vectors of the form $\otimes_{i=1}^k e_{\mathfrak{b}_i(j)}, \otimes_{i=1}^k e_{\mathfrak{b}_i(2^k-1-j)}$ for some $j \in \{0,\ldots,2^{k-1}-1\}$. When $j$ is fixed the limit
\[\lim_{n \to \infty} \left\|\left(B_jB_{2^k-j}\right)^n\Big|_{\mathrm{span}\left\{\otimes_{i=1}^k e_{\mathfrak{b}_i(\ell)},\otimes_{i=1}^k e_{\mathfrak{b}_i(2^k-1-\ell)}\right\}}\right\|^{\frac{1}{n}}\]
is maximised only when $\ell \in \{j,2^k-1-j\}$ which implies that each of these $2^{k-1}$ invariant subspaces contributes a distinct equilibrium state by analogous reasoning to the case $k=2$.  We leave further details to the reader. We also leave to the reader the problem of showing that the maximum number of ergodic equilibrium states in Theorem~\ref{th:force-diagonal} can be attained in suitable cases where all of the matrices are diagonal, by adapting the argument of \cite[Proposition 5.3]{KaMo16}.

The preceding example shows that the upper bound for the number of ergodic equilibrium states in Theorem~\ref{th:irr} is sharp in at least some cases, but it appears to be more difficult to construct examples with large numbers of ergodic equilibrium states when the integers $t_i$ are allowed to vary with $i$. We pose the following question:
\begin{question}
Does Theorem~\ref{th:irr} remain true if $p\coloneqq (\max_i t_i)^{-1}\prod_{i=1}^k t_i$ is replaced with $p'\coloneqq (\mathrm{lcm}(t_1,\ldots,t_k))^{-1}\prod_{i=1}^k t_i$? More generally, what is the smallest value of $p=p(t_1,\ldots,t_k)$ for which the statement of Theorem~\ref{th:irr} remains valid?
\end{question}

As was remarked in the introduction the bound ${d\choose \lfloor s \rfloor}{d \choose \lceil s \rceil}$ for the number of ergodic equilibrium states of Falconer's singular value function $\varphi^s$ is known not to be optimal in dimension three. We advance the following conjecture on the number of ergodic equilibrium states:
\begin{conjecture}\label{co:njecture}
Let $(A_1,\ldots,A_N)\in GL_d(\mathbb{R})$ and $s \in (0,d)\setminus \mathbb{Z}$. Then the maximum possible number of ergodic equilibrium states of $\varphi^s$ is precisely $(d-\lfloor s \rfloor){d \choose \lfloor s \rfloor}=\lceil s \rceil {d \choose \lceil s \rceil}$.
\end{conjecture}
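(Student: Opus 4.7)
The goal is to prove that the maximum number of ergodic equilibrium states of $\varphi^s$ over all choices of $A_1,\ldots,A_N \in GL_d(\mathbb{R})$ is exactly $(d-\lfloor s\rfloor)\binom{d}{\lfloor s\rfloor}$. The lower bound already follows from the examples in \cite{KaMo16} cited in the introduction, so my plan is entirely about improving the upper bound given by Theorem~\ref{th:intro2}. The core idea is to exploit the fact that the two factors of $\varphi^s$, namely $\|A_\iii^{\wedge\lfloor s\rfloor}\|^{\lceil s\rceil-s}$ and $\|A_\iii^{\wedge\lceil s\rceil}\|^{s-\lfloor s\rfloor}$, arise from a \emph{single} action of $(A_1,\ldots,A_N)$ on $\mathbb{R}^d$, which should force a combinatorial coupling between the block decompositions of $A_j^{\wedge\lfloor s\rfloor}$ and $A_j^{\wedge\lceil s\rceil}$.

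First, I would apply Theorem~\ref{th:force-diagonal} to $(A_1,\ldots,A_N)$ acting on $\mathbb{R}^d$ itself, obtaining an invariant filtration $0=U_0\subset U_1\subset\cdots\subset U_m=\mathbb{R}^d$ whose quotients $Q_r\coloneqq U_r/U_{r-1}$ of dimension $e_r$ carry irreducible actions. This filtration induces block-upper-triangular structures on both $\WEDGE^{\lfloor s\rfloor}\mathbb{R}^d$ and $\WEDGE^{\lceil s\rceil}\mathbb{R}^d$, whose graded pieces are indexed by compositions $\mathbf{k}=(k_1,\ldots,k_m)$ with $\sum k_r = \lfloor s\rfloor$ (resp.\ $\lceil s\rceil$) and $0\le k_r\le e_r$, with the $\mathbf{k}$-graded piece canonically isomorphic to $\bigotimes_r \WEDGE^{k_r} Q_r$ of dimension $D_\mathbf{k}\coloneqq\prod_r \binom{e_r}{k_r}$. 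Applying Theorem~\ref{th:force-diagonal} a second time, now to these exterior-power block decompositions, I obtain that every ergodic equilibrium state $\mu$ of $\varphi^s$ is an equilibrium state of some auxiliary potential $\Phi^{(\mathbf{k},\mathbf{k}')}(\iii)=\|A_\iii^{\wedge\mathbf{k}}\|^{\lceil s\rceil-s}\|A_\iii^{\wedge\mathbf{k}'}\|^{s-\lfloor s\rfloor}$ indexed by a pair of compositions.

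Second, and crucially, I would prove the \emph{compatibility lemma}: any such pair $(\mathbf{k},\mathbf{k}')$ giving rise to an equilibrium state must satisfy $k_r\le k'_r$ for all $r$, so that $\mathbf{k}'=\mathbf{k}+\mathsf{e}_j$ for some index $j$ with $k_j<e_j$. The reason is that for an ergodic $\mu$, the asymptotic growth rate of $\|A_\iii^{\wedge k}\|$ is the sum of the top $k$ Lyapunov exponents of $\mu$ relative to the filtration $U_\bullet$; the top $\lfloor s\rfloor$ exponents are automatically contained in the top $\lceil s\rceil$, so the dominant blocks of the two exterior powers come from a nested pair of subspaces of $\mathbb{R}^d$. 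Once this is established, I would bound the number of equilibrium states contributed by each compatible pair using Theorem~\ref{th:irr}, which in the worst case gives $\min(D_\mathbf{k},D_{\mathbf{k}'})$. A short computation using $D_{\mathbf{k}+\mathsf{e}_j}/D_\mathbf{k} = (e_j-k_j)/(k_j+1)$ yields $\min(D_\mathbf{k},D_{\mathbf{k}+\mathsf{e}_j})\le (e_j-k_j)\,D_\mathbf{k}$, so the total count is bounded by
\[
\sum_{\substack{\mathbf{k},\,j:\,k_j<e_j\\ \sum_r k_r=\lfloor s\rfloor}} (e_j-k_j)\,D_\mathbf{k} \;=\; (d-\lfloor s\rfloor)\binom{d}{\lfloor s\rfloor},
\]
the identity being proved by double-counting through the finer refinements $(I,J)$ with $I\subset J\subseteq \{1,\ldots,d\}$, $|I|=\lfloor s\rfloor$, $|J|=\lceil s\rceil$.

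The main obstacle I expect is making the compatibility lemma rigorous beyond the generic setting. When Lyapunov exponents coincide, when the irreducible quotients $Q_r$ fail to be strongly irreducible, or when a tensor product $\bigotimes_r (A_j^{(r)})^{\wedge k_r}$ itself decomposes further, one risks producing equilibrium states whose dominant-block pattern is not of the form $(\mathbf{k},\mathbf{k}+\mathsf{e}_j)$. Overcoming this will likely require a pressure-comparison argument showing that any non-compatible pair $(\mathbf{k},\mathbf{k}')$ gives a potential $\Phi^{(\mathbf{k},\mathbf{k}')}$ with pressure strictly less than $P(\varphi^s)$, achieved via interpolation between $\mathbf{k}$ and $\mathbf{k}'$ along a path of compositions in the compatibility order, combined with log-convexity properties of subadditive pressure. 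Implementing this cleanly in full generality—handling all degeneracies simultaneously—is the genuine technical hurdle, and is presumably what keeps the conjecture open.
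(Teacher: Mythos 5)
This statement is a \emph{conjecture} in the paper, not a theorem: the authors explicitly note that Theorem~\ref{th:intro2} settles it only when $0<s<1$, $d-1<s<d$, or $s\in(0,d)\cap\mathbb{Z}$, and that the general case remains open. Your proposal is a thoughtful roadmap in the right spirit --- and indeed echoes the authors' own remark at the end of \S\ref{se:seven} that sharp bounds ought to exploit the coupling between $A_\iii^{\wedge\lfloor s\rfloor}$ and $A_\iii^{\wedge\lceil s\rceil}$ via partial flags rather than treating the two exterior powers independently --- but it does not close the gap, as you yourself acknowledge in your final paragraph.

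Two specific problems stand the proposal down from a proof. First, the compatibility lemma. Theorem~\ref{th:force-diagonal} gives you, for each ergodic equilibrium state $\mu$, \emph{some} pair of diagonal blocks of $\WEDGE^{\lfloor s\rfloor}\mathbb{R}^d$ and $\WEDGE^{\lceil s\rceil}\mathbb{R}^d$ realising the pressure, but the block filtrations of the two exterior powers guaranteed by that theorem are produced independently; there is no a priori reason they align with (or even refine) the graded filtration induced by a single filtration $U_\bullet$ of $\mathbb{R}^d$. Your Lyapunov-exponent heuristic (``the top $\lfloor s\rfloor$ exponents sit inside the top $\lceil s\rceil$'') is compelling generically, but when exponents tie across blocks the dominant graded piece is not unique, and you would need to show that among the possibly many dominant pairs at least one is of the nested form $(\mathbf{k},\mathbf{k}+\mathsf{e}_j)$. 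That requires an argument, not a remark. Second, the per-pair count. You invoke Theorem~\ref{th:irr} to bound the contribution of each compatible pair by $\min(D_\mathbf{k},D_{\mathbf{k}'})$, but Theorem~\ref{th:irr} requires irreducibility of the two representations, and the graded piece $\bigotimes_r \WEDGE^{k_r} Q_r$ need not be irreducible even when the $Q_r$ are. Falling back to Theorem~\ref{th:force-diagonal} instead gives a bound of the form $(\min_i n_i/d_i)\prod_i d_i$, which is \emph{not} in general bounded by $\min(d_1,d_2)$ once both factors are reducible (take $d_1=3,d_2=2,n_1=2,n_2=1$ to see the bound become $3>2$). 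So the combinatorial identity you prove does not apply to the quantity you actually control, and the estimate degrades precisely in the reducible cases --- which, as the authors note, are exactly the cases left open by Theorem~\ref{th:force-diagonal}.

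In short: the paper deliberately does not claim this result, and your outline identifies --- accurately --- the two genuine obstructions (the compatibility lemma in the degenerate/tied case, and controlling the multiplicity inside a reducible graded tensor block). Overcoming them would indeed settle Conjecture~\ref{co:njecture}, but your proposal does not yet do so.
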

Theorem \ref{th:intro2} resolves this conjecture positively when $0<s<1$, when $d-1<s<d$ or when $s \in (0,d)\cap\mathbb{Z}$. If $s \in (1,d-1)\setminus \mathbb{Z}$ and $(A_1^{\wedge \lfloor s \rfloor},\ldots,A_N^{\wedge \lfloor s \rfloor})$ is irreducible then the number of ergodic equilibrium states is bounded by ${d \choose \lceil s \rceil}$ as a consequence of Theorem \ref{th:force-diagonal}, and similarly if $(A_1^{\wedge \lceil s \rceil},\ldots,A_N^{\wedge \lceil s \rceil})$ is irreducible then it is bounded by ${d \choose \lfloor s \rfloor}$. The unresolved cases therefore occur when $s \in (1,d-1)\setminus \mathbb{Z}$ and both $(A_1^{\wedge \lfloor s \rfloor},\ldots,A_N^{\wedge \lfloor s \rfloor})$ and $(A_1^{\wedge \lceil s \rceil},\ldots,A_N^{\wedge \lceil s \rceil})$ are reducible. We also ask:
\begin{question}\label{qu:maxbound}
Let $(A_1,\ldots,A_N)\in GL_d(\mathbb{R})$ and $s \in (0,d)\setminus \mathbb{Z}$. If $(A_1,\ldots,A_N)$ has at least $(d-\lfloor s \rfloor){d \choose \lfloor s \rfloor}$ ergodic equilibrium states with respect to $\varphi^s$, does it follow that $(A_1,\ldots,A_N)$ is simultaneously triangularisable?
\end{question}
Since it was shown in \cite[Theorem~D]{KaMo16} that the maximum possible number of ergodic equilibrium states is $(d-\lfloor s \rfloor){d \choose \lfloor s \rfloor}$ when the matrices $A_1,\ldots,A_N$ are simultaneously triangularisable, a positive answer to Question~\ref{qu:maxbound} would resolve Conjecture \ref{co:njecture}. A positive answer to Question~\ref{qu:maxbound} in the case $d=2$ follows directly from Theorem \ref{th:force-diagonal}, and we are able to give a positive answer for $d=3$ and $d=4$ by a somewhat laborious case-by-case analysis. In dimensions higher than four the question remains open.

We remark that the upper bound for the number of ergodic equilibrium states given by Theorem \ref{th:intro2} was derived by disregarding the relationship between the tuples $(A_1^{\wedge \lfloor s \rfloor},\ldots,A_N^{\wedge \lfloor s \rfloor})$ and $(A_1^{\wedge \lceil s \rceil},\ldots,A_N^{\wedge \lceil s \rceil})$ and instead treating the two tuples as if they were entirely unrelated. In order to obtain sharp upper bounds for the number of ergodic equilibrium states it seems intuitively reasonable that the relationship between the two tuples should be exploited in some way, perhaps by the use of partial flags of $\lfloor s \rfloor$- and $\lceil s \rceil$-dimensional subspaces of $\mathbb{R}^d$ instead of pairings between subspaces of $\WEDGE^{\lfloor s \rfloor}\mathbb{R}^d$ and $\WEDGE^{\lceil s \rceil}\mathbb{R}^d$.

\section{Extensions of Corollary \ref{co:xsapples}}\label{se:mon}

Let us say that an affine iterated function system $(T_1,\ldots,T_N)$ has the \emph{strict monotonicity property for Hausdorff dimension} if for every affine IFS $(T_1',\ldots,T_{N-1}')$ formed by deleting one of the contractions $T_i$ and retaining the rest, the attractor of $(T_1',\ldots,T_{N-1}')$ has strictly smaller Hausdorff dimension than the attractor of $(T_1,\ldots,T_N)$. (Here and throughout this section we of course assume $N \geq 2$.) Corollary \ref{co:xsapples} demonstrates that if the contractions $T_i$ are invertible and the affinity dimension of $(T_1,\ldots,T_N)$ equals the Hausdorff dimension of its attractor, then $(T_1,\ldots,T_N)$ has the strict monotonicity property for the Hausdorff dimension. However this condition is not necessary for the strict monotonicity property to hold, since it is easy to see that the strict monotonicity property is also satisfied for the Bedford-McMullen carpets studied in \cite{Be84,Mc84} whose Hausdorff dimension is smaller than their affinity dimension. It is therefore natural to ask for more general conditions under which the strict monotonicity property for the Hausdorff dimension holds.

We recall that an iterated function system $(T_1,\ldots,T_N)$ acting on $\mathbb{R}^d$ is said to satisfy the Open Set Condition if there exists a nonempty open $U \subset \mathbb{R}^d$ such that the images $T_1U,\ldots,T_NU$ are pairwise disjoint subsets of $U$. We say that $(T_1,\ldots,T_N)$ satisfies the \emph{Strong Open Set Condition} if additionally $U$ intersects the attractor of $(T_1,\ldots,T_N)$, or equivalently if there exists $\mathtt{i} \in \Sigma_N^*$ such that $\overline{T_{\mathtt{i}}U}\subset U$. The Open Set Condition cannot be sufficient for the strict monotonicity property for the Hausdorff dimension to hold, since an example of G. A. Edgar \cite[Example 1]{Ed92} shows that an affine iterated function system defined by invertible affinities can satisfy the Open Set Condition but have a singleton set as its attractor. On the other hand the attractor cannot be a singleton set when the Strong Open Set Condition holds. We note the following question which has been attributed to J. Schmeling:
\begin{question}
Let $(T_1,\ldots,T_N)$ be an affine iterated function system acting on $\mathbb{R}^d$ which satisfies the Strong Open Set Condition and such that every $T_i$ is invertible. Does $(T_1,\ldots,T_N)$ satisfy the strict monotonicity property for the Hausdorff dimension?
\end{question}
One may of course also define and investigate the strict monotonicity property for other notions of dimension, such as the box dimension and packing dimension.


\bigskip
\begin{ack}
The authors would like to thank De-Jun Feng, Rafael Potrie, and Pablo Shmerkin for helpful discussions and suggestions. The authors also thank the referee for their careful reading of the paper.
\end{ack}

\bibliographystyle{acm}
\bibliography{falconer-biblio}
\end{document}